\newtheorem{lemma}{Lemma}
\newtheorem{theorem}{Theorem}
\newtheorem{corollary}{Corollary}
\newtheorem{remark}{Remark}
\numberwithin{equation}{section}
\numberwithin{definition}{section}
\numberwithin{lemma}{section}
\numberwithin{theorem}{section}
\numberwithin{corollary}{section}
\numberwithin{example}{section}
\title{Differential Subordinations for Starlike Functions Associated With A Nephroid Domain}
\author{Lateef Ahmad Wani$^\dagger$}
   \address{$^\dagger$Department of Mathematics\\ Indian Institute of Technology, Roorkee-247667, Uttarakhand, India}
   \email{lateef17304@gmail.com}
\author{A. Swaminathan$^\ddagger$}
   \address{$^\ddagger$Department of Mathematics\\ Indian Institute of Technology, Roorkee-247667, Uttarakhand, India}
   \email{mathswami@gmail.com, a.swaminathan@ma.iitr.ac.in}
\begin{document}

\begin{abstract}
Let $\mathcal{A}$ be the set of all analytic functions $f$ defined in the open unit disk $\mathbb{D}$ and satisfying $f(0)=f'(0)-1=0$. In this paper, we consider the function $\varphi_{\scriptscriptstyle {Ne}}(z):=1+z-z^3/3$, which maps the unit circle $\{z:|z|=1\}$ onto a $2$-cusped curve called nephroid given by
$\left((u-1)^2+v^2-\frac{4}{9}\right)^3-\frac{4 v^2}{3}=0$, and the function class $\mathcal{S}^*_{Ne}$ defined as
\begin{align*}
\mathcal{S}^*_{Ne}:=\left\{f\in\mathcal{A}:\frac{zf'(z)}{f(z)}\prec\varphi_{\scriptscriptstyle {Ne}}(z)\right\},
\end{align*}
where $\prec$ denotes subordination. We obtain sharp estimates on $\beta\in\mathbb{R}$ so that the first-order differential subordination
\begin{align*}
1+\beta\frac{zp'(z)}{p^j(z)}\prec\mathcal{P}(z), \quad j=0,1,2
\end{align*}
implies $p\prec\varphi_{\scriptscriptstyle{Ne}}$, where $\mathcal{P}(z)$ is certain Carath\'{e}odory function with nice geometrical properties and $p(z)$ is analytic satisfying $p(0)=1$. Moreover, we use properties of Gaussian hypergeometric function in order to get the subordination $p\prec\varphi_{\scriptscriptstyle{Ne}}$ whenever $p(z)+\beta zp'(z)\prec\sqrt{1+z}$ or $1+z$. As applications, we establish sufficient conditions for $f\in\mathcal{A}$ to be in the class $\mathcal{S}^*_{Ne}$.  	
\end{abstract}

\subjclass[2010] {30C45, 30C80}
\keywords{Carath\'{e}odory function, Differential Subordination, Starlike function, Lemniscate of Bernoulli, Cardioid, Nephroid}

\maketitle

\markboth{Lateef Ahmad Wani and A. Swaminathan}{Differential subordinations for starlike functions associated with a nephroid domain}


\section{Introduction}
Let $\mathcal{H}$ be the collection of all analytic functions defined on the open unit disk $\mathbb{D}:=\left\{z\in\mathbb{C}:|z|<1\right\}$, where $\mathbb{C}$ denotes the complex plane. A function $f\in\mathcal{H}$ satisfying $f(0)=1$ and $\mathrm{Re}\left(f(z)\right)>0$ for every $z\in\mathbb{D}$ is called a Carath\'{e}odory function. Let
 $\mathcal{A}$ be the totality of analytic functions $f\in\mathcal{H}$ satisfying the normalization conditions $f(0)=0$ and $f'(0)=1$. Obviously, each function $f\in\mathcal{A}$ is of the form $f(z)=z+\sum_{n=2}^{\infty}a_nz^n,\,a_n\in\mathbb{C}$.
 Let $\mathcal{S}\subset\mathcal{A}$ denote the family of one--one (univalent) functions defined on $\mathbb{D}$. Further, let $\mathcal{S}^*$ and $\mathcal{C}$ be, respectively, the well-known classes of starlike and convex functions. The functions in $\mathcal{S}^*$ (or $\mathcal{C}$) are analytically characterized by the condition that for each $z\in\mathbb{D}$, the quantity $zf'(z)/f(z)$ (or $1+zf''(z)/f'(z)$) lies in the interior of the half-plane $\mathrm{Re}(w)>0$. Let $f,g\in\mathcal{H}$, by $f\prec g$ we mean $f$ is {\it subordinate} to $g$, which implies $f(z)=g(w(z))$ whenever there exists a function $w\in\mathcal{H}$ satisfying $w(0)=0$ and $|w(z)|<1$ for $z\in\mathbb{D}$. If $f\prec g$, then $f(0)=g(0)$ and $f(\mathbb{D})\subset g(\mathbb{D})$. Moreover, if the function $g(z)$ is univalent, then the concept $f\prec g$ and the property $f(0)=g(0)$ with $f(\mathbb{D})\subset g(\mathbb{D})$ are equivalent. Let $\Psi:\mathbb{C}^2\times\mathbb{D}\to\mathbb{C}$ be a complex analytic function, and let $u\in\mathcal{H}$ be univalent. A function $p\in\mathcal{H}$ is said to satisfy the first-order differential subordination if
\begin{align}\label{Def-Diff-Subord-Psi}
\Psi(p(z),\,zp'(z);\,z)\prec u(z), \qquad z\in\mathbb{D}.
\end{align}
 If $q:\mathbb{D}\to\mathbb{C}$ is univalent and $p{\prec}q$ for all $p$ satisfying \eqref{Def-Diff-Subord-Psi}, then $q$ is said to be a dominant of the differential subordination \eqref{Def-Diff-Subord-Psi}. A dominant $\tilde{q}$ that satisfies $\tilde{q}\prec{q}$ for all dominants $q$ of \eqref{Def-Diff-Subord-Psi} is called the best dominant of \eqref{Def-Diff-Subord-Psi}. If $\tilde{q}_1$ and $\tilde{q}_2$ are two best dominants of \eqref{Def-Diff-Subord-Psi}, then $\tilde{q}_2(z)=\tilde{q}_1(e^{i\theta}z)$ for some $\theta\in\mathbb{R}$ i.e., the best dominant is unique up to the rotations of $\mathbb{D}$. For further details related to differential subordinations, we refer to the monograph of Miller and Mocanu \cite{Miller-Mocanu-Book-2000-Diff-Sub} (see also \cite{Bulboaca-2005-Diff-Sub-Book}).

For $p\in\mathcal{H}$ satisfying $p(0)=1$, Nunokawa et al. \cite{Nunokawa-Owa-1989-One-criterion-PAMS} verified that the subordination $1+zp'(z)\prec1+z$ implies $p(z)\prec1+z$. As a consequence, they \cite{Nunokawa-Owa-1989-One-criterion-PAMS} gave a criterion for a normalized analytic function to be univalent in $\mathbb{D}$. In 2007, Ali et al. \cite{Ali-Ravi-2007-Janowski-Starlikeness-IJMMS} replaced $1+z$ by $(1+Dz)/(1+Ez)$ and obtained the conditions (non-sharp) on the parameter $\beta\in\mathbb{R}$ in terms of $A,B,D,E\in[-1,1]\;(B<A \text{ and } E<D)$ so that the following subordination implication holds:
$$1+\beta\frac{zp'(z)}{p^j(z)}\prec\frac{1+Dz}{1+Ez} \implies p(z)\prec\frac{1+Az}{1+Bz}, \quad j=0,1,2.$$
 As a result, certain sufficient conditions for Janowski starlikeness were established. In 2012, Ali et al. \cite{Ali-Ravi-2012-Diff-Sub-LoB-TaiwanJM} determined the conditions on the real $\beta$ so that the subordination $p(z)\prec\sqrt{1+z}$ holds true whenever the subordination $1+\beta{zp'(z)/p^j(z)}\prec\sqrt{1+z}\;(j=0,1,2)$ holds. In 2013, Kumar et al. \cite{Kumar-Ravi-2013-Suff-Conditions-LoB-JIA} gave non-sharp bounds for $\beta\in\mathbb{R}$ such that $1+\beta{zp'(z)/p^j(z)}\prec(1+Dz)/(1+Ez)$ implies $p(z)\prec\sqrt{1+z}$, where $D,E\in\mathbb{R}$ with $-1<E<D\leq1$. Later, Omar and Halim \cite{Omar-Halim-2013-Diff-Sub-Sokol-Stankiewicz-Class} studied this problem of Kumar et al. \cite{Kumar-Ravi-2013-Suff-Conditions-LoB-JIA} for $D\in\mathbb{C}$ with $|D|\leq1$ and $-1<E<1$.
The subordination results proved in \cite{Ali-Ravi-2012-Diff-Sub-LoB-TaiwanJM,Omar-Halim-2013-Diff-Sub-Sokol-Stankiewicz-Class, Kumar-Ravi-2013-Suff-Conditions-LoB-JIA} provide sufficient conditions for $f\in\mathcal{A}$ to be in the starlike class $\mathcal{S}^*_L$ of functions associated with the leminiscate of Bernoulli introduced by Sok\'{o}{\l} and Stankiewicz \cite{Sokol-J.Stankwz-1996-Lem-of-Ber}. In 2018, Kumar and Ravichandran \cite{SushilKumar-Ravi-2018-Sub-Positive-RP-CAOT} determined {\it sharp} estimates on the real $\beta$ in order that the subordination
$1+\beta{zp'(z)/p^j(z)}\prec\mathcal{P}(z)$ ensures $p(z)\prec{e^z},\,(1+Az)/(1+Bz)$ for a handful of Carath\'{e}odory functions $\mathcal{P}(z)$ with interesting geometries.
%
%
The results proved in \cite{SushilKumar-Ravi-2018-Sub-Positive-RP-CAOT} yield, in particular, certain conditions that are sufficient for $f\in\mathcal{A}$ to belong to the function class $\mathcal{S}^*_e$ related to the exponential function $e^z$ introduced by Mendiratta et al. \cite{Mendiratta-Ravi-2015-Expo-BMMS}.
Recently, Ahuja et al. \cite{Ahuja-Ravi-2018-App-Diff-Sub-Stud-Babe-Bolyai} obtained sharp bounds on $\beta$ so that the differential subordination $1+\beta{zp'(z)/p^j(z)}\prec\sqrt{1+z}\;(j=0,1,2)$ implies $p(z)\prec: \sqrt{1+z},\, 1+\sin{z},\, 1+4z/3+2z^2/3,\, z+\sqrt{1+z^2},\, (1+Az)/(1+Bz)$, where $-1<B<A<1$. Similar problems of subordination implications have been studied in
 \cite{Bohra-Ravi-2019-Diff-Subord-Hacet.J,
 	Cho-Ravi-2018-Diff-Sub-Booth-Lem-TJM,
 	Sokol-2007-Subord-AMC}.\\

Motivated by the aforesaid literature, in this paper, we consider the Carath\'{e}odory function $\varphi_{\scriptscriptstyle{Ne}}:\mathbb{D}\to\mathbb{C}$ defined as
$\varphi_{\scriptscriptstyle{Ne}}(z):=1+z-z^3/3$,
and the Ma-Minda type (see \cite{Ma-Minda-1992-A-unified-treatment}) function class $\mathcal{S}^*_{Ne}$ associated with it given by
\begin{align*}
\mathcal{S}^*_{Ne}:=\left\{f\in\mathcal{A}:\frac{zf'(z)}{f(z)}\prec\varphi_{\scriptscriptstyle {Ne}}(z)\right\}.
\end{align*}
Our problem is to determine sharp estimates on $\beta\in\mathbb{R}$ so that the first-order differential subordination
\begin{align}\label{Subordination-Problem-Main-SRN}
1+\beta\frac{zp'(z)}{p^j(z)}\prec\mathcal{P}(z), \quad z\in\mathbb{D}, \quad j\in\{0,1,2\}
\end{align}
implies $p(z)\prec\varphi_{\scriptscriptstyle{Ne}}(z)$, where $\mathcal{P}:\mathbb{D}\to\mathbb{C}$ is some analytic function with positive real part and has certain nice geometric properties. Furthermore, the starlike properties of the classical hypergeometric function ${_2F_1}$ are used to find the sharp bound on $\beta\in\mathbb{R}$ such that the differential subordination
\begin{align*}
p(z)+\beta{zp'(z)}\prec\sqrt{1+z},\, \text{ or }\, 1+z, \qquad z\in\mathbb{D},
\end{align*}
implies $p(z)\prec\varphi_{\scriptscriptstyle{Ne}}(z)$. All these results in turn yield conditions that sufficiently ensure that the function $f\in\mathcal{A}$ is a member of the function class $\mathcal{S}^*_{Ne}$.

The function $\varphi_{\scriptscriptstyle{Ne}}(z)$ and the associated class $\mathcal{S}^*_{Ne}$ were recently introduced in \cite{Wani-Swami-Nephroid-Basic,Wani-Swami-Radius-Problems-Nephroid-RACSAM}. It was proved \cite{Wani-Swami-Nephroid-Basic} that the function $\varphi_{\scriptscriptstyle{Ne}}(z)$ maps the boundary  $\partial\mathbb{D}$ of the unit disk $\mathbb{D}$ univalently onto the nephroid, a $2$--cusped kidney--shaped curve, given by
\begin{align}\label{Equation-of-Nephroid}
\left((u-1)^2+v^2-\frac{4}{9}\right)^3-\frac{4 v^2}{3}=0.
\end{align}
Indeed, for $-\pi<t\leq\pi$, we have
\begin{align*}
u+iv=\varphi_{\scriptscriptstyle {Ne}}(e^{it})=1+\cos{t}-(\cos{3t})/3+i\left(\sin{t}-(\sin{3t})/3\right),
\end{align*}
which on separating real and imaginary parts gives
\begin{align*}
(u-1)^2+v^2=\frac{10}{9}-\frac{2}{3}\cos{2t}
               =\frac{4}{9}+\left(\frac{4}{3}\left(\frac{4}{3}\sin^3{t}\right)^2\right)^\frac{1}{3}
                 =\frac{4}{9}+\left(\frac{4}{3}v^2\right)^\frac{1}{3}
\end{align*}
and yields the equation \eqref{Equation-of-Nephroid}. Geometrically, a nephroid is the locus of a point fixed on the circumference of a circle of radius $\rho$ that rolls (without slipping) on the outside of a fixed circle having radius $2\rho$.
First studied by Huygens and Tschirnhausen in 1697, the nephroid curve was shown to be the catacaustic (envelope of rays emanating from a specified point) of a circle when the light source is at infinity. In 1692, Jakob Bernoulli had shown that the nephroid is the catacaustic of a cardioid for a luminous cusp. However, the word nephroid was first used by Richard A. Proctor in 1878 in his book ``The Geometry of Cycloids". For further details related to the nephroid curve, we refer to \cite{Lockwood-Book-of-Curves-2007, Yates-1947-Handbook-Curves}.
\vspace{-1em}
\begin{figure}[H]
	\includegraphics[scale=0.7]{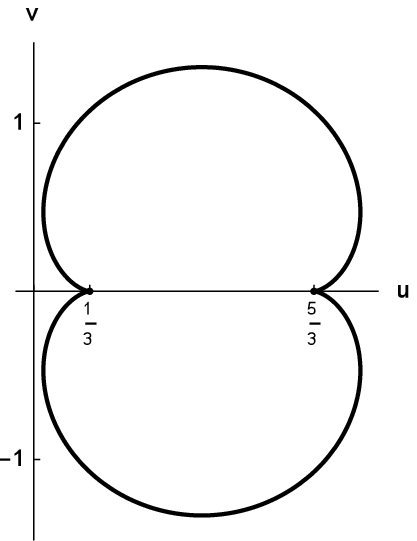}
	\caption{Nephroid: The Boundary curve of
		$\varphi_{\scriptscriptstyle{Ne}}(\mathbb{D})$.}
	\label{Figure-Nephroid}	
\end{figure}
The structure of rest of the paper is as follows. In \Cref{Section-Lemniscate-Regions-SRN}, the Carath\'{e}odory function $\mathcal{P}(z)$ in \eqref{Subordination-Problem-Main-SRN} is taken to be a certain function which maps $\mathbb{D}$ onto a convex domain, while in \Cref{Section-Non-Convex-Cusped-Regions-SRN}, the region $\mathcal{P}(\mathbb{D})$ is considered as non-convex, either cusped or dimpled.  
%
%
In \Cref{Section-Janowski-Functions-SRN},
the implications of \eqref{Subordination-Problem-Main-SRN} are provided for $\mathcal{P}(z)$ being the Janowski class of functions of the form $(1+Az)/(1+Bz),\,-1\leq B<A\leq1$. The properties of Gaussian hypergeometric functions are used in \Cref{Section-GHGFs-SRN} to discuss the outcome of the differential subordinations of the form $p(z)+\beta zp'(z)\prec\sqrt{1+z}$, or $1+z$. 

To prove the results, an extensive use of the following lemma is made.
\begin{lemma}[{\cite[Theorem 3.4h, p. 132]{Miller-Mocanu-Book-2000-Diff-Sub}}]\label{Lemma-3.4h-p132-Miller-Mocanu}
	Let $q:\mathbb{D}\to\mathbb{C}$ be univalent, and let $\lambda$ and $\vartheta$ be analytic in a domain $\Omega\supseteq q(\mathbb{D})$ with $\lambda(\xi)\neq0$ whenever $\xi\in{q(\mathbb{D})}$. Define
	\begin{align*}
	\Theta(z):=zq'(z)\,\lambda(q(z)) \quad \text{ and } \quad  h(z):=\vartheta(q(z))+\Theta(z), \qquad z\in\mathbb{D}.
	\end{align*}
	Suppose that either
	\begin{enumerate}[\rm(i)]
		\item $h(z)$ is convex, or
		\item $\Theta(z)$ is starlike.\\
		In addition, assume that
		\item $\mathrm{Re}\left({zh'(z)}/{\Theta(z)}\right)>0$ in $\mathbb{D}$.
	\end{enumerate}
	If $p\in\mathcal{H}$ with $p(0)=q(0)$, $p(\mathbb{D})\subset{\Omega}$ and
	\begin{align*}
	\vartheta(p(z))+zp'(z)\,\lambda(p(z))\prec\vartheta(q(z))+zq'(z)\,\lambda(q(z)), \qquad z\in\mathbb{D},
	\end{align*}
	then $p\prec{q}$, and $q$ is the best dominant.
\end{lemma}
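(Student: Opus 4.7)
The plan is to argue by contradiction using the classical boundary-point technique for differential subordinations. Suppose $p\not\prec q$. Since $p(0)=q(0)$, $q$ is univalent and $p(\mathbb{D})\subset\Omega$, the fundamental boundary lemma of Miller--Mocanu furnishes points $z_0\in\mathbb{D}$ and $\zeta_0\in\partial\mathbb{D}$ (at which the non-tangential limit of $q$ and the value $q'(\zeta_0)$ exist and are finite) together with a real number $m\geq1$ such that
$$p(z_0)=q(\zeta_0),\qquad z_0p'(z_0)=m\,\zeta_0\,q'(\zeta_0).$$
Plugging these two identities into the left-hand side of the subordination at $z=z_0$ gives
$$\vartheta(p(z_0))+z_0p'(z_0)\,\lambda(p(z_0))=\vartheta(q(\zeta_0))+m\,\zeta_0q'(\zeta_0)\lambda(q(\zeta_0))=\vartheta(q(\zeta_0))+m\,\Theta(\zeta_0).$$
The subordination hypothesis forces this value to lie in $h(\mathbb{D})$. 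Thus the entire task reduces to proving that
$$H(\zeta_0,m):=\vartheta(q(\zeta_0))+m\,\Theta(\zeta_0)\notin h(\mathbb{D})\quad\text{for all } |\zeta_0|=1,\ m\geq 1,$$
which will contradict what we just derived.

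For $m=1$, $H(\zeta_0,1)=h(\zeta_0)$ is a boundary point of $h(\mathbb{D})$, so it suffices to show that the ray $\{h(\zeta_0)+(m-1)\Theta(\zeta_0):m\geq 1\}$ moves strictly away from $\overline{h(\mathbb{D})}$. This is precisely the geometric content of assumption (iii): $\mathrm{Re}(zh'(z)/\Theta(z))>0$ says that the outward tangent vector $zh'(z)$ to $\partial h(\mathbb{D})$ and the direction $\Theta(z)$ lie in a common open half-plane, so the displacement $(m-1)\Theta(\zeta_0)$ points to the exterior side of the boundary at $h(\zeta_0)$. Under alternative (i), convexity of $h$ means each tangent line at a boundary point is a global support line, and condition (iii) places the entire ray in the complementary open half-plane, hence outside $h(\mathbb{D})$. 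Under alternative (ii), starlikeness of $\Theta$ combined with (iii) implies that $h=\vartheta\circ q + \Theta$ is close-to-convex, and an elementary argument in the $\Theta$-coordinate again confines the ray to the exterior.

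The main technical obstacle is the boundary behaviour of $q$, since $q$ need not extend continuously (let alone with a bounded derivative) to $\partial\mathbb{D}$. The standard remedy is to run the argument with the dilations $q_\rho(z):=q(\rho z)$ and $p_\rho(z):=p(\rho z)$ for $\rho\uparrow 1$: each $q_\rho$ is analytic on $\overline{\mathbb{D}}$, the analogous conditions hold by continuity, and so $p_\rho\prec q_\rho$ by the boundary-point argument above. A normal-family/Hurwitz passage to the limit then yields $p\prec q$. Uniqueness of the best dominant up to rotation is the usual consequence: if two univalent best dominants each dominate the other they must coincide modulo an automorphism of $\mathbb{D}$ fixing the origin.
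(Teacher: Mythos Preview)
The paper does not prove this lemma at all: it is quoted verbatim from Miller and Mocanu's monograph \cite{Miller-Mocanu-Book-2000-Diff-Sub} and used as a black box throughout. There is therefore no ``paper's own proof'' to compare your attempt against.

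That said, your sketch follows the standard boundary-point strategy that Miller and Mocanu themselves use, so in spirit you are reproducing the original argument. A couple of places deserve more care before one could call this a complete proof. First, your claim that condition (iii) forces the ray $m\mapsto h(\zeta_0)+(m-1)\Theta(\zeta_0)$ to exit $h(\mathbb{D})$ is asserted geometrically but not justified analytically: in the convex case one must actually check that $\mathrm{Re}\bigl(\zeta_0 h'(\zeta_0)\big/\Theta(\zeta_0)\bigr)\geq 0$ at the boundary point (a limit of the interior condition) and then combine this with the support-line characterisation of convexity; in the starlike case your phrase ``an elementary argument in the $\Theta$-coordinate'' hides the real work, which in Miller--Mocanu is carried out by first showing $h$ is close-to-convex and then invoking an auxiliary subordination lemma rather than a direct ray argument. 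Second, the dilation-and-limit step you describe is correct in outline, but passing the subordination $p_\rho\prec q_\rho$ to the limit requires more than Hurwitz alone; one needs the characterisation $p\prec q\iff p(0)=q(0)$ and $p(\mathbb{D})\subset q(\mathbb{D})$ for univalent $q$, together with a kernel-convergence or containment argument for the increasing family $q_\rho(\mathbb{D})$.
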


In the sequel, it is always assumed that $z\in\mathbb{D}$ unless stated otherwise.


\section{Subordination Results Related to Convex Domains}\label{Section-Lemniscate-Regions-SRN}
This section has been divided into two subsections. In the first one, $\mathcal{P}(\mathbb{D})$ is a lemniscate type convex domain and, in the second one, $\mathcal{P}(\mathbb{D})$ is almost circular.
\smallskip
\subsection{Lemniscate Type Domains} ~\\
Two different cases of $\mathcal{P}(z)$ are considered in this subsection. In the first theorem $\mathcal{P}(z):=\varphi_{\scriptscriptstyle{L}}(z)=\sqrt{1+z}$, the function which maps $\mathbb{D}$ onto the interior of the right-half of lemniscate of Bernoulli $\left(u^2+v^2\right)^2-2\left(u^2-v^2\right)=0$ (\Cref{Figure-LemB-SRN}), while in second theorem $\mathcal{P}(z):=\varphi_{\scriptscriptstyle{RL}}(z)=
\sqrt{2}-(\sqrt{2}-1)\sqrt{{(1-z)}/{((2\sqrt{2}-2)z+1)}}
$, 
the mapping of $\mathbb{D}$ onto the inside of the left-half of the shifted lemniscate of Bernoulli $\left((u-\sqrt{2})^2+v^2\right)^2-2\left((u-\sqrt{2})^2-v^2\right)=0$ (\Cref{Figure-SLemB-SRN}). The functions $\varphi_{\scriptscriptstyle{L}}(z)$ and $\varphi_{\scriptscriptstyle{RL}}(z)$ were introduced in \cite{Sokol-J.Stankwz-1996-Lem-of-Ber} and \cite{Mendiratta-2014-Shifted-Lemn-Bernoulli}, respectively.
\vspace{-1em}
\begin{figure}[H]	
	\begin{subfigure}{0.45\textwidth}
		\centering
		\includegraphics{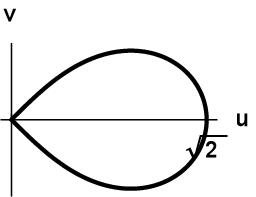}
		\caption{Boundary of $\varphi_{\scriptscriptstyle{L}}(\mathbb{D})$.}
		\label{Figure-LemB-SRN}
	\end{subfigure}
	\begin{subfigure}{0.45\textwidth}
		\centering
		\includegraphics{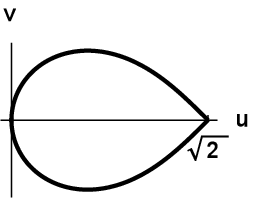}
		\caption{Boundary of $\varphi_{\scriptscriptstyle{RL}}(\mathbb{D})$.} 
		\label{Figure-SLemB-SRN}
	\end{subfigure}
	\caption{}
	\label{Figure-Small-Lemniscates}	
\end{figure}
 \vspace{-1em}
\begin{theorem}\label{Thrm-LemB-Impl-Neph-SRN}
Let $p\in\mathcal{H}$ satisfies $p(0)=1$, and let $\varphi_{\scriptscriptstyle{L}}(z):=\sqrt{1+z}$,
where the branch of the square root is chosen in order that $\varphi_{\scriptscriptstyle{L}}(0)=1$. Then each of the following subordinations imply $p(z)\prec\varphi_{\scriptscriptstyle {Ne}}(z):=1+z-z^3/3$.
\begin{enumerate}[\rm(a)]
\item
$1+\beta{zp'(z)}\prec\varphi_{\scriptscriptstyle{L}}(z)$ for $\beta\geq3(1-\log2)\approx0.920558$.
\item 
$1+\beta\left(\frac{zp'(z)}{p(z)}\right)\prec\varphi_{\scriptscriptstyle{L}}(z)$ for
              $\beta\geq\frac{2\left(\sqrt2+\log2-1-\log(1+\sqrt2)\right)}{\log(5/3)}\approx0.884792$.
\item 
$1+\beta\left(\frac{zp'(z)}{p^2(z)}\right)\prec\varphi_{\scriptscriptstyle{L}}(z)$ for
                                 $\beta\geq5\left(\sqrt2+\log2-1-\log(1+\sqrt2)\right)\approx1.12994$.
\end{enumerate}
Each estimate on $\beta$ is sharp.
\end{theorem}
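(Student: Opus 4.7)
The plan is to invoke \Cref{Lemma-3.4h-p132-Miller-Mocanu} with $\vartheta(w)\equiv 1$ and $\lambda(w)=\beta/w^{j}$, using as dominant the function $q_\beta$ defined as the analytic solution, normalized by $q_\beta(0)=1$, of the extremal ODE
\begin{align*}
1+\beta\frac{zq_\beta'(z)}{q_\beta^{j}(z)}=\varphi_{\scriptscriptstyle{L}}(z)=\sqrt{1+z}.
\end{align*}
The substitution $u=\sqrt{1+t}$ integrates the right-hand side explicitly, giving
\begin{align*}
Q(z):=\int_{0}^{z}\frac{\sqrt{1+t}-1}{t}\,dt=2\left(\sqrt{1+z}-1-\log\frac{1+\sqrt{1+z}}{2}\right),
\end{align*}
and hence the closed-form dominants $q_\beta(z)=1+Q(z)/\beta$ for $j=0$, $q_\beta(z)=\exp(Q(z)/\beta)$ for $j=1$, and $q_\beta(z)=\beta/(\beta-Q(z))$ for $j=2$.

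With these choices $\Theta(z)=\sqrt{1+z}-1$ and $h(z)=\sqrt{1+z}$. A short rationalization shows
\begin{align*}
\frac{zh'(z)}{\Theta(z)}=\frac{z\Theta'(z)}{\Theta(z)}=\frac{1}{2}+\frac{1}{2\sqrt{1+z}},
\end{align*}
and since $\sqrt{1+z}$ maps $\mathbb{D}$ into the right half-plane (in fact into the right half of the Bernoulli lemniscate), this quantity has real part exceeding $1/2$. This simultaneously verifies that $\Theta$ is starlike (hypothesis (ii)) and that hypothesis (iii) of the lemma holds. The lemma therefore yields the intermediate conclusion that every $p$ satisfying the subordination in (a), (b), or (c) satisfies $p\prec q_\beta$, with $q_\beta$ the best dominant.

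By transitivity, it suffices to show that $q_\beta\prec\varphi_{\scriptscriptstyle{Ne}}$ holds exactly when $\beta$ meets the announced lower bound. Since $q_\beta$ has real Taylor coefficients its image is symmetric about the real axis, and the real cross-section of the nephroid is the interval $(\varphi_{\scriptscriptstyle{Ne}}(-1),\varphi_{\scriptscriptstyle{Ne}}(1))=(1/3,\,5/3)$. A geometric comparison of the boundary curve $q_\beta(e^{i\theta})$ with the cusped nephroid, via monotonicity of its real part on $[0,\pi]$, should reduce the inclusion $q_\beta(\mathbb{D})\subseteq\varphi_{\scriptscriptstyle{Ne}}(\mathbb{D})$ to the two real-axis inequalities $q_\beta(-1)\geq 1/3$ and $q_\beta(1)\leq 5/3$. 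Substituting $Q(-1)=2(\log 2-1)$ and $Q(1)=2(\sqrt{2}-1-\log\tfrac{1+\sqrt{2}}{2})$ into each formula and solving, the left-cusp condition is binding in (a), giving $\beta\geq 3(1-\log 2)$, while in (b) and (c) the right-cusp condition is binding and yields $2(\sqrt{2}+\log 2-1-\log(1+\sqrt{2}))/\log(5/3)$ and $5(\sqrt{2}+\log 2-1-\log(1+\sqrt{2}))$, respectively.

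For sharpness I would take $p=q_\beta$ with $\beta$ equal to the stated threshold: by construction $p$ satisfies the hypothesized subordination (the Schwarz function being the identity), and one of $p(\pm 1)$ coincides with a cusp of the nephroid, so for any smaller $\beta$ the function $p=q_\beta$ itself provides a counterexample. The principal technical obstacle I foresee is the geometric reduction in the third paragraph: because the nephroid is non-convex with two inward-pointing cusps, one must verify carefully that the image curve $q_\beta(\partial\mathbb{D})$ does not escape through the concave arcs between the cusps, which requires a close parametric analysis of $q_\beta(e^{i\theta})$ against $\varphi_{\scriptscriptstyle{Ne}}(e^{i\theta})$ rather than just comparing boundary values on the real axis.
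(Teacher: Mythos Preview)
Your approach is essentially identical to the paper's: the same choice of $\vartheta\equiv1$, $\lambda(w)=\beta/w^{j}$, the same explicit dominants $q_\beta$ built from the primitive $Q(z)$, and the same reduction via \Cref{Lemma-3.4h-p132-Miller-Mocanu} to the subordination $q_\beta\prec\varphi_{\scriptscriptstyle{Ne}}$. Regarding the technical obstacle you flag at the end, the paper does not carry out the parametric boundary analysis you anticipate; it derives the real-axis conditions $1/3\le q_\beta(-1)$ and $q_\beta(1)\le 5/3$ analytically and then appeals to a graphical observation (with accompanying figures) to claim these are also sufficient for the inclusion $q_\beta(\mathbb{D})\subset\varphi_{\scriptscriptstyle{Ne}}(\mathbb{D})$, so your write-up is already at least as rigorous as the paper's own argument.
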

\begin{proof}
\begin{enumerate}[(a):]
\item\label{Proof-Thrm-LemB-Impl-Neph-Part-a-SRN}
     Consider the first-order linear differential equation given by
     \begin{align}\label{Diff-Eqn-Lem-SRN}
     1+\beta{zq'_{\beta}(z)}=\varphi_{\scriptscriptstyle{L}}(z),
     \end{align}
     where $\varphi_{\scriptscriptstyle{L}}(z)$ is defined in the hypothesis. It is easy to verify that the analytic univalent function $q_{\beta}:\overline{\mathbb{D}}\to\mathbb{C}$ defined by
    \begin{align*}
    q_{\beta}(z)=1+\frac{2}{\beta}\Big(\varphi_{\scriptscriptstyle{L}}(z)+\log2-\log\left(1+\varphi_{\scriptscriptstyle{L}}(z)\right)-1\Big).
     \end{align*}
     is a solution of \eqref{Diff-Eqn-Lem-SRN}. For $\xi\in\mathbb{C}$, take $\vartheta(\xi)=1$ and $\lambda(\xi)=\beta$ in \Cref{Lemma-3.4h-p132-Miller-Mocanu} so that the functions $\Theta,h:\overline{\mathbb{D}}\to\mathbb{C}$ reduce to
       \begin{align*}
       \Theta(z)=zq'_{\beta}(z)\lambda\big(q_{\beta}(z)\big)={\beta}zq'_{\beta}(z)=\varphi_{\scriptscriptstyle{L}}(z)-1
      \end{align*}
       and
      \begin{align*}
      h(z)=\vartheta\big(q_\beta(z)\big)+\Theta(z)=1+\Theta(z)=\varphi_{\scriptscriptstyle{L}}(z).
      \end{align*}
     Since the image of $\mathbb{D}$ under the function $\varphi_{\scriptscriptstyle{L}}(z)$ is a convex domain, the function $h(z)$ is convex in $\mathbb{D}$. Further, as every convex function is starlike with respect to each of its points, the function $\Theta(z)=h(z)-1$ is starlike in $\mathbb{D}$. Therefore, by the analytic characterization of starlike functions, it follows that
     \begin{align*}
     \mathrm{Re}\left({zh'(z)}/{\Theta(z)}\right)=\mathrm{Re}\left({z\Theta'(z)}/{\Theta(z)}\right)>0, \quad z\in\mathbb{D}.
     \end{align*}
%
%
     Also $p(0)=1=q_{\beta}(0)$ shows that \Cref{Lemma-3.4h-p132-Miller-Mocanu} is applicable, and hence the differential subordination  
     \begin{align*}
     \vartheta(p(z))+zp'(z)&\lambda(p(z))=\\
     &1+{\beta}zp'(z)\prec\varphi_{\scriptscriptstyle{L}}(z)=1+{\beta}zq'_{\beta}(z)\\
     &\hspace{12em}=\vartheta(q_{\beta}(z))+zq'_{\beta}(z)\lambda(q_{\beta}(z))
     \end{align*}   
     implies the subordination $p\prec{q_{\beta}}$. Now, the desired result $p\prec\varphi_{\scriptscriptstyle{Ne}}$ will follow by the transitivity of $\prec$ if the subordination
    $q_{\beta}\prec\varphi_{\scriptscriptstyle{Ne}}$
    holds. The necessary condition for $q_{\beta}\prec\varphi_{\scriptscriptstyle{Ne}}$ to hold true is that
    \begin{align}\label{N&S-Cond-LemB-Impl-Neph-Part-a-SRN}
     {1}/{3}=\varphi_{\scriptscriptstyle {Ne}}(-1)<q_\beta(-1)<q_\beta(1)<\varphi_{\scriptscriptstyle {Ne}}(1)={5}/{3}.
     \end{align}
     On simplifying the condition \eqref{N&S-Cond-LemB-Impl-Neph-Part-a-SRN}, the following two  inequalities are obtained
     \begin{align*}
      \beta\geq3\left(1-\log2\right)=\beta_1\; \text{ and }\; \beta\geq3\left(\sqrt2+\log2-1-\log(1+\sqrt2)\right)=\beta_2.
        \end{align*}
       Therefore, the necessary condition for $q_{\beta}\prec\varphi_{\scriptscriptstyle{Ne}}$ is that $\beta\geq\max\{\beta_1,\beta_2\}=\beta_1=3\left(1-\log2\right)$. Moreover, a graphical observation (see \Cref{Fig1-LB-Impl-Neph-SRN}) shows that whenever $\beta\geq\beta_1=3\left(1-\log2\right)$, the range of $q_\beta(z)$ is completely contained in the nephroid domain $\varphi_{\scriptscriptstyle{Ne}}(\mathbb{D})$. Since the function $\varphi_{\scriptscriptstyle{Ne}}(z)$ is univalent in $\mathbb{D}$ and $q_\beta(0)=\varphi_{\scriptscriptstyle{Ne}}(0)=1$, we conclude that the condition $\beta\geq\beta_1$ sufficiently implies the subordination $q_{\beta}\prec\varphi_{\scriptscriptstyle{Ne}}$. The sharpness of the estimate on $\beta$ follows from the fact that $q_\beta(-1)=1/3$ for $\beta=3(1-\log2)$. This completes part \eqref{Proof-Thrm-LemB-Impl-Neph-Part-a-SRN} of \Cref{Thrm-LemB-Impl-Neph-SRN}.
       
      \begin{figure}[H]
	   \includegraphics[scale=0.75]{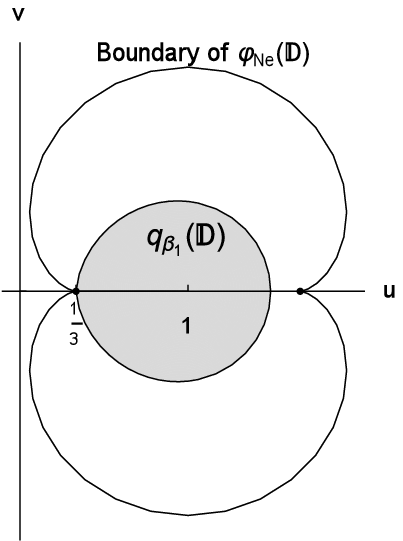}
	   \caption{For $\beta_1=3(1-\log2)$, $q_{\beta_1}(\mathbb{D})\subset\varphi_{\scriptscriptstyle{Ne}}(\mathbb{D})$.}
	    \label{Fig1-LB-Impl-Neph-SRN}	
        \end{figure}

\item\label{Proof-Thrm-LemB-Impl-Neph-Part-b-SRN}
      Define the analytic function $\hat{q}_\beta:\overline{\mathbb{D}}\to\mathbb{C}$ as
      \begin{align*}
      \hat{q}_{\beta}(z)
      =\exp\left(\frac{2}{\beta}\left(\varphi_{\scriptscriptstyle{L}}(z)+\log2-\log\left(1+\varphi_{\scriptscriptstyle{L}}(z)\right)-1\right)\right).
        \end{align*}
       The function $\hat{q}_\beta(z)$ satisfies the differential equation $1+\beta{z\hat{q}'_{\beta}(z)/\hat{q}_{\beta}(z)}=\varphi_{\scriptscriptstyle{L}}(z)$. On taking $\vartheta(\xi)=1$ and $\lambda(\xi)=\beta/\xi$ in \Cref{Lemma-3.4h-p132-Miller-Mocanu}, we obtain $\Theta(z)=z\hat{q}'_{\beta}(z)\lambda\big(\hat{q}_{\beta}(z)\big)
       =\beta{z\hat{q}'_{\beta}(z)/\hat{q}_{\beta}(z)}=\varphi_{\scriptscriptstyle{L}}(z)-1$ and $h(z)=1+\Theta(z)=\varphi_{\scriptscriptstyle{L}}(z)$. Again, the convexity of $\varphi_{\scriptscriptstyle{L}}(z)$ implies the convexity of $h(z)$ and the positiveness of $\mathrm{Re}\left(zh'(z)/\Theta(z)\right)=\mathrm{Re}\left(z\Theta'(z)/\Theta(z)\right)$ in $\mathbb{D}$. Applying \Cref{Lemma-3.4h-p132-Miller-Mocanu}, it follows that the first-order differential subordination
        \begin{align*}
       1+\beta\frac{zp'(z)}{p(z)}\prec\varphi_{\scriptscriptstyle{L}}(z)=1+\beta\frac{z\hat{q}'_{\beta}(z)}{\hat{q}_{\beta}(z)}
       \end{align*}
       implies $p\prec{\hat{q}_{\beta}}$. In view of this differential chain, the claimed subordination $p\prec\varphi_{\scriptscriptstyle{Ne}}$ holds if the subordination $\hat{q}_\beta\prec\varphi_{\scriptscriptstyle{Ne}}$ holds. Likewise in \eqref{Proof-Thrm-LemB-Impl-Neph-Part-a-SRN}, the subordination $\hat{q}_\beta\prec\varphi_{\scriptscriptstyle {Ne}}$ holds if, and only if, $1/3<\hat{q}_\beta(-1)<\hat{q}_\beta(1)<5/3$, which further gives
       \begin{align*}
        \beta\geq\frac{2(1-\log2)}{\log3}=\beta_1\; \text{ and }\; \beta\geq \frac{2\left(\sqrt2+\log2-1-\log(1+\sqrt2)\right)}{\log(5/3)}=\beta_2.
       \end{align*}
        Thus, the subordination $\hat{q}_{\beta}\prec\varphi_{\scriptscriptstyle{Ne}}$ holds provided $\beta\geq\max\{\beta_1,\beta_2\}=\beta_2$. Further, $\hat{q}_{\beta}(1)=5/3$ for $\beta=\beta_2$, showing that the bound on $\beta$ can not be decreased further, see \Cref{Fig2-LB-Impl-Neph-SRN} (the curve $\hat{\gamma}_\beta$).

\item\label{Proof-Thrm-LemB-Impl-Neph-Part-c-SRN}
        Consider the function $\tilde{q}_\beta$ defined on $\overline{\mathbb{D}}$ and given by       
         \begin{align*}
          \tilde{q}_\beta(z)
         =\left(1-\frac{2}{\beta}\left(\varphi_{\scriptscriptstyle{L}}(z)+\log2-\log\left(1+\varphi_{\scriptscriptstyle{L}}(z)\right)-1\right)\right)^{-1}.
         \end{align*}
         The function $\tilde{q}_\beta$ is analytic and is a solution of the first-order differential equation $1+\beta{z\tilde{q}'_\beta/\tilde{q}^2_\beta}=\varphi_{\scriptscriptstyle{L}}$. Let $\xi\in\mathbb{C}$. Setting $\vartheta(\xi)=1$ and $\lambda(\xi)=\beta/\xi^2$ in \Cref{Lemma-3.4h-p132-Miller-Mocanu}, the functions $\Theta(z)$ and $h(z)$ reduce to
        \begin{align*}
       \Theta(z)=z\tilde{q}'_\beta(z)\lambda(\tilde{q}_\beta(z))
        =\varphi_{\scriptscriptstyle{L}}(z)-1 \text{ and } h(z)=1+\Theta(z)=\varphi_{\scriptscriptstyle{L}}(z).
        \end{align*}
        As the function $h(z)=\varphi_{\scriptscriptstyle{L}}(z)=\sqrt{1+z}$ is convex in $\mathbb{D}$ and $\mathrm{Re}\left(zh'(z)/\Theta(z)\right)>0$ for each $z\in\mathbb{D}$, we conclude from \Cref{Lemma-3.4h-p132-Miller-Mocanu} that the first-order differential subordination
        \begin{align*}
        1+\beta\frac{zp'(z)}{p^2(z)}\prec\varphi_{\scriptscriptstyle{L}}(z)=1+\beta\frac{z\tilde{q}'_\beta(z)}{\tilde{q}^2_\beta(z)}
       \end{align*}
        implies the subordination $p{\prec}\tilde{q}_\beta$. To attain the subordination $p\prec\varphi_{\scriptscriptstyle{Ne}}$, we only need to show $\tilde{q}_\beta\prec\varphi_{\scriptscriptstyle{Ne}}$. As earlier, this is true if, and only if, $1/3<\tilde{q}_\beta(-1)<\tilde{q}_\beta(1)<5/3$. That is, if
         \begin{align*}
          \beta\geq\max&\left\{1-\log2,\,5\left(\sqrt2+\log2-1-\log(1+\sqrt2)\right)\right\}\\
                     &=5\left(\sqrt2+\log2-1-\log(1+\sqrt2)\right).
         \end{align*}
        For sharpness of the estimate obtained on the real $\beta$, verify that $\tilde{q}_\beta(1)=5/3$ when $\beta=5\left(\sqrt2-\log(1+\sqrt2)+\log2-1\right)$. Also see \Cref{Fig2-LB-Impl-Neph-SRN} (the curve $\tilde{\gamma}_\beta$). \qedhere
\end{enumerate}
\end{proof}

If $f\in\mathcal{A}$, then $p(z)=zf'(z)/f(z)\in\mathcal{H}$ and satisfies $p(0)=1$. In view of this observation, the following sufficient conditions for $\mathcal{S}^*_{Ne}$ are obtained on setting $p(z)=zf'(z)/f(z)$ in \Cref{Thrm-LemB-Impl-Neph-SRN}.
\begin{corollary}
	Let $f\in\mathcal{A}$, and let
	\begin{align}\label{Definition-J}
	\mathcal{G}(z):=1-\frac{zf'(z)}{f(z)}+\frac{zf''(z)}{f'(z)},\quad z\in\mathbb{D}.
	\end{align}
	 Then each of the following is sufficient to imply $f\in\mathcal{S}^*_{Ne}$.
	\begin{enumerate}[\rm(a)]
		\item $1+\beta\mathcal{G}(z)\left(\frac{zf'(z)}{f(z)}\right)\prec\varphi_{\scriptscriptstyle{L}}(z)$ for $\beta\geq 3(1-\log2)$,
		\item 
		$1+\beta\mathcal{G}(z)\prec\varphi_{\scriptscriptstyle{L}}(z)$ for $\beta\geq \frac{2\left(\sqrt2+\log2-\log(1+\sqrt2)-1\right)}{\log(5/3)}$,
		\item $1+\beta\mathcal{G}(z)\left(\frac{zf'(z)}{f(z)}\right)^{-1}\prec\varphi_{\scriptscriptstyle{L}}(z)$ for $\beta\geq5\left(\sqrt2+\log2-\log(1+\sqrt2)-1\right)$.
	\end{enumerate}
	The bounds on $\beta$ are best possible.
\end{corollary}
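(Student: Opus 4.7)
\bigskip

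\noindent\textbf{Proof proposal.} The strategy is to reduce each of the three implications to the corresponding part of \Cref{Thrm-LemB-Impl-Neph-SRN} by an appropriate choice of the auxiliary function $p$. For any $f\in\mathcal{A}$ the quotient $p(z):=zf'(z)/f(z)$ is analytic in $\mathbb{D}$ with $p(0)=1$, so $p\in\mathcal{H}$ satisfies the hypothesis of the theorem. The definition of $\mathcal{S}^*_{Ne}$ is precisely that $p\prec\varphi_{\scriptscriptstyle{Ne}}$. Hence it suffices, in each of the three cases, to identify the premise of the corollary with the premise of the matching part of \Cref{Thrm-LemB-Impl-Neph-SRN} applied to this particular $p$.

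The key algebraic step is the logarithmic differentiation
\begin{align*}
\frac{zp'(z)}{p(z)}=1+\frac{zf''(z)}{f'(z)}-\frac{zf'(z)}{f(z)}=\mathcal{G}(z),
\end{align*}
from which one reads off the three identities
\begin{align*}
zp'(z)=\frac{zf'(z)}{f(z)}\,\mathcal{G}(z),\qquad \frac{zp'(z)}{p(z)}=\mathcal{G}(z),\qquad \frac{zp'(z)}{p^2(z)}=\mathcal{G}(z)\left(\frac{zf'(z)}{f(z)}\right)^{-1}.
\end{align*}
Substituting these into the subordinations $1+\beta\,zp'(z)/p^{j}(z)\prec\varphi_{\scriptscriptstyle{L}}(z)$ for $j=0,1,2$ transforms them, respectively, into the hypotheses (a), (b), (c) of the corollary. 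Thus each stated hypothesis is exactly the corresponding hypothesis of \Cref{Thrm-LemB-Impl-Neph-SRN}, so invoking that theorem yields $p\prec\varphi_{\scriptscriptstyle{Ne}}$, i.e.\ $f\in\mathcal{S}^*_{Ne}$.

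Sharpness is inherited from \Cref{Thrm-LemB-Impl-Neph-SRN}: the extremal function $p=q_{\beta},\hat{q}_{\beta},\tilde{q}_{\beta}$ (for $j=0,1,2$ respectively) produced there can be converted into an extremal $f\in\mathcal{A}$ by solving the first-order linear ODE $zf'(z)/f(z)=p(z)$ with $f(0)=0$, $f'(0)=1$, which shows that the lower bounds on $\beta$ cannot be relaxed. There is no real obstacle in the argument; the entire content is the observation $zp'/p=\mathcal{G}$ together with transitivity of subordination, so the proof essentially writes itself once the substitution $p=zf'/f$ is made.
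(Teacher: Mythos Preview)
Your proof is correct and follows exactly the paper's approach: set $p(z)=zf'(z)/f(z)$ and invoke \Cref{Thrm-LemB-Impl-Neph-SRN}. You have simply made explicit the identity $zp'(z)/p(z)=\mathcal{G}(z)$ and the sharpness transfer that the paper leaves implicit in its one-line derivation.
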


\begin{minipage}{0.45\textwidth}
  \centering
  \includegraphics[scale=0.85]{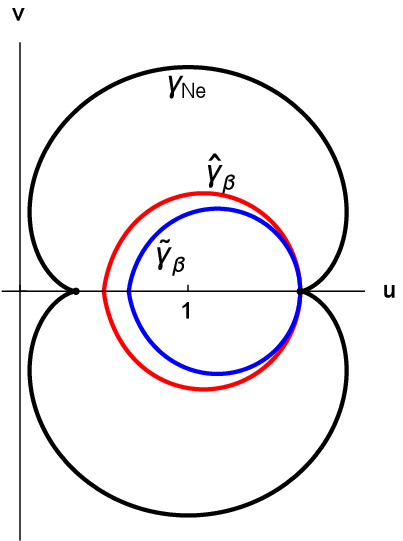}
\end{minipage}
\begin{minipage}{0.45\textwidth}
{\small{
   $\gamma_{Ne}:$ Boundary curve of $\varphi_{\scriptscriptstyle{Ne}}(\mathbb{D})$.\\
   \vspace{.01em}

     $\hat{\gamma}_\beta:$ Boundary curve of $\hat{q}_\beta(\mathbb{D})$ with\\ $\beta=\frac{2\left(\sqrt{2}-1+\log2-
     \log\left(1+\sqrt{2}\right)\right)}{\log\left(5/3\right)}$.\\
     \vspace{.01em}

     $\tilde{\gamma}_\beta:$ Boundary curve of $\tilde{q}_\beta(\mathbb{D})$
      with\\ $\beta=5\left(\sqrt{2}-1+\log2-\log\left(1+\sqrt{2}\right)\right)$.\\
    }}
\end{minipage}
\captionof{figure}{The functions $\hat{q}_\beta$ and $\tilde{q}_\beta$ are, respectively, the functions defined in
	 (\ref{Proof-Thrm-LemB-Impl-Neph-Part-b-SRN}) and
	(\ref{Proof-Thrm-LemB-Impl-Neph-Part-c-SRN}) of \Cref{Thrm-LemB-Impl-Neph-SRN}.}
\label{Fig2-LB-Impl-Neph-SRN}

\begin{theorem}\label{Thrm-ShftdLemB-Impl-Neph-SRN}
Let
\begin{align*}
\varphi_{\scriptscriptstyle{RL}}(z):=\sqrt{2}-(\sqrt{2}-1)\sqrt{\frac{1-z}{1+2(\sqrt{2}-1)z}}
\end{align*}
and
\begin{align*}
g_0(z):=\sqrt{2 \left(\sqrt{2}-1\right)}\times\tan ^{-1}\left(\frac{\sqrt{2 \left(\sqrt{2}-1\right)} \left(\sqrt{2 \left(\sqrt{2}-1\right) z+1}-\sqrt{1-z}\right)}{2 \left(\sqrt{2}-1\right) \sqrt{1-z}+\sqrt{2 \left(\sqrt{2}-1\right) z+1}}\right).
\end{align*}
Then, for $p\in\mathcal{H}$ satisfying $p(0)=1$, each of the following differential subordinations is sufficient to imply the subordination $p\prec\varphi_{\scriptscriptstyle {Ne}}$. Moreover, the respective bounds on $\beta$ can not be improved further.
\begin{enumerate}[\rm(a)]
\item
$1+\beta{zp'(z)}\prec\varphi_{\scriptscriptstyle{RL}}(z)$ for $\beta\geq
                 -\frac{3\left(2 \left(\sqrt{2}-1\right) \log \left(\frac{1}{2} \left(\sqrt{1-2 \left(\sqrt{2}-1\right)}+\sqrt{2}\right)\right)+g_0(-1)\right)}{2}\approx0.822832$.
\item
$1+\beta\left(\frac{zp'(z)}{p(z)}\right)\prec\varphi_{\scriptscriptstyle{RL}}(z)$ for $\beta\geq
            \frac{2 \left(\sqrt{2}-1\right) \log \left(\frac{1}{2} \sqrt{2 \left(\sqrt{2}-1\right)+1}\right)+g_0(1)}
                             {\log \left(\frac{5}{3}\right)}\approx0.680906$.
\item
$1+\beta\left(\frac{zp'(z)}{p^2(z)}\right)\prec\varphi_{\scriptscriptstyle{RL}}(z)$ for $\beta\geq
         \frac{5\left(2 \left(\sqrt{2}-1\right) \log \left(\frac{1}{2} \sqrt{2 \left(\sqrt{2}-1\right)+1}\right)
         +g_0(1)\right)}{2}\approx0.869561$.
\end{enumerate}
\end{theorem}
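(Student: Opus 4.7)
The plan is to follow the template of \Cref{Thrm-LemB-Impl-Neph-SRN} verbatim, substituting $\varphi_{\scriptscriptstyle{RL}}$ for $\varphi_{\scriptscriptstyle{L}}$ in the construction of the candidate dominants and reworking the underlying antiderivative. For part (a), I would solve the linear ODE $1+\beta zq'_{\beta}(z)=\varphi_{\scriptscriptstyle{RL}}(z)$ by setting
\begin{align*}
q_{\beta}(z) := 1+\frac{1}{\beta}\int_{0}^{z}\frac{\varphi_{\scriptscriptstyle{RL}}(t)-1}{t}\,dt,
\end{align*}
and for parts (b) and (c) I would take, analogously,
\begin{align*}
\hat{q}_{\beta}(z)=\exp\!\left(\frac{1}{\beta}\int_{0}^{z}\frac{\varphi_{\scriptscriptstyle{RL}}(t)-1}{t}\,dt\right), \qquad \tilde{q}_{\beta}(z)=\left(1-\frac{1}{\beta}\int_{0}^{z}\frac{\varphi_{\scriptscriptstyle{RL}}(t)-1}{t}\,dt\right)^{-1},
\end{align*}
which satisfy $1+\beta z\hat{q}'_{\beta}/\hat{q}_{\beta}=\varphi_{\scriptscriptstyle{RL}}$ and $1+\beta z\tilde{q}'_{\beta}/\tilde{q}_{\beta}^{2}=\varphi_{\scriptscriptstyle{RL}}$ respectively. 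The function $g_{0}$ appearing in the statement is precisely the arctangent component that appears when the integral above is written in closed form, with the remaining logarithmic pieces accounting for the $\log$-terms in the stated thresholds.

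Next, for each $j\in\{0,1,2\}$ I would invoke \Cref{Lemma-3.4h-p132-Miller-Mocanu} with $\vartheta(\xi)=1$ and $\lambda(\xi)=\beta,\;\beta/\xi,\;\beta/\xi^{2}$, respectively. In every case the auxiliary functions collapse to $\Theta(z)=\varphi_{\scriptscriptstyle{RL}}(z)-1$ and $h(z)=\varphi_{\scriptscriptstyle{RL}}(z)$. Since $\varphi_{\scriptscriptstyle{RL}}(\mathbb{D})$ is the convex (shifted-lemniscate) region of \Cref{Figure-SLemB-SRN}, the function $h$ is convex, so $\Theta=h-1$ is starlike with respect to the origin and $\mathrm{Re}(zh'(z)/\Theta(z))=\mathrm{Re}(z\Theta'(z)/\Theta(z))>0$. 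Hypotheses (i) and (iii) of the lemma are thus met, and one concludes $p\prec q_{\beta}$ (respectively $\hat{q}_{\beta}$, $\tilde{q}_{\beta}$).

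To upgrade this to $p\prec\varphi_{\scriptscriptstyle{Ne}}$ via transitivity, it suffices to verify $q_{\beta}\prec\varphi_{\scriptscriptstyle{Ne}}$ and similarly for the other two dominants. Mirroring the endpoint argument of \Cref{Thrm-LemB-Impl-Neph-SRN}, the necessary diameter conditions $1/3=\varphi_{\scriptscriptstyle{Ne}}(-1)<q_{\beta}(-1)$ and $q_{\beta}(1)<\varphi_{\scriptscriptstyle{Ne}}(1)=5/3$ translate, after evaluating the explicit antiderivative at $z=\pm1$, into a pair of inequalities on $\beta$; taking the maximum of the two gives exactly the numerical constants printed in (a)--(c). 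For sufficiency — that the real-axis comparison already forces $q_{\beta}(\mathbb{D})\subset\varphi_{\scriptscriptstyle{Ne}}(\mathbb{D})$ — I would rely on the same graphical enclosure argument as in \Cref{Fig2-LB-Impl-Neph-SRN}, plotting the boundary curve $q_{\beta}(\partial\mathbb{D})$ inside $\gamma_{Ne}$. Sharpness is then immediate, equality being attained at $z=-1$ in (a) and at $z=1$ in (b) and (c).

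The main obstacle I anticipate is the explicit evaluation of $\int_{0}^{z}(\varphi_{\scriptscriptstyle{RL}}(t)-1)/t\,dt$ in closed form. The substitution $s=\sqrt{(1-t)/(1+2(\sqrt{2}-1)t)}$ rationalizes the integrand, after which a partial-fraction decomposition separates it into a logarithmic summand and an arctangent summand — the latter being exactly $g_{0}$. Careful tracking of the square-root branch is required so that $q_{\beta}(0)=1$ is preserved, and one must also verify monotonicity of $q_{\beta}$, $\hat{q}_{\beta}$, $\tilde{q}_{\beta}$ along the real diameter of $\mathbb{D}$ so that the two-point endpoint check is indeed the binding constraint for the enclosure inside the nephroid.
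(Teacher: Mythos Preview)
Your proposal is correct and follows essentially the same route as the paper: construct the three dominants $q_\beta,\hat q_\beta,\tilde q_\beta$ from the antiderivative of $(\varphi_{\scriptscriptstyle{RL}}-1)/t$, apply \Cref{Lemma-3.4h-p132-Miller-Mocanu} with $\vartheta\equiv1$ and $\lambda(\xi)=\beta,\beta/\xi,\beta/\xi^2$ using the convexity of $\varphi_{\scriptscriptstyle{RL}}$, and then reduce $q_\beta\prec\varphi_{\scriptscriptstyle{Ne}}$ to the real-axis endpoint comparison with a graphical check for sufficiency. The paper presents the antiderivative directly in the closed form involving $g_0$ and $\Psi_{\scriptscriptstyle{RL}}(z)=\sqrt{1-z}+\sqrt{2(\sqrt{2}-1)z+1}$ rather than deriving it, and your identification of the sharpness points ($z=-1$ in (a), $z=1$ in (b) and (c)) matches the paper exactly.
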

\begin{proof}
\begin{enumerate}[(a):]
\item
      Here, we consider the first-order linear differential equation $1+\beta{zq'_{\beta}(z)}=\varphi_{\scriptscriptstyle{RL}}(z)$. This differential equation has an analytic solution $q_{\beta}$ defined on $\overline{\mathbb{D}}$ given by
      \begin{align*}
      q_{\beta}(z)=1+\frac{1}{\beta}\left(2\left(\sqrt{2}-1\right)
      \log\left(\frac{\Psi_{\scriptscriptstyle{RL}}(z)}{2}\right)+g_0(z)\right),
      \end{align*}
     where $g_0(z)$ is defined in the hypothesis and 
      \begin{align}\label{Def-Psi-RL-SRN}
      \Psi_{\scriptscriptstyle{RL}}(z):=\sqrt{1-z}+\sqrt{2\left(\sqrt{2}-1\right)z+1}.
      \end{align}      
     Let $\xi\in\mathbb{C}$. On choosing $\vartheta(\xi)=1$ and $\lambda(\xi)=\beta$ in \Cref{Lemma-3.4h-p132-Miller-Mocanu}, we obtain $\Theta(z)=zq'_{\beta}(z)\lambda(q_{\beta}(z))=\varphi_{\scriptscriptstyle{RL}}(z)-1$ and $h(z)=1+\Theta(z)=\varphi_{\scriptscriptstyle{RL}}(z)$. Since the function $\varphi_{\scriptscriptstyle{RL}}(z)$ sends $\mathbb{D}$ onto a convex region, the function $h$ is convex. Moreover, $h$ satisfies $\mathrm{Re}\left(zh'(z)/\Theta(z)\right)>0$ for each $z\in\mathbb{D}$. An application of \Cref{Lemma-3.4h-p132-Miller-Mocanu} shows that the first-order differential subordination $1+\beta{zp'(z)}\prec\varphi_{\scriptscriptstyle{RL}}(z)=1+\beta{zq'_{\beta}(z)}$ yields the subordination $p\prec{q_{\beta}}$. The required subordination $p\prec\varphi_{\scriptscriptstyle{Ne}}$ will now follow by showing $q_\beta\prec\varphi_{\scriptscriptstyle{Ne}}$. If $q_\beta\prec\varphi_{\scriptscriptstyle{Ne}}$, then
     \begin{align}\label{N&S-Cond-ShftdLemB-Impl-Neph-Part-a-SRN}
     \frac{1}{3}=\varphi_{\scriptscriptstyle {Ne}}(-1)<q_\beta(-1)<q_\beta(1)<\varphi_{\scriptscriptstyle {Ne}}(1)=\frac{5}{3}.
     \end{align}
     Fortunately, the condition \eqref{N&S-Cond-ShftdLemB-Impl-Neph-Part-a-SRN} turns out to be sufficient for the subordination $q_\beta\prec\varphi_{\scriptscriptstyle{Ne}}$ to hold, as can be seen from the graphs of the respective functions. Since, $q_{\beta}(-1)\geq{1}/{3}$ whenever
     \begin{align*}
       \beta\geq -\frac{3}{2}\left(2\left(\sqrt{2}-1\right)
            \log\left(\frac{\Psi_{\scriptscriptstyle{RL}}(-1)}{2}\right)+g_0(-1)\right)=\beta_1
     \end{align*}
     and $q_{\beta}(1)\leq{5}/{3}$ whenever
     \begin{align*}
     \beta\geq
       \frac{3}{2}\left(2\left(\sqrt{2}-1\right)
       \log\left(\frac{\Psi_{\scriptscriptstyle{RL}}(1)}{2}\right)+g_0(1)\right)=\beta_2,
       \end{align*}
       it follows from \eqref{N&S-Cond-ShftdLemB-Impl-Neph-Part-a-SRN} that the subordination $q_{\beta}\prec\varphi_{\scriptscriptstyle{Ne}}$ holds true whenever
       $\beta\geq\max\{\beta_1,\beta_2\}=\beta_1$. Also, the value of $q_{\beta}(-1)$ at $\beta=\beta_1$ is $1/3$. This proves that the estimate on $\beta$ is sharp.
\item
       Let $\Psi_{\scriptscriptstyle{RL}}(z)$ be given as in \eqref{Def-Psi-RL-SRN}. Then, elementary analysis shows that the function $\hat{q}_\beta$ given by       
       \begin{align*}
       \hat{q}_{\beta}(z)=\exp\left(\frac{2\left(\sqrt{2}-1\right)\log\left(\frac{\Psi_{\scriptscriptstyle{RL}}(z)}{2}\right)+g_0(z)}{\beta}\right)
       \end{align*}
        is analytic in $\overline{\mathbb{D}}$ and satisfies $1+\beta{z\hat{q}'_\beta(z)/\hat{q}_\beta(z)}=\varphi_{\scriptscriptstyle{RL}}(z)$. Defining the functions $\vartheta$ and $\lambda$ likewise in \Cref{Thrm-LemB-Impl-Neph-SRN}\eqref{Proof-Thrm-LemB-Impl-Neph-Part-b-SRN}, we see that the function $h(z)=\varphi_{\scriptscriptstyle{RL}}(z)$ is convex and $\mathrm{Re}\left(zh'/\Theta\right)>0$ in $\mathbb{D}$. Hence, from \Cref{Lemma-3.4h-p132-Miller-Mocanu}, it follows that
        \begin{align*}
        1+\beta\frac{zp'(z)}{p(z)}\prec\varphi_{\scriptscriptstyle{RL}}(z)=1+\beta\frac{z\hat{q}'_\beta(z)}{\hat{q}_\beta(z)}
        \end{align*}
        implies the subordination $p\prec{\hat{q}_\beta}$. Now, to arrive at the subordination $p\prec\varphi_{\scriptscriptstyle{Ne}}$, it is required that the subordination $\hat{q}_\beta\prec\varphi_{\scriptscriptstyle {Ne}}$ should hold. As in \Cref{Thrm-LemB-Impl-Neph-SRN}(\ref{Proof-Thrm-LemB-Impl-Neph-Part-b-SRN}), $\hat{q}_\beta\prec\varphi_{\scriptscriptstyle {Ne}}$ if $\beta\geq\max\{\beta_1,\beta_2\}$, where
       \begin{align*}
        \beta_1=-\frac{2\left(\sqrt{2}-1\right)\log \left(\frac{
        		\Psi_{\scriptscriptstyle{RL}}(-1)
        	}{2}\right)+g_0(-1)}{\log3}
       \end{align*}
      and
      \begin{align*}
      \beta_2=\frac{2 \left(\sqrt{2}-1\right) \log \left(\frac{
      		\Psi_{\scriptscriptstyle{RL}}(1)
      	}{2}\right)+g_0(1)}{\log(5/3)}.
      \end{align*}
\item
       Let $\Psi_{\scriptscriptstyle{RL}}(z)$ be defined as in \eqref{Def-Psi-RL-SRN}. Verify that the function
\begin{align*}
\tilde{q}_\beta(z)=\left(1-\frac{1}{\beta}\left(2\left(\sqrt{2}-1\right)
      \log\left(\frac{
      	\Psi_{\scriptscriptstyle{RL}}(z)
      }{2}\right)+g_0(z)\right)\right)^{-1}
\end{align*}
is an analytic solution of the first-order differential equation $1+\beta{z\tilde{q}'_\beta/\tilde{q}_\beta^2}=\varphi_{\scriptscriptstyle{RL}}$. On defining $\vartheta$ and $\lambda$ as in \Cref{Thrm-LemB-Impl-Neph-SRN}\eqref{Proof-Thrm-LemB-Impl-Neph-Part-c-SRN}, we get the functions $\Theta$ and $h$ defined in \Cref{Lemma-3.4h-p132-Miller-Mocanu} as $\Theta=\varphi_{\scriptscriptstyle{RL}}-1$ and $h=1+\Theta=\varphi_{\scriptscriptstyle{RL}}$. Again, the function $h(z)=\varphi_{\scriptscriptstyle{RL}}(z)$ is convex and $\mathrm{Re}\left(zh'(z)/h(z)\right)$ is positive in $\mathbb{D}$, so that from \Cref{Lemma-3.4h-p132-Miller-Mocanu} we have the implication:
\begin{align*}
1+\beta\frac{zp'(z)}{p^2(z)}\prec1+\beta\frac{z\tilde{q}'_\beta(z)}{\tilde{q}^2_\beta(z)}\implies p(z)\prec{\tilde{q}_\beta(z)}.
\end{align*}
Now it suffices to prove $\tilde{q}_\beta\prec\varphi_{\scriptscriptstyle{Ne}}$. As in \Cref{Thrm-LemB-Impl-Neph-SRN}(\ref{Proof-Thrm-LemB-Impl-Neph-Part-c-SRN}), $\tilde{q}_\beta\prec\varphi_{\scriptscriptstyle{Ne}}$ whenever $\beta\geq\max\{\beta_1,\beta_2\}$, where
\begin{align*}
\beta_1=-\frac{1}{2}\left(\left(2\sqrt{2}-2\right)\log \left(\frac{\Psi_{\scriptscriptstyle{RL}}(-1)}{2}\right)+g_0(-1)\right)
\end{align*}
and
\begin{align*}
\beta_2=\frac{5}{2} \left(\left(2\sqrt{2}-2\right) \log \left(\frac{\Psi_{\scriptscriptstyle{RL}}(1)}{2}\right)+g_0(1)\right).
\tag*{\qedhere}
\end{align*}
\end{enumerate}
\end{proof}

As in the previous theorem, the following sufficient conditions for the function class $\mathcal{S}^*_{Ne}$ immediately follow from \Cref{Thrm-ShftdLemB-Impl-Neph-SRN}.
\begin{corollary}
	Let $f\in\mathcal{A}$ and let $\mathcal{G}(z)$ be given by \eqref{Definition-J}. Then each of the following conditions imply $f\in\mathcal{S}^*_{Ne}$.
	\begin{enumerate}[\rm(a)]
		\item $1+\beta\mathcal{G}(z)\left(\frac{zf'(z)}{f(z)}\right)\prec\varphi_{\scriptscriptstyle{RL}}(z)$ for $\beta\geq-\frac{3\left(2 \left(\sqrt{2}-1\right) \log \left(\frac{1}{2} \left(\sqrt{1-2 \left(\sqrt{2}-1\right)}+\sqrt{2}\right)\right)+g_0(-1)\right)}{2}$,
		\item
		$1+\beta{\mathcal{G}(z)}\prec\varphi_{\scriptscriptstyle{RL}}(z)$ for $\beta\geq\frac{2 \left(\sqrt{2}-1\right) \log \left(\frac{1}{2} \sqrt{2 \left(\sqrt{2}-1\right)+1}\right)+g_0(1)}
		{\log \left(\frac{5}{3}\right)}$,
		\item $1+\beta\mathcal{G}(z)\left(\frac{zf'(z)}{f(z)}\right)^{-1}\prec\varphi_{\scriptscriptstyle{RL}}(z)$ for $\beta\geq\frac{5\left(2 \left(\sqrt{2}-1\right) \log \left(\frac{1}{2} \sqrt{2 \left(\sqrt{2}-1\right)+1}\right)
			+g_0(1)\right)}{2}$.
	\end{enumerate}
	The bounds on $\beta$ are sharp.
\end{corollary}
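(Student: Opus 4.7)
The plan is to reduce this corollary to \Cref{Thrm-ShftdLemB-Impl-Neph-SRN} via the standard substitution $p(z):=zf'(z)/f(z)$. For $f\in\mathcal{A}$, the quotient $f(z)/z$ is analytic and non-vanishing at the origin, so $p\in\mathcal{H}$ with $p(0)=1$. By the very definition of $\mathcal{S}^*_{Ne}$, the conclusion $f\in\mathcal{S}^*_{Ne}$ is equivalent to $p\prec\varphi_{\scriptscriptstyle{Ne}}$. Hence it suffices to check that the three hypotheses (a), (b), (c) of the corollary become, under this substitution, precisely the three hypotheses of \Cref{Thrm-ShftdLemB-Impl-Neph-SRN} corresponding to $j=0,1,2$.

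The computational core is the logarithmic-derivative identity. Differentiating $\log p(z)=\log z+\log f'(z)-\log f(z)$ yields
\begin{align*}
\frac{zp'(z)}{p(z)}=1+\frac{zf''(z)}{f'(z)}-\frac{zf'(z)}{f(z)}=\mathcal{G}(z),
\end{align*}
so that $zp'(z)=p(z)\mathcal{G}(z)=\mathcal{G}(z)\,(zf'(z)/f(z))$. Inserting this into $1+\beta zp'(z)/p^j(z)$ produces, for $j=0$, the expression $1+\beta\mathcal{G}(z)(zf'(z)/f(z))$; for $j=1$, the expression $1+\beta\mathcal{G}(z)$; and for $j=2$, the expression $1+\beta\mathcal{G}(z)(zf'(z)/f(z))^{-1}$. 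These are exactly the left-hand sides appearing in parts (a), (b), and (c). Invoking the corresponding assertion of \Cref{Thrm-ShftdLemB-Impl-Neph-SRN} with the stated bound on $\beta$ gives $p\prec\varphi_{\scriptscriptstyle{Ne}}$, hence $f\in\mathcal{S}^*_{Ne}$.

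For sharpness, the extremal dominants $q_\beta,\hat{q}_\beta,\tilde{q}_\beta$ from \Cref{Thrm-ShftdLemB-Impl-Neph-SRN} touch the boundary of $\varphi_{\scriptscriptstyle{Ne}}(\mathbb{D})$ at a cusp of the nephroid precisely when $\beta$ attains its critical value. In each case, the extremal $f^*\in\mathcal{A}$ is recovered by inverting $zf^{*\prime}(z)/f^*(z)=q^*_\beta(z)$, i.e.
\begin{align*}
f^*(z)=z\exp\int_0^z\frac{q^*_\beta(t)-1}{t}\,dt,
\end{align*}
which certifies that the bounds on $\beta$ in (a), (b), (c) cannot be weakened. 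There is no genuine obstacle beyond establishing the logarithmic-derivative identity; once that is in hand, the corollary follows immediately from \Cref{Thrm-ShftdLemB-Impl-Neph-SRN}, as claimed.
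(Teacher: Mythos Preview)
Your proof is correct and follows exactly the approach the paper intends: it derives the corollary from \Cref{Thrm-ShftdLemB-Impl-Neph-SRN} by the substitution $p(z)=zf'(z)/f(z)$, just as the analogous corollary to \Cref{Thrm-LemB-Impl-Neph-SRN} is obtained. Your explicit computation of $zp'(z)/p(z)=\mathcal{G}(z)$ and the sharpness discussion via the extremal $f^*$ are more detailed than what the paper records, but the underlying argument is the same.
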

\smallskip
\subsection{Other Convex Domains} ~\\ 
In this subsection, $\mathcal{P}(z)$ is either the modified sigmoid function $2/(1+e^{-z})$ (see \cite{Goel-Siva-2019-Sigmoid-BMMS}), or the exponential function $e^z$ \cite{Mendiratta-Ravi-2015-Expo-BMMS}.

\begin{theorem}
	Let $\varphi_{\scriptscriptstyle{SG}}(z):={2}/{(1+e^{-z})}$, and let
	\begin{align*}
	\ell(z)=\int_0^z \frac{e^t-1}{t \left(e^t+1\right)} \, dt.
	\end{align*}
	Then, for $p\in\mathcal{H}$ with $p(0)=1$, each of the following differential subordinations is sufficient for the subordination $p\prec\varphi_{\scriptscriptstyle{Ne}}$:
	\begin{enumerate}[\rm(a)]
		\item 
		$1+\beta{zp'}\prec\varphi_{\scriptscriptstyle{SG}}(z)$ for $\beta\geq {3}\ell(1)/2\approx0.730333$,
		\item 
		$1+\beta{zp'}/{p}\prec\varphi_{\scriptscriptstyle{SG}}(z)$ for $\beta\geq {\ell(1)}/{\log(5/3)}\approx0.953141$,
		\item 
		$1+\beta{zp'}/{p^2}\prec\varphi_{\scriptscriptstyle{SG}}(z)$ for $\beta\geq {5}\ell(1)/2\approx1.21722$.
	\end{enumerate}
	The bounds on $\beta$ can not be improved further.
\end{theorem}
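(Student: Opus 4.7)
The plan is to mimic the template established in \Cref{Thrm-LemB-Impl-Neph-SRN} and \Cref{Thrm-ShftdLemB-Impl-Neph-SRN}. For each $j\in\{0,1,2\}$, I would explicitly solve the first-order linear ODE $1+\beta\,zq'(z)/q^j(z)=\varphi_{\scriptscriptstyle{SG}}(z)$ to obtain a dominant; apply \Cref{Lemma-3.4h-p132-Miller-Mocanu} with $\vartheta(\xi)=1$ and $\lambda(\xi)=\beta/\xi^j$ to transfer the given subordination to $p\prec q_\beta$; and finally verify that $q_\beta\prec\varphi_{\scriptscriptstyle{Ne}}$ via boundary-value analysis.

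The key observation is that $\varphi_{\scriptscriptstyle{SG}}(z)-1=(e^z-1)/(e^z+1)$, so dividing by $z$ recovers exactly the integrand defining $\ell$. Integrating gives the explicit dominants
\begin{align*}
q_\beta(z)=1+\frac{\ell(z)}{\beta},\qquad \hat q_\beta(z)=\exp\!\left(\frac{\ell(z)}{\beta}\right),\qquad \tilde q_\beta(z)=\left(1-\frac{\ell(z)}{\beta}\right)^{-1},
\end{align*}
for $j=0,1,2$ respectively. To invoke \Cref{Lemma-3.4h-p132-Miller-Mocanu} I would note that in all three cases $\Theta(z)=\varphi_{\scriptscriptstyle{SG}}(z)-1$ and $h(z)=\varphi_{\scriptscriptstyle{SG}}(z)$; hypothesis (i) holds because $\varphi_{\scriptscriptstyle{SG}}(\mathbb{D})$ is known to be convex \cite{Goel-Siva-2019-Sigmoid-BMMS}, and the positivity $\mathrm{Re}(zh'/\Theta)=\mathrm{Re}(z\Theta'/\Theta)>0$ follows since the translate $h-1=\Theta$ of a convex function vanishing at the origin is starlike.

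It remains to show $q_\beta\prec\varphi_{\scriptscriptstyle{Ne}}$ (and similarly for $\hat q_\beta,\tilde q_\beta$). Because the integrand $(e^t-1)/(t(e^t+1))$ is \emph{even} in $t$, the function $\ell$ is odd; in particular $\ell(-1)=-\ell(1)$, and each candidate dominant is real on $(-1,1)$. As in the earlier theorems, the subordination then reduces to the condition
\begin{align*}
\tfrac{1}{3}=\varphi_{\scriptscriptstyle{Ne}}(-1)<q_\beta(-1)<q_\beta(1)<\varphi_{\scriptscriptstyle{Ne}}(1)=\tfrac{5}{3},
\end{align*}
which unwinds, case by case, to: (a) $\beta\ge 3\ell(1)/2$ (both inequalities coincide); (b) $\beta\ge\ell(1)/\log 3$ together with $\beta\ge\ell(1)/\log(5/3)$, the latter being larger; (c) $\beta\ge\ell(1)/2$ together with $\beta\ge 5\ell(1)/2$, the latter being larger. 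Sharpness in each part follows because the binding inequality becomes equality at the relevant endpoint.

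The principal obstacle, shared with the two preceding theorems, is that the real-axis bounds are only \emph{necessary} for $q_\beta(\mathbb{D})\subset\varphi_{\scriptscriptstyle{Ne}}(\mathbb{D})$; sufficiency rests on the non-convex, kidney-shaped geometry of $\varphi_{\scriptscriptstyle{Ne}}(\mathbb{D})$ actually enveloping the image $q_\beta(\mathbb{D})$. Following the paper's convention, I would certify this by a graphical check analogous to \Cref{Fig1-LB-Impl-Neph-SRN}, and a fully rigorous argument would require showing that $q_\beta$, $\hat q_\beta$, $\tilde q_\beta$ take their extreme real parts on $(-1,1)$ (plausible since $\varphi_{\scriptscriptstyle{SG}}$ is real on the real axis and maps to a domain symmetric about $\mathrm{Re}\,w=1$), which is the only delicate point beyond routine computation.
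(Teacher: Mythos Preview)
Your proposal is correct and follows essentially the same route as the paper: solve the auxiliary ODE to obtain the dominants $q_\beta,\hat q_\beta,\tilde q_\beta$ (the paper writes these as power series, which is just the Taylor expansion of your closed form $\ell(z)$), apply \Cref{Lemma-3.4h-p132-Miller-Mocanu} with $\Theta(z)=(e^z-1)/(e^z+1)$, and then reduce $q_\beta\prec\varphi_{\scriptscriptstyle{Ne}}$ to the real-axis inequalities, with sufficiency certified graphically. The only cosmetic difference is that the paper invokes hypothesis (ii) of the lemma (starlikeness of $\Theta$) directly rather than convexity of $h=\varphi_{\scriptscriptstyle{SG}}$, and your explicit use of the oddness of $\ell$ makes the endpoint computation more transparent than the paper's version.
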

\begin{proof}
	\begin{enumerate}[(a):]
		\item 
		A simple analysis shows that the analytic function $q_{\beta}:\overline{\mathbb{D}}\to\mathbb{C}$ given by
		\begin{align*}
		q_{\beta}(z)=1+\frac{1}{\beta}\left(\frac{z}{2}-\frac{z^3}{72}+\frac{z^5}{1200}-\frac{17 z^7}{282240}+\frac{31 z^9}{6531840}+\cdots\right),
		\end{align*}
		satisfies the first-order linear differential equation $1+\beta{zq'_{\beta}}=\varphi_{\scriptscriptstyle{SG}}$. Defining the functions $\vartheta$ and $\lambda$ as in \Cref{Thrm-LemB-Impl-Neph-SRN}\eqref{Proof-Thrm-LemB-Impl-Neph-Part-a-SRN}, we find that   
%
		$\Theta(z)=zq'_{\beta}(z)\lambda(q_{\beta}(z))=(e^z-1)/(e^z+1)$ is starlike and $\mathrm{Re}\left(zh'/\Theta\right)$ is positive in $\mathbb{D}$. Hence, \Cref{Lemma-3.4h-p132-Miller-Mocanu} says that $1+\beta{zp'}\prec\varphi_{\scriptscriptstyle{SG}}$ implies the subordination $p\prec{q_{\beta}}$. To arrive at the desired result, we only need to prove that the subordination $q_{\beta}\prec\varphi_{\scriptscriptstyle {Ne}}$ holds. Proceeding as in \Cref{Thrm-LemB-Impl-Neph-SRN}(\ref{Proof-Thrm-LemB-Impl-Neph-Part-a-SRN}), we see that this is true if $\beta\geq 3\ell(1)/2$.
		
		\item 
		Here, consider the differential equation $1+\beta{z\hat{q}'_{\beta}(z)}/\hat{q}(z)=\varphi_{\scriptscriptstyle{SG}}(z)$ along with its analytic solution 
		\begin{align*}
		\hat{q}_{\beta}(z)=\exp\left(\frac{1}{\beta}\left(\frac{z}{2}-\frac{z^3}{72}+\frac{z^5}{1200}-\frac{17 z^7}{282240}+\frac{31 z^9}{6531840}+\cdots\right)\right),
		\end{align*}
		defined on $\overline{\mathbb{D}}$. Now to establish the subordination $p\prec\varphi_{\scriptscriptstyle{Ne}}$, continue as  \Cref{Thrm-LemB-Impl-Neph-SRN}(\ref{Proof-Thrm-LemB-Impl-Neph-Part-b-SRN}).
		\item 
		Taking the function $\tilde{q}_\beta$ as
		\begin{align*}
		\tilde{q}_{\beta}(z)=\left(1-\frac{1}{\beta}\left(\frac{z}{2}-\frac{z^3}{72}+\frac{z^5}{1200}-\frac{17 z^7}{282240}+\frac{31 z^9}{6531840}+\cdots\right)\right)^{-1},
		\end{align*}
		and following \Cref{Thrm-LemB-Impl-Neph-SRN}(\ref{Proof-Thrm-LemB-Impl-Neph-Part-c-SRN}) yields the desired conclusion.
		\qedhere
	\end{enumerate}
\end{proof}

\begin{theorem}\label{Thrm-Exponential-Impl-Neph-SRN}
	Let $\varphi_{\scriptscriptstyle{e}}(z):=e^{z}$ be the exponential function, and let $p(z)$ be analytic such that $p(0)=1$. If any one of the following differential subordinations hold true, then $p\prec\varphi_{\scriptscriptstyle{Ne}}$. Each estimate on $\beta$ is sharp.
	\begin{enumerate}[\rm(a)]
		\item 
		$1+\beta{zp'}\prec\varphi_{\scriptscriptstyle{e}}(z)$ for $\beta\geq \sum _{n=1}^{\infty }\frac{3}{2n(n!)}\approx1.97685$,
		\item 
		$1+\beta{zp'}/{p}\prec\varphi_{\scriptscriptstyle{e}}(z)$ for $\beta\geq\frac{\sum _{n=1}^{\infty } \frac{1}{n (n!)}}{\log \left({5}/{3}\right)}\approx2.57995$,
		\item 
		$1+\beta{zp'}/{p^2}\prec\varphi_{\scriptscriptstyle{e}}(z)$ for $\beta\geq \sum _{n=1}^{\infty } \frac{5}{2n(n!)}\approx3.29476$.
	\end{enumerate}
\end{theorem}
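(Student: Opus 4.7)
The plan is to mimic the three-part structure of \Cref{Thrm-LemB-Impl-Neph-SRN} with $\varphi_{\scriptscriptstyle L}$ replaced by $\varphi_{\scriptscriptstyle e}(z)=e^z$. The key computational ingredient is that the primitive of $(e^z-1)/z$ has the explicit power series
\begin{align*}
\int_0^{z}\frac{e^t-1}{t}\,dt=\sum_{n=1}^{\infty}\frac{z^n}{n\cdot n!},
\end{align*}
which converges on $\overline{\mathbb D}$ and evaluates to $\sum_{n=1}^\infty 1/(n\cdot n!)$ at $z=1$ and to $\sum_{n=1}^\infty (-1)^n/(n\cdot n!)$ at $z=-1$. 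This is precisely the integrated form that produces the numerical constants appearing in the statement.

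For part (a), I would take the analytic univalent solution
\begin{align*}
q_\beta(z)=1+\frac{1}{\beta}\sum_{n=1}^{\infty}\frac{z^n}{n\cdot n!}
\end{align*}
of $1+\beta z q_\beta'(z)=e^z$, choose $\vartheta(\xi)=1$, $\lambda(\xi)=\beta$ in \Cref{Lemma-3.4h-p132-Miller-Mocanu}, and note that $\Theta(z)=e^z-1$ is starlike in $\mathbb D$ (a known property used in the work of Mendiratta et al.) and $h(z)=e^z$ is convex, so $\mathrm{Re}(zh'/\Theta)>0$. The lemma then yields $p\prec q_\beta$, and transitivity reduces the problem to $q_\beta\prec\varphi_{\scriptscriptstyle Ne}$. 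For parts (b) and (c) I would proceed identically, replacing $q_\beta$ by
\begin{align*}
\hat q_\beta(z)=\exp\!\Bigl(\frac{1}{\beta}\sum_{n=1}^{\infty}\frac{z^n}{n\cdot n!}\Bigr), \qquad
\tilde q_\beta(z)=\Bigl(1-\frac{1}{\beta}\sum_{n=1}^{\infty}\frac{z^n}{n\cdot n!}\Bigr)^{-1},
\end{align*}
with $(\vartheta,\lambda)=(1,\beta/\xi)$ and $(1,\beta/\xi^2)$ respectively, observing that in every case $\Theta(z)=e^z-1$ and $h(z)=e^z$, so the hypotheses of \Cref{Lemma-3.4h-p132-Miller-Mocanu} remain fulfilled.

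The remaining step in each case is to show that the respective dominant is subordinate to $\varphi_{\scriptscriptstyle Ne}$. Since $q_\beta,\hat q_\beta,\tilde q_\beta$ all send real $z$ to real values and are increasing there, the necessary condition
\begin{align*}
\tfrac{1}{3}=\varphi_{\scriptscriptstyle Ne}(-1)<(\cdot)(-1)<(\cdot)(1)<\varphi_{\scriptscriptstyle Ne}(1)=\tfrac{5}{3}
\end{align*}
yields, for $j=0,1,2$, the two candidate bounds
\begin{align*}
\beta\ge -\tfrac{3}{2}\sum_{n=1}^{\infty}\tfrac{(-1)^n}{n\cdot n!},\quad
\beta\ge \tfrac{3}{2}\sum_{n=1}^{\infty}\tfrac{1}{n\cdot n!} \qquad (j=0),
\end{align*}
and analogous pairs obtained by taking logarithms (for $\hat q_\beta$) or rearranging the reciprocal (for $\tilde q_\beta$). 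In every case the $z=1$ bound dominates the $z=-1$ bound, giving the stated $\beta_1=\sum 3/(2n\cdot n!)$, $\beta_1=\sum 1/(n\cdot n!)/\log(5/3)$, $\beta_1=\sum 5/(2n\cdot n!)$, and sharpness follows because each dominant attains $\varphi_{\scriptscriptstyle Ne}(1)=5/3$ at $z=1$ for the critical $\beta$.

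The main obstacle is passing from necessity to sufficiency, i.e.\ promoting the real-endpoint inequality above to the full containment $q_\beta(\mathbb D)\subset \varphi_{\scriptscriptstyle Ne}(\mathbb D)$. As in parts (a)–(c) of \Cref{Thrm-LemB-Impl-Neph-SRN}, I would justify this by a graphical comparison of the boundary curves $q_\beta(\partial\mathbb D)$, $\hat q_\beta(\partial\mathbb D)$, $\tilde q_\beta(\partial\mathbb D)$ with the nephroid $\gamma_{\scriptscriptstyle Ne}$, verifying that at the threshold $\beta$ the image curve lies strictly inside the nephroid except at the touching point where sharpness is witnessed. This is the only non-algorithmic step; the rest is a direct mechanical adaptation of the lemniscate proof with $\sqrt{1+z}$ replaced by $e^z$.
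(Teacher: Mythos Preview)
Your proposal is correct and follows essentially the same approach as the paper: the paper's own proof is a one-line reference back to \Cref{Thrm-LemB-Impl-Neph-SRN}, defining the identical dominant $q_\beta(z)=1+\frac{1}{\beta}\sum_{n\ge1}z^n/(n\cdot n!)$ (and the corresponding $\hat q_\beta$, $\tilde q_\beta$), invoking the starlikeness of $e^z-1$ to apply \Cref{Lemma-3.4h-p132-Miller-Mocanu}, and then checking the real-endpoint inequalities with sufficiency handled graphically. Your identification of the sharpness point at $z=1$ (rather than $z=-1$ as in the lemniscate case) is also correct.
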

\begin{proof}
	\begin{enumerate}[(a):]
		\item 
		Let the function $q_{\beta}$ be given by 
		\begin{align*}
		q_{\beta}(z)=1+\frac{1}{\beta}\sum _{n=1}^{\infty } \frac{z^n}{n (n!)}.
		\end{align*}
		This function is analytic on $\overline{\mathbb{D}}$ and satisfies the differential equation $1+\beta{zq'_{\beta}}=\varphi_{\scriptscriptstyle{e}}$. Noting that $e^{z}-1$ is starlike in $\mathbb{D}$ and proceeding as in \Cref{Thrm-LemB-Impl-Neph-SRN}(\ref{Proof-Thrm-LemB-Impl-Neph-Part-a-SRN}), the subordination $p\prec\varphi_{\scriptscriptstyle{Ne}}$ can be easily established.\\
		For $(b)$ and $(c)$, proceed as in \Cref{Thrm-LemB-Impl-Neph-SRN}(\ref{Proof-Thrm-LemB-Impl-Neph-Part-b-SRN}) and \Cref{Thrm-LemB-Impl-Neph-SRN}(\ref{Proof-Thrm-LemB-Impl-Neph-Part-c-SRN}), respectively.\qedhere
	\end{enumerate}
\end{proof}


\section{Subordination Results Related to non-Convex Domains} \label{Section-Non-Convex-Cusped-Regions-SRN}
Here we take $\mathcal{P}(z)$ to be any of the following Carath\'{e}odory functions:
\begin{enumerate}[(i)]
	\item
	$\varphi_{\scriptscriptstyle{\leftmoon}}(z)=z+\sqrt{1+z^2}$, which was introduced in 
	\cite{Gandhi-Ravi-2017-Lune, Raina-Sokol-2015-Crescent-Shaped-I} 
	and maps $\mathbb{D}$ onto a {\it crescent} shaped region (\Cref{Figure-Crescent-SRN}).
	\item
	$\varphi_{\scriptscriptstyle{C}}(z)=1+4z/3+2z^2/3$. The function $\varphi_{\scriptscriptstyle{C}}(z)$, introduced in \cite{Sharma-Ravi-2016-Cardioid}, maps $\partial\mathbb{D}$ onto the {\it cardioid} $(9u^2+9v^2-18u+5)^2-16(9u^2+9v^2-6u+1)=0$ (\Cref{Figure-Cardioid-SRN}).
	\item
	$\varphi_{\scriptscriptstyle{0}}(z)=1+\frac{z}{k}\left(\frac{k+z}{k-z}\right)$ with $k=1+\sqrt{2}$. The rational function $\varphi_{\scriptscriptstyle{0}}(z)$ was introduced by Kumar and Ravichandran \cite{Kumar-Ravi-2016-Starlike-Associated-Rational-Function}, and the region $\varphi_{\scriptscriptstyle{0}}(\mathbb{D})$ is the interior of a shifted cardioid.
	\item
	$\varphi_{\scriptscriptstyle{S}}(z)=1+\sin{z}$, which maps $\partial\mathbb{D}$ onto an {\it eight-shaped curve}. This function was introduced by Cho et al. \cite{Cho-2019-Sine-BIMS} (\Cref{Figure-SineFn-SRN}).	
\end{enumerate}
\begin{figure}[H]	
	\begin{subfigure}{0.3\textwidth}
		\centering
		\includegraphics{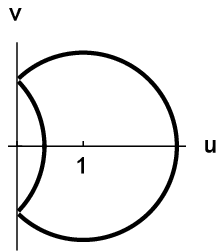}
		\caption{Boundary of $\varphi_{\scriptscriptstyle{\leftmoon}}(\mathbb{D})$.}
		\label{Figure-Crescent-SRN}
	\end{subfigure}
	\begin{subfigure}{0.3\textwidth}
		\centering
		\includegraphics{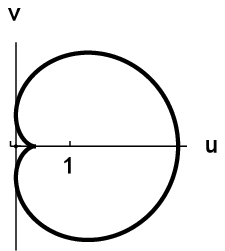}
		\caption{Boundary of $\varphi_{\scriptscriptstyle{C}}(\mathbb{D})$.} 
		\label{Figure-Cardioid-SRN}
	\end{subfigure}
\begin{subfigure}{0.3\textwidth}
	\centering
	\includegraphics{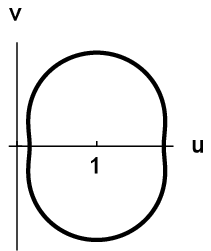}
	\caption{Boundary of $\varphi_{\scriptscriptstyle{S}}(\mathbb{D})$.} 
	\label{Figure-SineFn-SRN}
\end{subfigure}
	\caption{}
	\label{Figure-Small-Non-Convex-Cusped-Domains}	
\end{figure}
\begin{theorem}\label{Thrm-Crescent-Impl-Neph-SRN}
	Let $p\in\mathcal{H}$ with $p(0)=1$, and let $\varphi_{\scriptscriptstyle{\leftmoon}}(z):=z+\sqrt{1+z^2}$. If any of the following differential subordinations hold true, then $p\prec\varphi_{\scriptscriptstyle {Ne}}$.
	\begin{enumerate}[\rm(a)]
		\item 
		$1+\beta{zp'(z)}\prec\varphi_{\scriptscriptstyle{\leftmoon}}(z)$ for $\beta\geq\frac{3\left(\sqrt{2}-\log \left(1+\sqrt{2}\right)+\log2\right)}{2}\approx1.83898$,
		\item 
		$1+\beta\left(\frac{zp'(z)}{p(z)}\right)\prec\varphi_{\scriptscriptstyle{\leftmoon}}(z)$ for $\beta\geq\frac{\sqrt{2}+\log (2)-\log \left(1+\sqrt{2}\right)}{\log \left(\frac{5}{3}\right)}\approx2.40001$,
		\item
		 $1+\beta\left(\frac{zp'(z)}{p^2(z)}\right)\prec\varphi_{\scriptscriptstyle{\leftmoon}}(z)$ for $\beta\geq\frac{5\left(\sqrt{2}-\log \left(1+\sqrt{2}\right)+\log2\right)}{2}\approx3.06497$.
	\end{enumerate}
	Each estimate is sharp.
\end{theorem}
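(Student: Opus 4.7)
The plan is to mirror the three-part template of \Cref{Thrm-LemB-Impl-Neph-SRN}, with $\varphi_{\scriptscriptstyle{L}}$ replaced by the crescent generator $\varphi_{\scriptscriptstyle{\leftmoon}}$. For part (a), I would first solve the linear ODE $1+\beta z q_\beta'(z)=\varphi_{\scriptscriptstyle{\leftmoon}}(z)$ by direct integration; using the identity $(\sqrt{1+t^2}-1)/t=t/(1+\sqrt{1+t^2})$, antidifferentiation produces the closed form
\begin{align*}
q_\beta(z)=1+\frac{1}{\beta}\Bigl(z+\sqrt{1+z^2}-1+\log 2-\log\bigl(1+\sqrt{1+z^2}\bigr)\Bigr),
\end{align*}
which is analytic on $\overline{\mathbb{D}}$. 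With the choices $\vartheta(\xi)=1$ and $\lambda(\xi)=\beta$ in \Cref{Lemma-3.4h-p132-Miller-Mocanu}, the associated functions reduce to $\Theta(z)=\varphi_{\scriptscriptstyle{\leftmoon}}(z)-1$ and $h(z)=\varphi_{\scriptscriptstyle{\leftmoon}}(z)$.

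The key structural difference from \Cref{Thrm-LemB-Impl-Neph-SRN} is that $\varphi_{\scriptscriptstyle{\leftmoon}}(\mathbb{D})$ is crescent-shaped and \emph{not} convex, so hypothesis (i) of \Cref{Lemma-3.4h-p132-Miller-Mocanu} is unavailable. Instead I would invoke hypothesis (ii): the generator $\Theta(z)=z+\sqrt{1+z^2}-1$ is starlike in $\mathbb{D}$, a property recorded in \cite{Gandhi-Ravi-2017-Lune,Raina-Sokol-2015-Crescent-Shaped-I}. The positivity $\mathrm{Re}(zh'(z)/\Theta(z))=\mathrm{Re}(z\Theta'(z)/\Theta(z))>0$ then follows because $h=1+\Theta$, so the lemma delivers $p\prec q_\beta$.

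To close the chain $p\prec\varphi_{\scriptscriptstyle{Ne}}$, I must show $q_\beta\prec\varphi_{\scriptscriptstyle{Ne}}$. As in \Cref{Thrm-LemB-Impl-Neph-SRN}, extract the necessary condition $1/3=\varphi_{\scriptscriptstyle{Ne}}(-1)<q_\beta(-1)<q_\beta(1)<\varphi_{\scriptscriptstyle{Ne}}(1)=5/3$ and evaluate
\begin{align*}
q_\beta(1)-1=\tfrac{1}{\beta}\bigl(\sqrt{2}+\log 2-\log(1+\sqrt{2})\bigr),\qquad q_\beta(-1)-1=\tfrac{1}{\beta}\bigl(\sqrt{2}-2+\log 2-\log(1+\sqrt{2})\bigr).
\end{align*}
Solving the two inequalities yields $\beta\geq\max\{\beta_1,\beta_2\}$, and numerical comparison shows the binding constraint is $q_\beta(1)\leq 5/3$, giving $\beta\geq\tfrac{3}{2}\bigl(\sqrt{2}-\log(1+\sqrt{2})+\log 2\bigr)$. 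A graphical verification (in line with \Cref{Fig1-LB-Impl-Neph-SRN}) that $q_\beta(\overline{\mathbb{D}})\subset\varphi_{\scriptscriptstyle{Ne}}(\overline{\mathbb{D}})$ under this bound promotes necessity to sufficiency, and sharpness follows since $q_\beta(1)=5/3$ at equality.

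For parts (b) and (c) I would repeat the construction with the auxiliary dominants
\begin{align*}
\hat q_\beta(z)=\exp\bigl(q_\beta(z)-1\bigr)\qquad\text{and}\qquad \tilde q_\beta(z)=\bigl(2-q_\beta(z)\bigr)^{-1},
\end{align*}
which satisfy $1+\beta z\hat q'_\beta/\hat q_\beta=\varphi_{\scriptscriptstyle{\leftmoon}}$ and $1+\beta z\tilde q'_\beta/\tilde q_\beta^{2}=\varphi_{\scriptscriptstyle{\leftmoon}}$ respectively. The choices $\lambda(\xi)=\beta/\xi$ and $\lambda(\xi)=\beta/\xi^{2}$ in \Cref{Lemma-3.4h-p132-Miller-Mocanu} yield $p\prec\hat q_\beta$ and $p\prec\tilde q_\beta$, after which the endpoint conditions $\hat q_\beta(1)=5/3$ and $\tilde q_\beta(1)=5/3$ deliver exactly the stated bounds $\bigl(\sqrt{2}+\log 2-\log(1+\sqrt{2})\bigr)/\log(5/3)$ and $\tfrac{5}{2}\bigl(\sqrt{2}+\log 2-\log(1+\sqrt{2})\bigr)$. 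I expect the main obstacle to be the graphical step of verifying $q_\beta(\mathbb{D})\subset\varphi_{\scriptscriptstyle{Ne}}(\mathbb{D})$ (and its analogues for $\hat q_\beta,\tilde q_\beta$) rigorously: since $\varphi_{\scriptscriptstyle{Ne}}(\mathbb{D})$ is non-circular, a completely rigorous argument requires parametrizing $q_\beta(\partial\mathbb{D})$ and testing it against the nephroid equation \eqref{Equation-of-Nephroid}; following the paper's convention I would settle this by plotting, as was done in the earlier theorems.
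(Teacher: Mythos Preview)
Your proposal is correct and follows essentially the same route as the paper: the same auxiliary dominants $q_\beta$, $\hat q_\beta$, $\tilde q_\beta$ are constructed, \Cref{Lemma-3.4h-p132-Miller-Mocanu} is applied via hypothesis~(ii) using the starlikeness of $\Theta(z)=\varphi_{\scriptscriptstyle{\leftmoon}}(z)-1$ (since the crescent region is non-convex), and the binding endpoint in all three parts is $z=1$, yielding the stated sharp bounds with sufficiency confirmed graphically.
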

\begin{proof}
	\begin{enumerate}[(a):]
\item \label{Proof-Thrm-Crescent-Impl-Neph-Part-a-SRN}
		Consider the function $q_{\beta}$ defined on $\overline{\mathbb{D}}$ as
		\begin{align*}
		 q_{\beta}(z)=1+\frac{1}{\beta}\Big(\varphi_{\scriptscriptstyle{\leftmoon}}(z)-\log\left(1+\sqrt{1+z^2}\right)+\log2-1\Big).
		\end{align*}
		The function $q_{\beta}$ is an analytic solution of $1+\beta{zq'_{\beta}(z)}=\varphi_{\scriptscriptstyle{\leftmoon}}(z)$. For $\xi\in\mathbb{C}$, define $\vartheta(\xi)=1$ and $\lambda(\xi)=\beta$ to obtain the functions $\Theta$ and $h$ given in \Cref{Lemma-3.4h-p132-Miller-Mocanu} as  $\Theta(z)=\varphi_{\scriptscriptstyle{\leftmoon}}(z)-1$ and $h(z)=\varphi_{\scriptscriptstyle{\leftmoon}}(z)$. Since $\varphi_{\scriptscriptstyle{\leftmoon}}(\mathbb{D})$ is a region starlike with respect to $1$, the function $\Theta(z)=\varphi_{\scriptscriptstyle{\leftmoon}}(z)-1$ is starlike (w. r. t. origin) in $\mathbb{D}$. Further, the analytic characterization of the starlikeness of $\Theta$ implies that $\mathrm{Re}\left(zh'(z)/\Theta(z)\right)=\mathrm{Re}\left(z\Theta'(z)/\Theta(z)\right)$ is positive in $\mathbb{D}$. Therefore, an application of \Cref{Lemma-3.4h-p132-Miller-Mocanu} yields that the differential subordination
		$1+\beta{zp'(z)}\prec\varphi_{\scriptscriptstyle{\leftmoon}}(z)=1+\beta{zq'_{\beta}(z)}$ implies the subordination $p\prec{q_{\beta}}$. Now, our result $p\prec\varphi_{\scriptscriptstyle {Ne}}$ will follow if the subordination $q_{\beta}\prec\varphi_{\scriptscriptstyle {Ne}}$ holds. The necessary condition for the subordination $q_\beta\prec\varphi_{\scriptscriptstyle{Ne}}$ to hold true is that
		\begin{align}\label{N&S-Cond-Crscnt-Impl-Neph-Part-a-SRN}
		\varphi_{\scriptscriptstyle {Ne}}(-1)<q_\beta(-1)<q_\beta(1)<\varphi_{\scriptscriptstyle {Ne}}(1).
		\end{align}
		Simplifying the condition \eqref{N&S-Cond-Crscnt-Impl-Neph-Part-a-SRN}, we obtain the following two inequalities
		\begin{align*}
		\beta\geq\frac{3\left(2-\sqrt{2}+\log\left(1+\sqrt{2}\right)-\log2\right)}{2}=\beta_1
		\end{align*}
		and
		\begin{align*}
		\beta\geq\frac{3\left(\sqrt{2}-\log \left(1+\sqrt{2}\right)+\log2\right)}{2}=\beta_2.
		\end{align*}
	    Thus, for the subordination $q_\beta\prec\varphi_{\scriptscriptstyle{Ne}}$ to hold true, it is necessary that $\beta\geq\max\{\beta_1,\beta_2\}=\beta_2$. Moreover, the image of $\mathbb{D}$ under the function $q_\beta$ completely lies in the interior of the region bounded by the nephroid \eqref{Equation-of-Nephroid} whenever $\beta\geq\beta_2$, see \Cref{Fig-Lune-Impl-Neph-SRN} (curve 1). Now, the univalency of the function $\varphi_{\scriptscriptstyle{Ne}}(z)$ leads us to conclude that $q_\beta\prec\varphi_{\scriptscriptstyle{Ne}}$ if, and only if, $\beta\geq\beta_2$. Moreover, for $\beta=\beta_2$, the value of $q_\beta(z)$ at $z=1$ is $5/3$. This shows that the estimate on $\beta$ can not be decreased further.	
\item \label{Proof-Thrm-Crescent-Impl-Neph-Part-b-SRN}
		Clearly the analytic function
		\begin{align*}
		\hat{q}_\beta(z)
		 =\exp\left(\frac{1}{\beta}\Big(\varphi_{\scriptscriptstyle{\leftmoon}}(z)-1-\log\left(1+\sqrt{1+z^2}\right)+\log2\Big)\right)
		\end{align*}
		satisfies $1+\beta{z\hat{q}'_\beta/\hat{q}_\beta}=\varphi_{\scriptscriptstyle{\leftmoon}}(z)$. Let $\vartheta(\xi)=1$ and $\lambda(\xi)=\beta/\xi$, so that $\Theta(z)=\varphi_{\scriptscriptstyle{\leftmoon}}(z)-1$ and $h(z)=\varphi_{\scriptscriptstyle{\leftmoon}}(z)$. Thus, $\Theta$ is starlike and $\mathrm{Re}\left(zh'(z)/\Theta(z)\right)>0$. In view of \Cref{Lemma-3.4h-p132-Miller-Mocanu}, we have
		\begin{align*}
		1+\beta{zp'}/{p}\prec1+\beta{z\hat{q}'_\beta}/{\hat{q}_\beta} \implies p\prec{\hat{q}_\beta}.
		\end{align*} 		 
         To prove $p\prec\varphi_{\scriptscriptstyle{Ne}}$, it only remains to show that $\hat{q}_\beta\prec\varphi_{\scriptscriptstyle{Ne}}$. The later subordination is true if, and only if, $1/3<\hat{q}_\beta(-1)<\hat{q}_\beta(1)<5/3$. This condition on further simplification shows that $\hat{q}_\beta\prec\varphi_{\scriptscriptstyle{Ne}}$ provided 
		\begin{align*}
		\beta\geq\max&\left\{-\frac{\sqrt{2}-2+\log2-\log \left(1+\sqrt{2}\right)}{\log3},
		\frac{\sqrt{2}+\log2-\log\left(1+\sqrt{2}\right)}{\log (5/3)}\right\}\\
		&=\frac{\sqrt{2}+\log2-\log \left(1+\sqrt{2}\right)}{\log (5/3)}=\beta_0.
		\end{align*}
		Furthermore, a simple verification shows that $\hat{q}_\beta(1)=5/3$ for $\beta=\beta_0$. This proves that the lower bound $\beta_0$ on $\beta$ is sharp. See \Cref{Fig-Lune-Impl-Neph-SRN} (curve 2).
		
\item \label{Proof-Thrm-Crescent-Impl-Neph-Part-c-SRN}
		The function $\tilde{q}_\beta$ defined on $\overline{\mathbb{D}}$ by
		\begin{align*}
		\tilde{q}_\beta(z)
		 =\left(1-\frac{1}{\beta}\left(\varphi_{\scriptscriptstyle{\leftmoon}}(z)-1-\log\left(1+\sqrt{1+z^2}\right)+\log2\right)\right)^{-1}
		\end{align*}
		is an analytic solution of $1+\beta{z\tilde{q}'_\beta/\tilde{q}^2_\beta}=\varphi_{\scriptscriptstyle{\leftmoon}}(z)$. Defining $\vartheta(\xi)=1$ and $\lambda(\xi)=\beta/\xi^2$ we have $\Theta(z)=\varphi_{\scriptscriptstyle{\leftmoon}}(z)-1$ and $h(z)=\varphi_{\scriptscriptstyle{\leftmoon}}(z)$. Using the geometric properties of $\varphi_{\scriptscriptstyle{\leftmoon}}(z)$, we see that the conditions in the hypothesis of \Cref{Lemma-3.4h-p132-Miller-Mocanu} are satisfied. Therefore, the first-order differential subordination $1+\beta{zp'}/{p^2}\prec1+\beta{z\tilde{q}'_\beta}/{\tilde{q}^2_\beta}$ implies the subordination $p\prec{\tilde{q}_\beta}$.     		
		The result $p\prec\varphi_{\scriptscriptstyle{Ne}}$ will now follow by showing $\tilde{q}_\beta\prec\varphi_{\scriptscriptstyle{Ne}}$, which is true if, and only if, $1/3<\tilde{q}_\beta(-1)<\tilde{q}_\beta(1)<5/3$. As earlier, this condition gets satisfied if $\beta\geq\max\left\{\beta_1,\beta_2\right\}=\beta_2$, where
		\begin{align*}
		\beta_1=-\frac{\left(\sqrt{2}-2-\log \left(1+\sqrt{2}\right)+\log2\right)}{2}
		\end{align*}
		and
		\begin{align*}
		\beta_2=\frac{5\left(\sqrt{2}-\log \left(1+\sqrt{2}\right)+\log2\right)}{2}.
		\end{align*}
		The fact that $\tilde{q}_{\beta_2}(1)=5/3$ proves that the value $\beta=\beta_2$ is best possible, see \Cref{Fig-Lune-Impl-Neph-SRN} (curve 3).
\qedhere
	\end{enumerate}
\end{proof}
On fixing $p(z)=zf'(z)/f(z)$ in \Cref{Thrm-Crescent-Impl-Neph-SRN}, the following sufficient conditions for $\mathcal{S}^*_{Ne}$ follow.
\begin{corollary}
	Let $f\in\mathcal{A}$ and let $\mathcal{G}(z)$ be defined by \eqref{Definition-J}. Then each of the following conditions sufficiently implies that $f$ is a member of $\mathcal{S}^*_{Ne}$.
	\begin{enumerate}[\rm(a)]
		\item 
		$1+\beta\mathcal{G}(z)({zf'}/{f})\prec\varphi_{\scriptscriptstyle{\leftmoon}}(z)$ for $\beta\geq\frac{3\left(\sqrt{2}-\log \left(1+\sqrt{2}\right)+\log2\right)}{2}$,
		\item 
		$1+\beta{\mathcal{G}(z)}\prec\varphi_{\scriptscriptstyle{\leftmoon}}(z)$ for $\beta\geq\frac{\sqrt{2}+\log (2)-\log \left(1+\sqrt{2}\right)}{\log \left(\frac{5}{3}\right)}$,
		\item
		$1+\beta\mathcal{G}(z)\left({zf'}/{f}\right)^{-1}\prec\varphi_{\scriptscriptstyle{\leftmoon}}(z)$ for $\beta\geq\frac{5\left(\sqrt{2}-\log \left(1+\sqrt{2}\right)+\log2\right)}{2}$.
	\end{enumerate}
	Each estimate on $\beta$ is sharp.
\end{corollary}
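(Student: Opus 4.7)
The plan is to derive this corollary as an immediate specialization of \Cref{Thrm-Crescent-Impl-Neph-SRN} with the choice $p(z):=zf'(z)/f(z)$. For this $p$, we clearly have $p\in\mathcal{H}$ with $p(0)=1$, so the hypotheses of the theorem are satisfied, and the subordination $p\prec\varphi_{\scriptscriptstyle{Ne}}$ is precisely the statement that $f\in\mathcal{S}^*_{Ne}$.

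The only work is to rewrite the three left-hand sides of \Cref{Thrm-Crescent-Impl-Neph-SRN} in terms of $f$. Taking the logarithmic derivative of $p(z)=zf'(z)/f(z)$ and multiplying by $z$ gives
\begin{align*}
\frac{zp'(z)}{p(z)}=1+\frac{zf''(z)}{f'(z)}-\frac{zf'(z)}{f(z)}=\mathcal{G}(z),
\end{align*}
by the definition \eqref{Definition-J} of $\mathcal{G}$. Hence
\begin{align*}
zp'(z)=\mathcal{G}(z)\cdot\frac{zf'(z)}{f(z)}, \qquad \frac{zp'(z)}{p(z)}=\mathcal{G}(z), \qquad \frac{zp'(z)}{p^2(z)}=\mathcal{G}(z)\left(\frac{zf'(z)}{f(z)}\right)^{-1}.
\end{align*}

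Substituting these three identities into parts (a), (b), (c) of \Cref{Thrm-Crescent-Impl-Neph-SRN}, respectively, the differential subordination on the left transforms into the corresponding condition displayed in the corollary, and the conclusion $p\prec\varphi_{\scriptscriptstyle{Ne}}$ becomes $f\in\mathcal{S}^*_{Ne}$. The sharp bounds on $\beta$ are transcribed verbatim from \Cref{Thrm-Crescent-Impl-Neph-SRN}, and sharpness is inherited from that theorem since the extremal function $p_\beta$ constructed there can be realized as $zf'/f$ for a suitable $f\in\mathcal{A}$ (by integrating the ODE $zf'/f=p_\beta(z)$ with initial condition $f(0)=0$, $f'(0)=1$). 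There is no real obstacle here: the computation $zp'/p=\mathcal{G}$ is the only substantive step, and the corollary reduces to bookkeeping.
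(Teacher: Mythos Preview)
Your proposal is correct and follows exactly the paper's approach: the paper simply states that the corollary follows by fixing $p(z)=zf'(z)/f(z)$ in \Cref{Thrm-Crescent-Impl-Neph-SRN}, and you carry out precisely this substitution, additionally supplying the logarithmic-derivative computation $zp'/p=\mathcal{G}$ and a justification of sharpness that the paper leaves implicit.
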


\begin{minipage}{0.5\textwidth}
	\centering
	\includegraphics[scale=0.85]{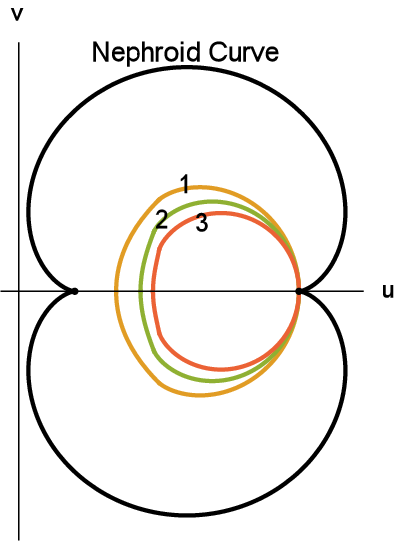}
\end{minipage}
\begin{minipage}{0.4\textwidth}
	  \small{
	  	1: Boundary of $q_\beta(\mathbb{D})$ with\\ 
	     $\beta=\frac{3\left(\sqrt{2}-\log \left(1+\sqrt{2}\right)+\log2\right)}{2}$.\\
	     \\
	2: Boundary of $\hat{q}_\beta(\mathbb{D})$ with\\
	 $\beta=\frac{\sqrt{2}+\log (2)-\log \left(1+\sqrt{2}\right)}{\log \left(\frac{5}{3}\right)}$.\\
	\\
	3: Boundary of $\tilde{q}_\beta(\mathbb{D})$ with\\
	 $\beta=\frac{5\left(\sqrt{2}-\log \left(1+\sqrt{2}\right)+\log2\right)}{2}$.
}
\end{minipage}
\captionof{figure}{The functions $q_\beta,\,\hat{q}_\beta$ and $\tilde{q}_\beta$ are defined in
(\ref{Proof-Thrm-LemB-Impl-Neph-Part-a-SRN}), (\ref{Proof-Thrm-LemB-Impl-Neph-Part-b-SRN}) and
(\ref{Proof-Thrm-LemB-Impl-Neph-Part-c-SRN}) of \Cref{Thrm-Crescent-Impl-Neph-SRN}.}
\label{Fig-Lune-Impl-Neph-SRN}
\noindent

\begin{theorem}
	Let $\varphi_{\scriptscriptstyle{C}}(z):=1+4z/3+2z^2/3$. Then, for $p\in\mathcal{H}$ with $p(0)=1$, each of the following subordinations is sufficient to imply that $p\prec\varphi_{\scriptscriptstyle{Ne}}$. Moreover, each estimate on $\beta$ is sharp.
	\begin{enumerate}[\rm(a)]
		\item $1+\beta{zp'}\prec\varphi_{\scriptscriptstyle{C}}(z)$ for $\beta\geq {5}/{2}$,
		\item $1+\beta{zp'}/{p}\prec\varphi_{\scriptscriptstyle{C}}(z)$ for $\beta\geq {5}/{3\log(5/3)}\approx3.26269$,
		\item $1+\beta{zp'}/{p^2}\prec\varphi_{\scriptscriptstyle{C}}(z)$ for $\beta\geq {25}/{6}$.
	\end{enumerate}
\end{theorem}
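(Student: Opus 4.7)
The plan is to follow the three-stage scheme already established in \Cref{Thrm-LemB-Impl-Neph-SRN} and \Cref{Thrm-Crescent-Impl-Neph-SRN}, now with $\varphi_{\scriptscriptstyle{C}}(z)=1+4z/3+2z^2/3$ playing the role of the right-hand side. Because $\varphi_{\scriptscriptstyle{C}}(z)-1$ is a polynomial, the three first-order differential equations associated with $j=0,1,2$ integrate in closed form. Specifically, $1+\beta zq'_{\beta}(z)=\varphi_{\scriptscriptstyle{C}}(z)$ gives
\begin{align*}
q_\beta(z)=1+\frac{1}{\beta}\left(\frac{4z}{3}+\frac{z^2}{3}\right);
\end{align*}
$1+\beta z\hat{q}'_{\beta}(z)/\hat{q}_\beta(z)=\varphi_{\scriptscriptstyle{C}}(z)$ gives
\begin{align*}
\hat{q}_\beta(z)=\exp\!\left(\frac{1}{\beta}\left(\frac{4z}{3}+\frac{z^2}{3}\right)\right);
\end{align*}
and $1+\beta z\tilde{q}'_{\beta}(z)/\tilde{q}^2_\beta(z)=\varphi_{\scriptscriptstyle{C}}(z)$ gives
\begin{align*}
\tilde{q}_\beta(z)=\left(1-\frac{1}{\beta}\left(\frac{4z}{3}+\frac{z^2}{3}\right)\right)^{-1}.
\end{align*}

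The second stage is to apply \Cref{Lemma-3.4h-p132-Miller-Mocanu} with $\vartheta(\xi)=1$ and $\lambda(\xi)=\beta/\xi^{j}$ for $j=0,1,2$, respectively. In every case one obtains $\Theta(z)=\varphi_{\scriptscriptstyle{C}}(z)-1$ and $h(z)=\varphi_{\scriptscriptstyle{C}}(z)$. Since $\varphi_{\scriptscriptstyle{C}}(\mathbb{D})$ is starlike with respect to $1$ (the standard cardioid property from \cite{Sharma-Ravi-2016-Cardioid}), the function $\Theta$ is starlike with respect to the origin in $\mathbb{D}$, so hypothesis (ii) of the lemma is satisfied, and the analytic characterization of starlikeness automatically gives $\mathrm{Re}(zh'(z)/\Theta(z))=\mathrm{Re}(z\Theta'(z)/\Theta(z))>0$. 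Consequently the subordinations $1+\beta zp'(z)/p^{j}(z)\prec\varphi_{\scriptscriptstyle{C}}(z)$ imply $p\prec q_\beta$, $p\prec\hat{q}_\beta$ and $p\prec\tilde{q}_\beta$ in the three parts, respectively.

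The third stage is to promote these to $p\prec\varphi_{\scriptscriptstyle{Ne}}$ by transitivity, which reduces to verifying $q_\beta\prec\varphi_{\scriptscriptstyle{Ne}}$, $\hat{q}_\beta\prec\varphi_{\scriptscriptstyle{Ne}}$ and $\tilde{q}_\beta\prec\varphi_{\scriptscriptstyle{Ne}}$. As in the earlier theorems, univalence of $\varphi_{\scriptscriptstyle{Ne}}$ allows me to replace the full subordination check by the necessary boundary condition $1/3<\cdot(-1)<\cdot(1)<5/3$, provided a graphical inspection confirms that the whole image lies inside the nephroid \eqref{Equation-of-Nephroid}. Direct computation yields $q_\beta(-1)=1-1/\beta$ and $q_\beta(1)=1+5/(3\beta)$, leading to the binding constraint $\beta\ge 5/2$ in (a); $\hat{q}_\beta(-1)=e^{-1/\beta}$ and $\hat{q}_\beta(1)=e^{5/(3\beta)}$, leading to $\beta\ge 5/(3\log(5/3))$ in (b); and $\tilde{q}_\beta(-1)=\beta/(\beta+1)$ and $\tilde{q}_\beta(1)=3\beta/(3\beta-5)$, leading to $\beta\ge 25/6$ in (c). In each case the value at $z=1$ attains the nephroid boundary value $5/3$ precisely at the critical $\beta$, which establishes sharpness.

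The principal point of care, compared with \Cref{Thrm-LemB-Impl-Neph-SRN}, is that $h=\varphi_{\scriptscriptstyle{C}}$ is \emph{not} convex, so the convexity route (hypothesis (i) of \Cref{Lemma-3.4h-p132-Miller-Mocanu}) is unavailable; the argument must instead rely on the starlikeness of $\varphi_{\scriptscriptstyle{C}}-1$ via hypothesis (ii), as already built into the setup above. The only remaining non-routine step is the graphical confirmation that $q_\beta(\mathbb{D})$, $\hat{q}_\beta(\mathbb{D})$ and $\tilde{q}_\beta(\mathbb{D})$ lie wholly inside the nephroid once the two extreme real-axis values are pinned down, mirroring what was done in \Cref{Fig1-LB-Impl-Neph-SRN} and \Cref{Fig-Lune-Impl-Neph-SRN} in the preceding theorems.
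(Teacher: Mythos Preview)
Your proposal is correct and follows essentially the same approach as the paper: the same explicit solutions $q_\beta$, $\hat q_\beta$, $\tilde q_\beta$, the same application of \Cref{Lemma-3.4h-p132-Miller-Mocanu} via the starlikeness of $\varphi_{\scriptscriptstyle{C}}(z)-1=4z/3+2z^2/3$ (rather than convexity of $h$), and the same reduction to the real-axis inequalities $1/3<\cdot(-1)<\cdot(1)<5/3$ yielding $\max\{3/2,5/2\}$, $\max\{1/\log 3,\,5/(3\log(5/3))\}$ and $\max\{1/2,25/6\}$, with sharpness at $z=1$. The paper simply refers back to \Cref{Thrm-Crescent-Impl-Neph-SRN} for the details you have spelled out.
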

\begin{proof}
	\begin{enumerate}[(a):]
\item
		Consider the first order linear differential equation
		\begin{align}\label{Diff-Eqn-Cardioid-SRN}
	1+\beta{zq'_{\beta}(z)}=\varphi_{\scriptscriptstyle{C}}(z),
		\end{align}
	A simple calculation shows that the function $q_{\beta}(z)=1+{z(4+z)}/{3\beta}$ defined on $\overline{\mathbb{D}}$ is an analytic solution of \eqref{Diff-Eqn-Cardioid-SRN}. Proceeding as \Cref{Thrm-Crescent-Impl-Neph-SRN}\eqref{Proof-Thrm-Crescent-Impl-Neph-Part-a-SRN},
	 to get $\Theta(z)=zq'_{\beta}(z)\lambda(q_{\beta}(z))={4z}/{3}z+{2z^2}/{3}$ and $h(z)=\vartheta(q_\beta(z))+\Theta(z)=1+\Theta(z)$. Since $4z/3+2z^2/3$ is starlike, the function $\Theta$ is starlike, and hence $\mathrm{Re}\left(zh'/\Theta\right)=\mathrm{Re}\left(z\Theta'/\Theta\right)$ is positive in $\mathbb{D}$. Therefore, by \Cref{Lemma-3.4h-p132-Miller-Mocanu}, the first-order differential subordination $1+\beta{zp'}\prec\varphi_{\scriptscriptstyle{C}}=1+\beta{zq'_{\beta}}$ implies $p\prec{q_{\beta}}$. Now the claimed subordination  $p\prec\varphi_{\scriptscriptstyle{Ne}}$ holds if $q_{\beta}\prec\varphi_{\scriptscriptstyle{Ne}}$ holds true, which is possible if, and only if, $\varphi_{\scriptscriptstyle {Ne}}(-1)<q_\beta(-1)<q_\beta(1)<\varphi_{\scriptscriptstyle {Ne}}(1)$. Simplifying this condition, we see that $q_{\beta}\prec\varphi_{\scriptscriptstyle{Ne}}$ if $\beta\geq\max\left\{3/2,5/2\right\}=5/2$.
\item 
		Taking the function $\hat{q}_{\beta}:\overline{\mathbb{D}}\to\mathbb{C}$ as
		\begin{align*}
		\hat{q}_{\beta}(z)=\exp\left(\frac{z(4+z)}{3\beta}\right),
		\end{align*}
		and proceeding as in \Cref{Thrm-Crescent-Impl-Neph-SRN}(\ref{Proof-Thrm-Crescent-Impl-Neph-Part-b-SRN}) leads to the desired conclusion.
\item 
		Considering the function $\tilde{q}_\beta:\overline{\mathbb{D}}\to\mathbb{C}$ defined by
		\begin{align*}
		\tilde{q}_{\beta}(z)=\left(1-\frac{z(4+z)}{3\beta}\right)^{-1},
		\end{align*}
		and following the proof of  \Cref{Thrm-Crescent-Impl-Neph-SRN}(\ref{Proof-Thrm-Crescent-Impl-Neph-Part-c-SRN}) completes the proof.
		\qedhere
	\end{enumerate}
\end{proof}

\begin{theorem}\label{Thrm-Rational-Impl-Neph-SRN}
	Let $p\in\mathcal{H}$ with $p(0)=1$, and let
	\begin{align*}
	\varphi_{\scriptscriptstyle{0}}(z):=1+\frac{z}{k}\left(\frac{k+z}{k-z}\right),\quad k=\sqrt{2}+1.
	\end{align*}
	If any one of the following differential subordinations hold true, then $p\prec\varphi_{\scriptscriptstyle {Ne}}$. Each of the respective bounds on $\beta$ is best possible.
	\begin{enumerate}[\rm(a)]
		\item 
		$1+\beta{zp'}\prec\varphi_{\scriptscriptstyle{0}}$ for
		$\beta\geq 3\log\left(1+\frac{1}{\sqrt{2}}\right)-\frac{3}{2}\left(\sqrt{2}-1\right)\approx0.98308$,
		\item $1+\beta{zp'}/{p}\prec\varphi_{\scriptscriptstyle{0}}$ for
		$\beta\geq\frac{2 \left(\log \left(\frac{1+\sqrt{2}}{\sqrt{2}}\right)-\frac{1}{2 \left(1+\sqrt{2}\right)}\right)}{\log \left(\frac{5}{3}\right)}\approx1.28299$,
		\item $1+\beta{zp'}/{p^2}\prec\varphi_{\scriptscriptstyle{0}}$ for
		$\beta\geq5\left(\log\left(\frac{1+\sqrt{2}}{\sqrt{2}}\right)-\frac{1}{2 \left(1+\sqrt{2}\right)}\right)\approx1.63847$.
	\end{enumerate}
\end{theorem}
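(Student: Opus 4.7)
The plan is to mirror the three-case template already executed in \Cref{Thrm-LemB-Impl-Neph-SRN}, \Cref{Thrm-ShftdLemB-Impl-Neph-SRN} and \Cref{Thrm-Crescent-Impl-Neph-SRN}. For each $j\in\{0,1,2\}$ I will (i) construct an analytic dominant on $\overline{\mathbb{D}}$ by solving the appropriate first-order differential equation with $\varphi_{\scriptscriptstyle{0}}$ on the right-hand side, (ii) invoke \Cref{Lemma-3.4h-p132-Miller-Mocanu} to upgrade the hypothesized subordination to $p$ being subordinate to this dominant, and (iii) pin down the sharp $\beta$ for which the dominant is itself subordinate to $\varphi_{\scriptscriptstyle{Ne}}$. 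Transitivity of $\prec$ will then yield the claim.

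The key algebraic input is the partial-fraction identity $(\varphi_{\scriptscriptstyle{0}}(z)-1)/z=(k+z)/(k(k-z))=-1/k+2/(k-z)$, which integrates in closed form to $I(z):=-z/k-2\log(1-z/k)$. Writing
\begin{equation*}
q_\beta(z)=1+\tfrac{1}{\beta}I(z),\qquad \hat{q}_\beta(z)=\exp\!\bigl(\tfrac{1}{\beta}I(z)\bigr),\qquad \tilde{q}_\beta(z)=\bigl(1-\tfrac{1}{\beta}I(z)\bigr)^{-1},
\end{equation*}
gives the three dominants. Each is analytic on $\overline{\mathbb{D}}$ because $k=\sqrt{2}+1>1$ forces $1-z/k$ to avoid the negative real axis, and by construction they satisfy $1+\beta zq'_\beta=\varphi_{\scriptscriptstyle 0}$, $1+\beta z\hat{q}'_\beta/\hat{q}_\beta=\varphi_{\scriptscriptstyle 0}$ and $1+\beta z\tilde{q}'_\beta/\tilde{q}^2_\beta=\varphi_{\scriptscriptstyle 0}$ respectively.

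Applying \Cref{Lemma-3.4h-p132-Miller-Mocanu} with $\vartheta\equiv 1$ and $\lambda(\xi)=\beta,\ \beta/\xi,\ \beta/\xi^2$ in the three cases produces $\Theta(z)=\varphi_{\scriptscriptstyle{0}}(z)-1=z(k+z)/(k(k-z))$ and $h(z)=\varphi_{\scriptscriptstyle 0}(z)$ each time. Logarithmic differentiation gives $z\Theta'(z)/\Theta(z)=1+z/(k+z)+z/(k-z)$, whose real part is positive on $\mathbb{D}$ (this is precisely the starlikeness of $\Theta$ that underlies the Kumar--Ravichandran class \cite{Kumar-Ravi-2016-Starlike-Associated-Rational-Function}). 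Thus $\Theta$ is starlike and $\mathrm{Re}(zh'/\Theta)>0$, so the lemma delivers $p\prec q_\beta$, $p\prec\hat{q}_\beta$ and $p\prec\tilde{q}_\beta$ respectively.

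The main non-routine step, and the place where the three bounds are actually pinned down, is the reduction of the subordination (dominant)$\prec\varphi_{\scriptscriptstyle{Ne}}$ to the endpoint condition $1/3=\varphi_{\scriptscriptstyle{Ne}}(-1)<\text{(dominant)}(-1)<\text{(dominant)}(1)<\varphi_{\scriptscriptstyle{Ne}}(1)=5/3$. Using the identities $1/k=\sqrt{2}-1$, $1+1/k=\sqrt{2}$ and $1-1/k=2-\sqrt{2}$, the values $I(\pm 1)$ become explicit; since $|I(1)|>|I(-1)|$, the binding endpoint is always $z=1$ and a short simplification recovers exactly the three thresholds announced in (a)--(c) (with sharpness witnessed by the fact that the dominant attains $5/3$ at $z=1$ at the critical $\beta$). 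The residual obstacle is verifying that this endpoint condition is actually sufficient for the full subordination onto the nephroid; as in \Cref{Thrm-LemB-Impl-Neph-SRN}(a) this is confirmed by a graphical/boundary check showing that the dominant's image is contained in $\varphi_{\scriptscriptstyle{Ne}}(\mathbb{D})$ at the critical $\beta$, which is legitimate because $\varphi_{\scriptscriptstyle{Ne}}$ is univalent and the dominants agree with it at $0$.
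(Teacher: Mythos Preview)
Your proposal is correct and follows essentially the same route as the paper: the dominants $q_\beta,\hat q_\beta,\tilde q_\beta$ you build via the antiderivative $I(z)=-z/k-2\log(1-z/k)$ are exactly those the paper writes as $1+\tfrac{2}{\beta}\bigl(\log\tfrac{k}{k-z}-\tfrac{z}{2k}\bigr)$ etc., and the appeal to \Cref{Lemma-3.4h-p132-Miller-Mocanu} with $\Theta(z)=\varphi_{\scriptscriptstyle 0}(z)-1$ starlike is identical. One small remark: your sentence ``since $|I(1)|>|I(-1)|$, the binding endpoint is always $z=1$'' is literally enough only for case (a); in (b) and (c) the two endpoint inequalities carry different constants ($1/\log 3$ versus $1/\log(5/3)$, and $1/2$ versus $5/2$), so you should also note that these constants reinforce the same inequality---which they do, so the conclusion stands.
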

\begin{proof}
	\begin{enumerate}[(a):]
\item 
		Consider the analytic function 
		\begin{align*}
		q_{\beta}(z)=1+\frac{2}{\beta}\left(\log\left(\frac{k}{k-z}\right)-\frac{z}{2k}\right), \qquad z\in\overline{\mathbb{D}},
		\end{align*}
		satisfying the first-order linear differential equation $1+\beta{zq'_{\beta}}=\varphi_{\scriptscriptstyle{0}}$. Choosing $\vartheta(\xi)=1$ and $\lambda(\xi)=\beta$ in \Cref{Lemma-3.4h-p132-Miller-Mocanu}, we get $\Theta(z)=z(k+z)/k(k-z)$, which is starlike, and $h(z)=\varphi_{\scriptscriptstyle{0}}(z)$, which satisfies $\mathrm{Re}\left(zh'/\Theta\right)>0$ in $\mathbb{D}$. In light of \Cref{Lemma-3.4h-p132-Miller-Mocanu}, the differential subordination $1+\beta{zp'}\prec\varphi_{\scriptscriptstyle{0}}=1+\beta{zq'_{\beta}}$ implies $p\prec{q_{\beta}}$. Now the desired result follows if the subordination $q_{\beta}\prec\varphi_{\scriptscriptstyle {Ne}}$ holds, and, as in \Cref{Thrm-LemB-Impl-Neph-SRN}(\ref{Proof-Thrm-LemB-Impl-Neph-Part-a-SRN}), this is true if
		\begin{align*}
		\beta\geq\max\left\{
		\frac{3}{2}\left(1-\sqrt{2}+\log2\right),
		3\log\left(1+\frac{1}{\sqrt{2}}\right)-\frac{3}{2}\left(\sqrt{2}-1\right)
		\right\}.
		\end{align*}
\item 
		Taking the function $\hat{q}_{\beta}:\overline{\mathbb{D}}\to\mathbb{C}$ as
		\begin{align*}
		\hat{q}_{\beta}(z)=\exp\left(\frac{2\log\left(\frac{k}{k-z}\right)-\frac{z}{k}}{\beta}\right),
		\end{align*}
		and proceeding as in \Cref{Thrm-Crescent-Impl-Neph-SRN}(\ref{Proof-Thrm-Crescent-Impl-Neph-Part-b-SRN}) leads to the desired result $p\prec\varphi_{\scriptscriptstyle {Ne}}$.
\item 
		Noting that $\tilde{q}_\beta$ given by
		\begin{align*}
		\tilde{q}_{\beta}(z)=\left(1-\frac{2\log\left(\frac{k}{k-z}\right)-\frac{z}{k}}{\beta}\right)^{-1}, \qquad z\in\overline{\mathbb{D}}, 		\end{align*}
		is a solution of the differential equation $1+\beta{z\tilde{q}'_{\beta}}/\tilde{q}^2=\varphi_{\scriptscriptstyle{0}}$, the result can be easily established by following the steps of 
		 \Cref{Thrm-Crescent-Impl-Neph-SRN}(\ref{Proof-Thrm-Crescent-Impl-Neph-Part-c-SRN}).
\qedhere
	\end{enumerate}
\end{proof}

For $k=1+\sqrt{2}$ and $\theta\in\mathbb{R}$, we have
$$\left|\frac{k+e^{i\theta}}{k(k-e^{i\theta})}\right|\geq\frac{k-1}{k(k+1)}=3-2\sqrt{2}.$$
This shows that $|h(z)|\leq3-2\sqrt{2}$ is sufficient to conclude that $h\prec{z(k+z)/k(k-z)}$. Using this observation and the fact that for $f\in\mathcal{A}$, the function $p(z)=zf'(z)/f(z)\in\mathcal{H}$ satisfies $p(0)=1$, the following result easily follows from \Cref{Thrm-Rational-Impl-Neph-SRN}.
\begin{corollary}
	Let $f\in\mathcal{A}$ and $\mathcal{G}(z)$ be given by \eqref{Definition-J}. If any one of the following inequalities:
	\begin{enumerate}[\rm(a)]		
		\item
		 $\left|\left(\frac{zf'(z)}{f(z)}\right)\mathcal{G}(z)\right|\leq\frac{3-2\sqrt{2}}{3\log\left(\frac{1+\sqrt{2}}{\sqrt{2}}\right)-\frac{3}{2 \left(1+\sqrt{2}\right)}}\approx0.174526$
		\item
		$\left|\mathcal{G}(z)\right|\leq\frac{\left(3-2\sqrt{2}\right)\log\left(5/3\right)}{2\left(\log \left(\frac{1+\sqrt{2}}{\sqrt{2}}\right)-\frac{1}{2 \left(1+\sqrt{2}\right)}\right)}\approx0.133728$ 		
		\item $\left|\left(\frac{zf'(z)}{f(z)}\right)^{-1}\mathcal{G}(z)\right|\leq\frac{3-2\sqrt{2}}{5\left(\log\left(\frac{1+\sqrt{2}}{\sqrt{2}}\right)-\frac{1}{2 \left(1+\sqrt{2}\right)}\right)}\approx0.104716$.
	\end{enumerate}
	holds true, then $f\in\mathcal{S}^*_{Ne}$.
\end{corollary}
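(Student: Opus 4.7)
The plan is to reduce the three assertions to \Cref{Thrm-Rational-Impl-Neph-SRN} via the substitution $p(z)=zf'(z)/f(z)$ combined with the moduli observation displayed immediately before the corollary. First I would record the elementary logarithmic-derivative identity: for $p(z)=zf'(z)/f(z)$, differentiating $\log p$ yields
\begin{align*}
\frac{zp'(z)}{p(z)}=1+\frac{zf''(z)}{f'(z)}-\frac{zf'(z)}{f(z)}=\mathcal{G}(z).
\end{align*}
Hence $zp'(z)=(zf'/f)\mathcal{G}(z)$ and $zp'(z)/p^2(z)=\mathcal{G}(z)(zf'/f)^{-1}$, and each of these three expressions vanishes at $z=0$ (using $zf'/f\to 1$ and $\mathcal{G}(0)=0$), which is what is needed to make the subordination legitimate.

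Substituting $p=zf'/f$ into the three differential subordinations of \Cref{Thrm-Rational-Impl-Neph-SRN}, each takes the common shape
\begin{align*}
1+\beta H(z)\prec\varphi_{\scriptscriptstyle 0}(z)=1+\frac{z(k+z)}{k(k-z)},\qquad k=1+\sqrt{2},
\end{align*}
where $H(z)$ is respectively $(zf'/f)\mathcal{G}(z)$ for $j=0$, $\mathcal{G}(z)$ for $j=1$, and $\mathcal{G}(z)(zf'/f)^{-1}$ for $j=2$, and $\beta$ is the corresponding sharp lower bound supplied by the theorem. Thus the required subordination is $\beta H\prec z(k+z)/(k(k-z))$.

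Next I would invoke the moduli observation: since $|(k+e^{i\theta})/(k(k-e^{i\theta}))|\geq (k-1)/(k(k+1))=3-2\sqrt{2}$ for every real $\theta$, the image $\{z(k+z)/(k(k-z)):z\in\mathbb{D}\}$ contains the disk $\{|w|<3-2\sqrt{2}\}$. Because $\beta H$ is analytic in $\mathbb{D}$ with $\beta H(0)=0$, the pointwise estimate $|\beta H(z)|\leq 3-2\sqrt{2}$ places $(\beta H)(\mathbb{D})$ inside that disk and therefore inside $\varphi_{\scriptscriptstyle 0}(\mathbb{D})-1$, which yields $\beta H\prec z(k+z)/(k(k-z))$, i.e., $1+\beta H\prec\varphi_{\scriptscriptstyle 0}$. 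Then \Cref{Thrm-Rational-Impl-Neph-SRN} gives $zf'(z)/f(z)\prec\varphi_{\scriptscriptstyle Ne}(z)$, which is exactly $f\in\mathcal{S}^*_{Ne}$. Finally, dividing the bound $\beta|H(z)|\leq 3-2\sqrt{2}$ by the sharp $\beta$ from each part of the theorem reproduces the three explicit constants in (a), (b), (c).

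I do not anticipate a substantive obstacle: the argument is a bookkeeping reduction that depends only on the logarithmic-derivative identity, the observation that converts a modulus bound into a subordination to $z(k+z)/(k(k-z))$, and the already-proved \Cref{Thrm-Rational-Impl-Neph-SRN}. The only place to be careful is aligning which $H$ goes with $j=0,1,2$ and transcribing the numerical values $(3-2\sqrt{2})/\beta$ correctly for each case.
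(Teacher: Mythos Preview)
Your proposal is correct and follows essentially the same approach the paper sketches: the paper does not give a detailed proof of this corollary but simply states that it ``easily follows from \Cref{Thrm-Rational-Impl-Neph-SRN}'' using the modulus observation $|h(z)|\le 3-2\sqrt{2}\Rightarrow h\prec z(k+z)/(k(k-z))$ together with the substitution $p(z)=zf'(z)/f(z)$. You have filled in exactly those details, including the logarithmic-derivative identity $zp'/p=\mathcal{G}$ that matches $H$ to $j=0,1,2$, so there is nothing to add.
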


\begin{theorem}
	Let $\varphi_{\scriptscriptstyle{S}}(z):=1+\sin{z}$, and let $p\in\mathcal{H}$ satisfies $p(0)=1$. Each of the following subordinations imply $p\prec\varphi_{\scriptscriptstyle{Ne}}$:
	\begin{enumerate}[\rm(a)]
		\item 
		$1+\beta{zp'}\prec\varphi_{\scriptscriptstyle{S}}(z)$ for $\beta\geq\frac{3}{2}\sum_{n=0}^{\infty}\frac{(-1)^n}{(2n+1)!\times(2n+1)}\approx1.41912$,
		\item 
		$1+\beta{zp'}/{p}\prec\varphi_{\scriptscriptstyle{S}}(z)$ for $\beta\geq\frac{\sum_{n=0}^{\infty}\frac{(-1)^n}{(2n+1)!\times(2n+1)}}{\log(5/3)}\approx1.85207$,
		\item 
		$1+\beta{zp'}/{p^2}\prec\varphi_{\scriptscriptstyle{S}}(z)$ for $\beta\geq\frac{5}{2}\sum_{n=0}^{\infty}\frac{(-1)^n}{(2n+1)!\times(2n+1)}\approx2.36521$.
	\end{enumerate}
	The estimates on $\beta$ cannot be improved.
\end{theorem}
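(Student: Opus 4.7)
The plan is to mirror the three-part argument used in \Cref{Thrm-LemB-Impl-Neph-SRN} and \Cref{Thrm-Crescent-Impl-Neph-SRN}, with $\varphi_{\scriptscriptstyle{L}}$ (or $\varphi_{\scriptscriptstyle{\leftmoon}}$) replaced by $\varphi_{\scriptscriptstyle{S}}(z)=1+\sin z$. For each $j\in\{0,1,2\}$, I would construct an analytic dominant on $\overline{\mathbb{D}}$ solving $1+\beta z q'/q^{j}=\varphi_{\scriptscriptstyle{S}}$, obtained by termwise integration of $\sin t/t$. Setting $S:=\sum_{n=0}^{\infty}\frac{(-1)^{n}}{(2n+1)!(2n+1)}$ and $I(z):=\sum_{n=0}^{\infty}\frac{(-1)^{n}z^{2n+1}}{(2n+1)!(2n+1)}$, the three candidate dominants are
\begin{align*}
q_{\beta}(z)=1+\frac{I(z)}{\beta},\qquad \hat{q}_{\beta}(z)=\exp\!\left(\frac{I(z)}{\beta}\right),\qquad \tilde{q}_{\beta}(z)=\left(1-\frac{I(z)}{\beta}\right)^{\!-1},
\end{align*}
for parts (a), (b), (c) respectively.

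Next I would apply \Cref{Lemma-3.4h-p132-Miller-Mocanu} with $\vartheta(\xi)=1$ and $\lambda(\xi)=\beta,\ \beta/\xi,\ \beta/\xi^{2}$ in the three cases. In each case this forces $\Theta(z)=\sin z$ and $h(z)=1+\sin z$. Verifying the hypotheses of the lemma amounts to showing that $\sin z$ is starlike in $\mathbb{D}$ and that $\mathrm{Re}\bigl(zh'(z)/\Theta(z)\bigr)=\mathrm{Re}(z\cot z)>0$ on $\mathbb{D}$; both are classical facts (and precisely what underlies the class $\mathcal{S}^{*}_{s}$ studied in \cite{Cho-2019-Sine-BIMS}). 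Consequently \Cref{Lemma-3.4h-p132-Miller-Mocanu} furnishes the implications $p\prec q_{\beta}$, $p\prec\hat{q}_{\beta}$, $p\prec\tilde{q}_{\beta}$ from the respective differential subordinations.

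It then remains, by transitivity, to establish that each dominant is itself subordinate to $\varphi_{\scriptscriptstyle{Ne}}$. Exactly as in \Cref{Thrm-LemB-Impl-Neph-SRN}(a), univalence of $\varphi_{\scriptscriptstyle{Ne}}$ together with symmetry of the images about the real axis reduces this to the boundary inequalities
\begin{align*}
\tfrac{1}{3}=\varphi_{\scriptscriptstyle{Ne}}(-1)\le q_{\beta}(-1)<q_{\beta}(1)\le \varphi_{\scriptscriptstyle{Ne}}(1)=\tfrac{5}{3},
\end{align*}
supplemented by a graphical check that the image stays inside the nephroid. Since $I(\pm 1)=\pm S$, I compute $q_{\beta}(\pm 1)=1\pm S/\beta$, $\hat{q}_{\beta}(\pm 1)=\exp(\pm S/\beta)$ and $\tilde{q}_{\beta}(\pm 1)=(1\mp S/\beta)^{-1}$. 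In (a) both inequalities collapse to $\beta\ge 3S/2$; in (b), because $\log 3>\log(5/3)$, the binding constraint is $\beta\ge S/\log(5/3)$; in (c), comparing $\beta\ge 5S/2$ with $\beta\ge S/2$, the binding constraint is $\beta\ge 5S/2$. These match the stated thresholds. Sharpness is immediate: at the threshold value, the relevant dominant attains $5/3$ or $1/3$ at an endpoint, so the image of $\overline{\mathbb{D}}$ touches $\partial\varphi_{\scriptscriptstyle{Ne}}(\mathbb{D})$.

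The main obstacle is the promotion of the \emph{necessary} boundary conditions to \emph{sufficient} containment $q_{\beta}(\mathbb{D})\subset\varphi_{\scriptscriptstyle{Ne}}(\mathbb{D})$; as in the earlier theorems this step is justified by a figure displaying the two boundary curves, using that each dominant's image is a real-symmetric convex-like region whose extremes on the real axis are controlled by $q_{\beta}(\pm 1)$ and whose vertical extent at any abscissa $\ge 1/3$ is dwarfed by the vertical extent of the nephroid at the same abscissa.
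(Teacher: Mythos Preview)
Your proposal is correct and follows essentially the same approach as the paper: the paper constructs the same three dominants $q_\beta,\hat q_\beta,\tilde q_\beta$ built from the sine-integral series $I(z)$, invokes \Cref{Lemma-3.4h-p132-Miller-Mocanu} with $\Theta(z)=\sin z$ starlike (referring back to the argument of \Cref{Thrm-Crescent-Impl-Neph-SRN}), and then reduces $q_\beta\prec\varphi_{\scriptscriptstyle{Ne}}$ to the real-axis endpoint inequalities together with a graphical containment check. Your write-up is in fact slightly more explicit than the paper's, which merely cites the earlier proofs for parts (b) and (c).
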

\begin{proof}
	\begin{enumerate}[(a):]
\item 
		Consider the first-order linear differential equation $1+\beta{zq'_{\beta}(z)}=\varphi_{\scriptscriptstyle{S}}(z)$. It is easy to verify that the analytic function
       $q_{\beta}(z)=1+\sum_{n=0}^{\infty}B_nz^{2n+1}$		
		is a solution of this differential equation, where
		$$B_n:=\frac{(-1)^n}{(2n+1)!\times(2n+1)}.$$ 	
		Following the proof of \Cref{Thrm-Crescent-Impl-Neph-SRN}(\ref{Proof-Thrm-Crescent-Impl-Neph-Part-a-SRN}), and noting that the function $\sin{z}$ is starlike in $\mathbb{D}$, we have the subordination implication:
		\begin{align*}
		1+\beta{zp'}\prec\varphi_{\scriptscriptstyle{S}}=1+\beta{zq'_{\beta}}\implies p\prec{q_{\beta}}.
		\end{align*}
		Again, as in \Cref{Thrm-Crescent-Impl-Neph-SRN}(\ref{Proof-Thrm-Crescent-Impl-Neph-Part-a-SRN}), the desired result $p\prec\varphi_{\scriptscriptstyle{Ne}}$ will follow if $\beta\geq {3}\sum_{n=0}^{\infty}B_n/{2}$.
\item 
        Verify that the function
        $\hat{q}_{\beta}(z)=\exp\left(\sum_{n=0}^{\infty}B_nz^{2n+1}/\beta\right)$
        satisfies the differential equation $1+\beta{z\hat{q}'_{\beta}(z)}/\hat{q}_\beta(z)=\varphi_{\scriptscriptstyle{S}}(z)$. Now follow the proof of \Cref{Thrm-Crescent-Impl-Neph-SRN}(\ref{Proof-Thrm-Crescent-Impl-Neph-Part-b-SRN}) for the rest. 
\item 
		Considering $\tilde{q}_\beta$ as
		\begin{align*}
		\tilde{q}_{\beta}(z)
		=\left(1-\frac{1}{\beta}\sum_{n=0}^{\infty}B_nz^{2n+1}\right)^{-1}, \qquad  z\in\overline{\mathbb{D}},
		\end{align*}
		and proceeding as in \Cref{Thrm-Crescent-Impl-Neph-SRN}(\ref{Proof-Thrm-Crescent-Impl-Neph-Part-c-SRN}) completes the proof.
\qedhere
	\end{enumerate}
\end{proof}

\section{Differential Subordinations Related to Janowski Class}
\label{Section-Janowski-Functions-SRN}
For $A,\,B\in[-1,1]$ with $B<A$, Janowski \cite{Janowski-1973-class-some-extremal} introduced the function class $\mathcal{P}[A,B]$ consisting of analytic functions of the form $h(z)=1+\sum_{n=1}^{\infty}c_nz^n$ satisfying $h(z)\prec\frac{1+Az}{1+Bz}$ for all $z\in\mathbb{D}$. Geometrically, $h\in\mathcal{P}[A,B]$ if, and only if, $h(0)=1$ and $h(\mathbb{D})$ is contained in the open disc having the line segment $\left[\frac{1-A}{1-B},\frac{1+A}{1+B}\right]$ as its diameter. Since $(1-A)/(1-B)\geq0$, $\mathrm{Re}\left(h(z)\right)>0$ for every $h\in\mathcal{P}[A,B]$. With certain conditions on $A$ and $B$, in this section, we find best possible lower bounds on the real $\beta$ such that the first-order differential subordination $1+\beta{zp'}/{p^j}\prec(1+Az)/(1+Bz)$ implies $p\prec\varphi_{\scriptscriptstyle{Ne}}$, where $j=0,1,2$.

\begin{theorem}\label{Thrm-Janowski-Impl-Neph-SRN}
	Let $-1<B<A\leq1,\,B\neq0$, and let $p\in\mathcal{H}$ satisfies $p(0)=1$. Then each of the following differential subordinations sufficiently ensures the subordination $p\prec\varphi_{\scriptscriptstyle{Ne}}$. Moreover, the respective estimates on the real $\beta$ are best possible. 
	\begin{enumerate}[\rm(a)]
		\item
		$1+\beta{zp'(z)}\prec\frac{1+Az}{1+Bz}$ for $\beta\geq\max\left\{\beta_1,\beta_2\right\}$, where
		\begin{align*}
		\beta_1=\frac{A-B}{2B}\log(1-B)^{-3} \text{ and } \beta_2=\frac{A-B}{2B}\log(1+B)^{3}.
		\end{align*}
		\item
		$1+\beta\left(\frac{zp'(z)}{p(z)}\right)\prec\frac{1+Az}{1+Bz}$ for $\beta\geq\max\left\{\beta_1,\beta_2\right\}$, where
		\begin{align*}
		\beta_1=\frac{A-B}{B\log3}\log(1-B)^{-1} \text{ and } \beta_2=\frac{A-B}{B\log(5/3)}\log(1+B).
		\end{align*}
		\item
		$1+\beta\left(\frac{zp'(z)}{p^2(z)}\right)\prec\frac{1+Az}{1+Bz}$ for $\beta\geq\max\left\{\beta_1,\beta_2\right\}$, where
		\begin{align*}
		\beta_1=\frac{A-B}{2B}\log(1-B)^{-1} \text{ and } \beta_2=\frac{A-B}{2B}\log(1+B)^{5}.
		\end{align*}
	\end{enumerate}
\end{theorem}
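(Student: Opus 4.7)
The plan is to mirror exactly the strategy of the preceding theorems: for each $j\in\{0,1,2\}$, explicitly solve the first-order linear ODE obtained by replacing $\prec$ with $=$ on the right-hand side, invoke \Cref{Lemma-3.4h-p132-Miller-Mocanu} to deduce $p\prec q$, and then reduce the subordination $q\prec\varphi_{\scriptscriptstyle{Ne}}$ to a two-point check at $z=\pm 1$.

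The first step integrates the three ODEs (principal branch of $\log$) to produce the analytic solutions on $\overline{\mathbb{D}}$
\begin{align*}
q_\beta(z)&=1+\frac{A-B}{\beta B}\log(1+Bz),\\
\hat q_\beta(z)&=(1+Bz)^{(A-B)/(\beta B)},\\
\tilde q_\beta(z)&=\Bigl(1-\frac{A-B}{\beta B}\log(1+Bz)\Bigr)^{-1}
\end{align*}
of $1+\beta zq'_\beta=(1+Az)/(1+Bz)$, $1+\beta z\hat q'_\beta/\hat q_\beta=(1+Az)/(1+Bz)$ and $1+\beta z\tilde q'_\beta/\tilde q^2_\beta=(1+Az)/(1+Bz)$, respectively. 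The second step applies \Cref{Lemma-3.4h-p132-Miller-Mocanu} with $\vartheta(\xi)\equiv 1$ and $\lambda(\xi)=\beta,\,\beta/\xi,\,\beta/\xi^2$ in the three cases. All three choices collapse the auxiliary data to the same pair
\begin{align*}
\Theta(z)=\frac{(A-B)z}{1+Bz},\qquad h(z)=\frac{1+Az}{1+Bz}.
\end{align*}
Since $|B|<1$, the Möbius map $\Theta$ sends $\mathbb{D}$ onto a Euclidean disc and fixes the origin, hence is starlike; moreover $zh'(z)/\Theta(z)=z\Theta'(z)/\Theta(z)=1/(1+Bz)$ has positive real part on $\mathbb{D}$. \Cref{Lemma-3.4h-p132-Miller-Mocanu} therefore yields $p\prec q_\beta$ in (a), and the analogous subordinations $p\prec\hat q_\beta$, $p\prec\tilde q_\beta$ in (b),(c).

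By transitivity it remains to establish $q_\beta\prec\varphi_{\scriptscriptstyle{Ne}}$ and its two analogues. The necessary endpoint condition $1/3=\varphi_{\scriptscriptstyle{Ne}}(-1)\le q(-1)<q(1)\le\varphi_{\scriptscriptstyle{Ne}}(1)=5/3$ becomes, in case (a), the two scalar inequalities $\tfrac{A-B}{\beta B}\log(1-B)\ge-\tfrac{2}{3}$ and $\tfrac{A-B}{\beta B}\log(1+B)\le\tfrac{2}{3}$, which (after a short sign analysis splitting on $B>0$ vs.\ $B<0$, noting that the products $(A-B)\log(1\pm B)/B$ each have a sign independent of that split) are equivalent to $\beta\ge\beta_1$ and $\beta\ge\beta_2$ with the closed forms stated in the theorem. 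Parallel computations at $\hat q_\beta(\pm 1)=(1\pm B)^{(A-B)/(\beta B)}$ and $1/\tilde q_\beta(\pm 1)=1-\tfrac{A-B}{\beta B}\log(1\pm B)$ produce the $\log 3,\ \log(5/3)$ denominators and the exponents $1,\,5$ that distinguish (b) and (c). Sharpness is immediate, since $q(\pm 1)\in\{1/3,5/3\}$ at $\beta=\max\{\beta_1,\beta_2\}$ by construction.

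The main obstacle is the sufficiency direction of the endpoint test: a priori $q_\beta(\mathbb{D})\subset\varphi_{\scriptscriptstyle{Ne}}(\mathbb{D})$ is stronger than the two real-axis inequalities. Following the paper's uniform practice in \Cref{Section-Lemniscate-Regions-SRN} and \Cref{Section-Non-Convex-Cusped-Regions-SRN} (cf.\ \Cref{Fig1-LB-Impl-Neph-SRN}, \Cref{Fig-Lune-Impl-Neph-SRN}), I would close this gap by graphical inspection — the log- and power-based dominants map $\mathbb{D}$ onto almost-disc regions whose extreme real values occur at $z=\pm1$ and which fit inside the kidney-shaped $\varphi_{\scriptscriptstyle{Ne}}(\mathbb{D})$ once their horizontal extent does; a fully rigorous verification would require analysing the boundary curves $q_\beta(e^{it}),\hat q_\beta(e^{it}),\tilde q_\beta(e^{it})$ against the nephroid equation \eqref{Equation-of-Nephroid}.
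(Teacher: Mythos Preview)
Your proposal is correct and follows essentially the same route as the paper: the same three explicit dominants $q_\beta,\hat q_\beta,\tilde q_\beta$, the same choices $\vartheta\equiv1$ and $\lambda(\xi)=\beta,\beta/\xi,\beta/\xi^2$ leading to $\Theta(z)=(A-B)z/(1+Bz)$ and $h=1+\Theta$, and the same reduction to the real-axis endpoint inequalities $1/3\le q(-1)<q(1)\le 5/3$. You are in fact slightly more explicit than the paper about the sign analysis and about the status of the sufficiency of the endpoint test (which the paper, as you note, handles by graphical inspection throughout).
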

\begin{proof}
	\begin{enumerate}[(a):]
\item
	It can be easily verified that the analytic function $q_{\beta}(z)$ given by	
		\begin{align*}
		q_{\beta}(z)=1+\frac{A-B}{B\beta}\log(1+Bz), \quad  z\in\overline{\mathbb{D}},
		\end{align*}
		is a solution of $1+\beta{zq'_{\beta}(z)}=(1+Az)/(1+Bz)$. On defining
		  $\vartheta(\xi)=1$ and $\lambda(\xi)=\beta$, the functions $\Theta,h$ defined in \Cref{Lemma-3.4h-p132-Miller-Mocanu} take the form
		\begin{align*}
		\Theta(z)=zq'_{\beta}(z)\lambda(q_{\beta}(z))=\frac{(A-B)z}{1+Bz} \; \text{ and } \; h(z)=1+\Theta(z).
		\end{align*}
		Since $z\Theta'(z)/\Theta(z)=1/(1+Bz)$, so that for the given range of $B$, $\mathrm{Re}\left(z\Theta'(z)/\Theta(z)\right)>1/2>0$. This verifies that $\Theta$ is starlike in $\mathbb{D}$ and further establishes the positiveness of $\mathrm{Re}\left(zh'/\Theta\right)$ in $\mathbb{D}$. Thus, in light of \Cref{Lemma-3.4h-p132-Miller-Mocanu}, $1+\beta{zp'}\prec1+\beta{zq'_{\beta}}$ implies the subordination $p\prec{q_{\beta}}$. Now, the required result $p\prec\varphi_{\scriptscriptstyle{Ne}}$ holds if the subordination $q_{\beta}\prec\varphi_{\scriptscriptstyle{Ne}}$ holds. The subordination $q_\beta\prec\varphi_{\scriptscriptstyle{Ne}}$ holds if, and only if, $1/3<q_\beta(-1)<q_\beta(1)<5/3$. Calculation shows that this leads to the inequalities
		\begin{align*}
		\beta\geq3(B-A)\frac{\log(1-B)}{2B}=\beta_1 \text{ and } \beta\geq3(A-B){2B}\frac{\log(1+B)}{2B}=\beta_2.
		\end{align*}
		Therefore, the subordination $q_\beta\prec\varphi_{\scriptscriptstyle{Ne}}$ holds true if $\beta\geq\max\left\{\beta_1,\beta_2\right\}$.
\item
	Observe that the analytic function $\hat{q}_{\beta}:\overline{\mathbb{D}}\to\mathbb{C}$ given by
		\begin{align*}
		\hat{q}_{\beta}(z)=\exp\left(\frac{A-B}{B\beta}\log(1+Bz)\right),
		\end{align*}
		is a solution of $1+\beta{z\hat{q}'_\beta(z)/\hat{q}_\beta(z)}=(1+Az)/(1+Bz)$.
		Now proceeding as in \Cref{Thrm-LemB-Impl-Neph-SRN}(\ref{Proof-Thrm-LemB-Impl-Neph-Part-b-SRN}) leads to the desired subordination.
\item
		Considering the function $\tilde{q}_\beta:\overline{\mathbb{D}}\to\mathbb{C}$ given by
		\begin{align*}
		\tilde{q}_{\beta}(z)=\left(1-\frac{A-B}{B\beta}\log(1+Bz)\right)^{-1},
		\end{align*}
		and proceeding as in \Cref{Thrm-LemB-Impl-Neph-SRN}(\ref{Proof-Thrm-LemB-Impl-Neph-Part-c-SRN}) completes the proof.
		\qedhere
	\end{enumerate}
\end{proof}

\begin{remark}\label{Remark-Janowski-SRN}
Since
$$\left|\frac{(A-B)e^{i\theta}}{1+Be^{i\theta}}\right|\geq\frac{A-B}{1+|B|}, \qquad \theta\in\mathbb{R}$$
we conclude that
if $|g(z)|\leq(A-B)/(1+|B|)$ in $\mathbb{D}$, then $g\prec(A-B)z/(1+Bz)$.
\end{remark}
In view of the fact mentioned in \Cref{Remark-Janowski-SRN}, \Cref{Thrm-Janowski-Impl-Neph-SRN} yields the following sufficient conditions for  $\mathcal{S}^*_{Ne}$.
\begin{corollary}
	Let $f\in\mathcal{A}$ and $\mathcal{G}(z)$ be defined as in \eqref{Definition-J}. If any one of the following conditions hold true, then $f\in\mathcal{S}^*_{Ne}$.
	\begin{enumerate}[\rm(a)]
		\item
		 $\left|\left(\frac{zf'(z)}{f(z)}\right)\mathcal{G}(z)\right|\leq\frac{A-B}{1+|B|}\left(\max\left\{\beta_1,\beta_2\right\}\right)^{-1}$, where		
		\begin{align*}
		\beta_1=\frac{A-B}{2B}\log(1-B)^{-3} \text{ and } \beta_2=\frac{A-B}{2B}\log(1+B)^{3}.
		\end{align*}
		\item
		$\left|\mathcal{G}(z)\right|\leq\frac{A-B}{1+|B|}\left(\max\left\{\beta_1,\beta_2\right\}\right)^{-1}$, where
		\begin{align*}
		\beta_1=\frac{A-B}{B\log3}\log(1-B)^{-1} \text{ and } \beta_2=\frac{A-B}{B\log(5/3)}\log(1+B).
		\end{align*}
		\item $\left|\left(\frac{zf'(z)}{f(z)}\right)^{-1}\mathcal{G}(z)\right|\leq\frac{A-B}{1+|B|}\left(\max\left\{\beta_1,\beta_2\right\}\right)^{-1}$, where
		\begin{align*}
		\beta_1=\frac{A-B}{2B}\log(1-B)^{-1} \text{ and } \beta_2=\frac{A-B}{2B}\log(1+B)^{5}.
		\end{align*}
	\end{enumerate}
\end{corollary}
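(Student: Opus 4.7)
The plan is to reduce the corollary to a direct consequence of \Cref{Thrm-Janowski-Impl-Neph-SRN} together with the elementary containment noted in \Cref{Remark-Janowski-SRN}. First, given $f\in\mathcal{A}$, I set $p(z) := zf'(z)/f(z)$; then $p\in\mathcal{H}$ with $p(0)=1$, so $p$ is a legitimate candidate for \Cref{Thrm-Janowski-Impl-Neph-SRN}. A direct logarithmic differentiation of $p$ gives $zp'(z)/p(z) = \mathcal{G}(z)$ with $\mathcal{G}$ as in \eqref{Definition-J}, and hence
\begin{align*}
zp'(z) = \left(\tfrac{zf'(z)}{f(z)}\right)\mathcal{G}(z),\quad
\tfrac{zp'(z)}{p(z)} = \mathcal{G}(z),\quad
\tfrac{zp'(z)}{p^2(z)} = \left(\tfrac{zf'(z)}{f(z)}\right)^{-1}\mathcal{G}(z).
\end{align*}
This identifies the three expressions appearing in the corollary's hypotheses with the quantities $zp'/p^{j}$, $j=0,1,2$, that drive the three parts of \Cref{Thrm-Janowski-Impl-Neph-SRN}.

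Next, I rewrite the Janowski subordination $1+\beta\, zp'(z)/p^{j}(z)\prec (1+Az)/(1+Bz)$ in the equivalent form
\begin{align*}
\beta\,\tfrac{zp'(z)}{p^{j}(z)} \prec \tfrac{(A-B)z}{1+Bz}.
\end{align*}
By \Cref{Remark-Janowski-SRN}, a sufficient condition for this subordination is the pointwise bound $\bigl|\beta\, zp'(z)/p^{j}(z)\bigr|\le (A-B)/(1+|B|)$ in $\mathbb{D}$, i.e.
\begin{align*}
\left|\tfrac{zp'(z)}{p^{j}(z)}\right| \le \tfrac{A-B}{(1+|B|)\beta}.
\end{align*}
Choosing the smallest admissible $\beta$ from \Cref{Thrm-Janowski-Impl-Neph-SRN}, namely $\beta = \max\{\beta_1,\beta_2\}$ for the pair $(\beta_1,\beta_2)$ associated to each $j$, produces precisely the three inequalities stated in the corollary.

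Finally, stringing the implications together: the assumed inequality forces $\bigl(\beta\, zp'/p^j\bigr)(\mathbb{D})$ into the disk of radius $(A-B)/(1+|B|)$ around $0$, whence $1+\beta\, zp'/p^j\prec (1+Az)/(1+Bz)$ by the remark; then \Cref{Thrm-Janowski-Impl-Neph-SRN} yields $p\prec\varphi_{\scriptscriptstyle Ne}$; which, since $p(z)=zf'(z)/f(z)$, is exactly $f\in\mathcal{S}^*_{Ne}$. The only mildly delicate point is matching the three pairs $(\beta_1,\beta_2)$ in the theorem with the three cases of the corollary; everything else is substitution and the remark. There is no genuine obstacle, because both the remark and the theorem have already done the substantive work.
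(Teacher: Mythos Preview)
Your proposal is correct and follows essentially the same approach as the paper, which simply states that the corollary follows from \Cref{Thrm-Janowski-Impl-Neph-SRN} in view of \Cref{Remark-Janowski-SRN} upon setting $p(z)=zf'(z)/f(z)$. Your explicit verification that $zp'(z)/p^{j}(z)$ coincides with the three expressions involving $\mathcal{G}(z)$ is exactly the substitution the paper leaves implicit.
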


\section{Subordination Results Using Hypergeometric Functions}
\label{Section-GHGFs-SRN}
Let $a,b\in\mathbb{C}$ and $c\in\mathbb{C}\setminus\{0,-1,-2,\ldots\}$. Define
\begin{align}\label{Gaussian-HG}
F(a,b;c;z)={_2F_1}(a,b;c;z):=\sum_{j=0}^\infty\frac{(a)_j(b)_j}{j!\;(c)_j}\,z^j, \quad z\in\mathbb{D},
\end{align}
where $(x)_j$ is the Pochhammer symbol given by
\begin{align*}
(x)_j=
\begin{cases}
1, \quad j=0\\
x(x+1)(x+2)\cdots(x+j-1), \quad j\in\mathbb{N}:=\{1,2,\ldots\}.
\end{cases}
\end{align*}
The analytic function $F(a,b;c;z)$ given in \eqref{Gaussian-HG} is called the {\it Gaussian hypergeometric} function. The following properties of $F(a,b;c;z)$ will be used to prove our results. For further details, we refer to \cite{Rainville-Special-Functions-Book}.  
\begin{enumerate}[(i)]
	\item $F(a,b;c;z)$ is a solution of the differential equation
    \begin{align*}
    z(1-z)w''(z)+(c-(a+b+1)z)w'(z)-abw(z)=0.
    \end{align*}	
    \item $F(a,b;c;z)$ has a representation in terms of the gamma function
     \begin{align*}
     \Gamma(z)=\int_0^\infty t^{z-1}e^{-t}dt,\quad \mathrm{Re}(z)>0
     \end{align*} 
    as
    \begin{align}\label{Gamma-Function-Rep-Gaussian-HGF}
    F(a,b;c;z)=\frac{\Gamma(c)}{\Gamma(a)\Gamma(b)}\sum_{j=0}^\infty\frac{\Gamma(a+j)\Gamma(b+j)}{j!\;\Gamma(c+j)}\,z^j.
    \end{align}
    \item $F(a,b;c;z)$ satisfies
    \begin{align}\label{Derivative-Property-GHGF}
    F'(a,b;c;z)=\frac{ab}{c}F(a+1,b+1;c+1;z)
    \end{align}
    \item If $\mathrm{Re}\,c>\mathrm{Re}\,b>0$, then $F(a,b;c;z)$ has the following integral representation
    \begin{align}\label{Integral-Rep-Gaussian-HGF}
     F(a,b;c;z)=\frac{\Gamma(c)}{\Gamma(b)\Gamma(c-b)}\int_{0}^1\frac{t^{b-1}(1-t)^{c-b-1}}{(1-tz)^{a}}\,dt, \quad z\in\mathbb{D}.
    \end{align}
\end{enumerate}
We hereby mention that the function $zF(a,b;c;z)$ given by
\begin{align*}
zF(a,b;c;z)=z{_2F_1}(a,b;c;z)=z+\sum_{j=2}^\infty\frac{(a)_{j-1}(b)_{j-1}}{(j-1)!\;(c)_{j-1}}\,z^j, \quad z\in\mathbb{D},
\end{align*}
is known as {\it normalized} or {\it shifted} Gaussian hypergeometric function.
\subsection*{Order of Starlikeness}
Let $f\in\mathcal{A}$. The order of starlikeness (with respect to zero) of the function $f(z)$ is defined to be the number $\sigma(f)$ given by
\begin{align*}
\sigma(f):=
\inf_{z\in\mathbb{D}}\mathrm{Re}\left(\frac{zf'(z)}{f(z)}\right)\in[-\infty,1].
\end{align*}
In terms of $\sigma(f)$, we observe that $f\in\mathcal{A}$ is starlike if, and ony if, $\sigma(f)\geq0$, or precisely,
\begin{align*}
f\in\mathcal{S}^* \iff  \sigma(f) \geq 0.
\end{align*}
Related to the order of starlikeness of the modified Gaussian hypergeometric function $zF(a,b;c;z)$, K\"{u}stner
\cite{Kustner-2002-HGF-CMFT, Kustner-2007-HGF-JMAA} 
proved the following result.
\begin{lemma}[{K\"{u}stner \cite[Theorem 1 (a)]{Kustner-2007-HGF-JMAA}}]\label{Lemma-Kustner-2007-HGF-JMAA}
	If $0<a\leq b\leq c$, then 
	\begin{align*}
	1-\frac{ab}{b+c} 
	\leq \sigma\left(zF(a,b;c;z)\right)
	\leq 1-\frac{ab}{2c}.
	\end{align*}	
\end{lemma}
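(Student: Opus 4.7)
The plan is to start by writing the quantity $zf'(z)/f(z)$ for $f(z)=zF(a,b;c;z)$ in a tractable form. Using the derivative identity \eqref{Derivative-Property-GHGF}, one has
\begin{align*}
\frac{zf'(z)}{f(z)} = 1 + \frac{zF'(a,b;c;z)}{F(a,b;c;z)} = 1 + \frac{ab}{c}\cdot R(z),
\qquad R(z):=\frac{zF(a+1,b+1;c+1;z)}{F(a,b;c;z)}.
\end{align*}
So the task reduces to bounding $\mathrm{Re}(R(z))$ both above (to get a lower bound for $\sigma$) and below (for the upper bound). The hypothesis $0<a\le b\le c$ ensures that the Taylor coefficients of both $F(a,b;c;z)$ and $zF(a+1,b+1;c+1;z)$ are positive, which I would exploit via a minimum-principle argument to show that the extremum of $\mathrm{Re}(R(z))$ on $\overline{\mathbb{D}}$ is attained on the real segment $(-1,0)$. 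This reduction is crucial because it lets the entire analysis proceed on the real axis.

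For the upper bound $\sigma(zF(a,b;c;z)) \le 1 - ab/(2c)$, I would evaluate $-f'(-1)/f(-1)$ directly. Using the Euler-type integral representation \eqref{Integral-Rep-Gaussian-HGF} (valid since $c>b>0$),
\begin{align*}
\frac{ab}{c}\,R(-1) \;=\; -a\cdot\frac{\int_0^1 t^b(1-t)^{c-b-1}(1+t)^{-a-1}\,dt}{\int_0^1 t^{b-1}(1-t)^{c-b-1}(1+t)^{-a}\,dt}.
\end{align*}
Writing $t/(1+t)=1-1/(1+t)$ turns the numerator integral into a combination of moments of $(1+t)^{-1}$ against the weight $t^{b-1}(1-t)^{c-b-1}(1+t)^{-a}$. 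Comparing those moments with the Beta-type normalization forced by $c\ge b$ gives the sharp estimate $\frac{ab}{c}\,R(-1)\le -ab/(2c)$, yielding the claimed upper bound.

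For the lower bound $\sigma(zF(a,b;c;z)) \ge 1 - ab/(b+c)$, the strategy is to produce a uniform bound $\mathrm{Re}(R(z)) \ge -c/(b+c)$ on $\mathbb{D}$. The natural tool is the Gauss continued-fraction expansion
\begin{align*}
R(z) \;=\; \cfrac{z}{1 - \cfrac{g_1 z}{1 - \cfrac{g_2 z}{1 - \cdots}}},
\end{align*}
whose coefficients $g_n$ are explicit rational functions of $a,b,c$ and are nonnegative under $0<a\le b\le c$. Combined with the reduction to the real axis, this reduces matters to an elementary monotonicity check: one shows that $t\mapsto R(t)$ is decreasing on $(-1,0)$ and that the limit $R(-1)$ satisfies the required bound, again via the integral representation.

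The main obstacle, as often happens with sharp hypergeometric estimates, is the last step for the lower bound: extracting the exact constant $c/(b+c)$ (rather than some weaker bound like $c/(2b)$) from the integral ratio on the negative real axis. This demands careful use of the ordering $a\le b\le c$, most cleanly via an integration-by-parts / contiguous-relation argument that replaces $F(a+1,b+1;c+1;z)$ in terms of $F(a,b;c;z)$ and $F(a,b;c+1;z)$, which together with the positivity of coefficients produces the optimal constant.
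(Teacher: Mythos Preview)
The paper does not prove this lemma at all: it is quoted verbatim as K\"{u}stner's result \cite[Theorem 1(a)]{Kustner-2007-HGF-JMAA} and used as a black box in the proof of \Cref{Thrm-LemB-Impl-Neph-GHGF}. So there is no ``paper's own proof'' to compare your proposal against.

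That said, your outline is in the spirit of K\"{u}stner's original argument, which does rest on the Gauss continued fraction for the ratio $zF(a+1,b+1;c+1;z)/F(a,b;c;z)$ and on integral (Stieltjes/Herglotz) representations. A few points in your sketch are too loose to stand as a proof, however. First, the reduction ``the extremum of $\mathrm{Re}(R(z))$ is attained on $(-1,0)$ by a minimum-principle argument'' is not justified as stated: $R$ is a ratio of two series, not a function with positive coefficients, and the minimum principle does not apply directly. What actually drives this reduction in K\"{u}stner's work is that the nonnegativity of the continued-fraction coefficients $g_n$ yields a representation $R(z)=\int_0^1 z(1-tz)^{-1}\,d\mu(t)$ for a probability measure $\mu$, from which the real-axis extremum and the sharp constants follow simultaneously. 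Second, your derivation of the upper bound hinges on showing $R(-1)\le -1/2$; the moment comparison you describe is suggestive but not a proof, and the sharp constant again comes most cleanly from the measure representation rather than from ad hoc integral manipulations. If you want to turn this into a self-contained argument, the cleanest route is to establish the Stieltjes representation of $R$ first and read both inequalities off from it, rather than treating the two bounds by separate techniques.
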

In this section, we use \Cref{Lemma-Kustner-2007-HGF-JMAA} along with \Cref{Lemma-3.4h-p132-Miller-Mocanu} to find sharp bounds on $\beta$ so that the first-order differential subordination
\begin{align*}
p(z)+\beta zp'(z)\prec\sqrt{1+z}, \text{ or }, 1+z
\end{align*}
implies the subordination $p\prec\varphi_{\scriptscriptstyle{Ne}}$.

\begin{theorem}\label{Thrm-LemB-Impl-Neph-GHGF}
	Let $p\in\mathcal{H}$ satisfies $p(0)=1$, and let
\begin{align*}
	p(z)+\beta zp'(z)\prec\varphi_{\scriptscriptstyle{L}}(z)=\sqrt{1+z},\quad \beta>0.
	\end{align*}
	If $\beta\geq\beta_L$, then $p\prec\varphi_{\scriptscriptstyle{Ne}}$, where $\beta_L$ is the unique root of
	\begin{align*}
	 \frac{3}{\Gamma(-\frac{1}{2})}\sum_{j=0}^\infty\frac{\Gamma(-\frac{1}{2}+j)}{j!\,(1+j\beta)}-1 = 0.
	\end{align*}
	The estimate on $\beta$ is best possible.
\end{theorem}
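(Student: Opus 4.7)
The plan is to construct the best dominant $q_\beta$ of the subordination $p+\beta z p'\prec\sqrt{1+z}$ as a Gaussian hypergeometric function, apply Lemma \ref{Lemma-3.4h-p132-Miller-Mocanu} to obtain $p\prec q_\beta$, and then reduce $q_\beta\prec\varphi_{\scriptscriptstyle{Ne}}$ to a real-axis comparison. For the first step I would solve the initial value problem $q(z)+\beta zq'(z)=\sqrt{1+z}$, $q(0)=1$, via the integrating factor $z^{1/\beta}$ and the substitution $t=uz$, obtaining
\[
q_\beta(z)\;=\;\frac{1}{\beta}\int_0^1 u^{1/\beta-1}\sqrt{1+uz}\,du.
\]
By the Euler integral representation \eqref{Integral-Rep-Gaussian-HGF}, this equals $F(-\tfrac12,\tfrac1\beta;\tfrac1\beta+1;-z)$; simplifying $(1/\beta)_j/(1/\beta+1)_j=1/(1+j\beta)$ produces
\[
q_\beta(z)=\frac{1}{\Gamma(-1/2)}\sum_{j=0}^{\infty}\frac{\Gamma(-1/2+j)}{j!\,(1+j\beta)}(-z)^j,
\]
whose value at $z=-1$ is precisely the series featured in the statement.

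To establish $p\prec q_\beta$, I would apply Lemma \ref{Lemma-3.4h-p132-Miller-Mocanu} with $\vartheta(\xi)=\xi$ and $\lambda(\xi)=\beta$, so that $h(z)=\sqrt{1+z}$ and $\Theta(z)=\beta zq_\beta'(z)$. The derivative formula \eqref{Derivative-Property-GHGF} recasts $\Theta$ as
\[
\Theta(z)\;=\;\frac{\beta}{2(1+\beta)}\,zF\!\left(\tfrac12,\tfrac1\beta+1;\tfrac1\beta+2;-z\right).
\]
Since $0<\tfrac12\le\tfrac1\beta+1\le\tfrac1\beta+2$ for $\beta>0$, Lemma \ref{Lemma-Kustner-2007-HGF-JMAA} shows that $zF(\tfrac12,\tfrac1\beta+1;\tfrac1\beta+2;z)$ is starlike in $\mathbb{D}$; the transformation $g(z)\mapsto-g(-z)$ preserves starlikeness, so the factor appearing in $\Theta$ is starlike as well. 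The remaining hypothesis (iii) then follows from the identity
\[
\frac{zh'(z)}{\Theta(z)}\;=\;\frac{1}{\beta}+\frac{z\Theta'(z)}{\Theta(z)},
\]
obtained by differentiating $h=q_\beta+\Theta$ and using $\Theta=\beta zq_\beta'$; the right side has real part exceeding $1/\beta>0$ because $\Theta$ is starlike. Lemma \ref{Lemma-3.4h-p132-Miller-Mocanu} then delivers $p\prec q_\beta$.

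It remains to establish $q_\beta\prec\varphi_{\scriptscriptstyle{Ne}}$ for $\beta\ge\beta_L$. As in \Cref{Thrm-LemB-Impl-Neph-SRN}\,(a), both $q_\beta(\mathbb{D})$ and $\varphi_{\scriptscriptstyle{Ne}}(\mathbb{D})$ are symmetric about the real axis (real Taylor coefficients), so the inclusion reduces to the real-axis conditions $q_\beta(-1)\ge 1/3$ and $q_\beta(1)\le 5/3$. Since $(-\tfrac12)_j<0$ for each $j\ge 1$, the series $q_\beta(-1)=1+\sum_{j\ge 1}(-\tfrac12)_j/(j!(1+j\beta))$ has all terms beyond the first negative and each $1/(1+j\beta)$ is strictly decreasing in $\beta$; thus $q_\beta(-1)$ is a strictly increasing function of $\beta$ rising from $0$ at $\beta\to 0^+$ to $1$ at $\beta\to\infty$. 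A unique $\beta_L$ therefore solves $q_{\beta_L}(-1)=1/3$, which is exactly the equation named in the statement. The companion inequality $q_\beta(1)<5/3$ holds automatically: the series for $q_\beta(1)$ is alternating with terms of decreasing absolute value, giving $q_\beta(1)\le 1+1/(2(1+\beta))\le 3/2<5/3$. Sharpness is immediate because the choice $p=q_{\beta_L}$ satisfies $p+\beta zp'=\sqrt{1+z}$ yet has $p(-1)=1/3=\varphi_{\scriptscriptstyle{Ne}}(-1)$, putting the value on the boundary of the nephroid.

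The chief obstacle is the starlikeness of $\Theta$: unlike the earlier theorems, the non-constant choice $\vartheta(\xi)=\xi$ makes $\Theta$ an honest hypergeometric expression rather than $h-1$, and K\"ustner's order-of-starlikeness bound (Lemma \ref{Lemma-Kustner-2007-HGF-JMAA}) is the essential input that lets condition (iii) of the Miller--Mocanu lemma fall out cleanly from the identity $zh'/\Theta=1/\beta+z\Theta'/\Theta$. A secondary concern is the justification of the real-axis reduction for the final subordination, which relies on the symmetry of both dominant and target together with the specific geometry of the nephroid already exploited in \Cref{Thrm-LemB-Impl-Neph-SRN}.
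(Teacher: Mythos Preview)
Your proposal is correct and follows essentially the same route as the paper: solve $q_\beta+\beta zq_\beta'=\sqrt{1+z}$ to obtain $q_\beta(z)=F(-\tfrac12,\tfrac1\beta;\tfrac1\beta+1;-z)$, verify the starlikeness of $\Theta(z)=\beta zq_\beta'(z)$ via K\"ustner's bound, apply Lemma~\ref{Lemma-3.4h-p132-Miller-Mocanu} with $\vartheta(\xi)=\xi$, $\lambda(\xi)=\beta$, and then reduce $q_\beta\prec\varphi_{\scriptscriptstyle{Ne}}$ to the real-axis inequalities $q_\beta(-1)\ge 1/3$ and $q_\beta(1)\le 5/3$. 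Your write-up is in fact a bit tighter than the paper's in two places: you make explicit the reflection $g(z)\mapsto -g(-z)$ that transfers K\"ustner's starlikeness conclusion from $zF(a,b;c;z)$ to $zF(a,b;c;-z)$, and you dispose of the condition $q_\beta(1)\le 5/3$ by an alternating-series estimate ($q_\beta(1)\le 1+\tfrac{1}{2(1+\beta)}\le\tfrac32$) rather than by inspecting a plot of $\delta(\beta)$.
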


\begin{proof}
	An elementary analysis shows that the analytic function
	\begin{align}\label{Def-qbeta-Integral-Form-Lem}
q_\beta(z)=\frac{1}{\beta}\int_0^1\frac{t^{\frac{1}{\beta}-1}}{(1+zt)^{-1/2}}\,dt
	\end{align}
	is a solution of the linear differential equation $q_\beta(z)+\beta zq'_\beta(z)=\varphi_{\scriptscriptstyle{L}}(z)$. In view of the representation \eqref{Integral-Rep-Gaussian-HGF} of the Gaussian hypergeometric function, it is easy to see that the function $q_\beta(z)$ given by \eqref{Def-qbeta-Integral-Form-Lem} has the form
	\begin{align*}
	q_\beta(z)=F\left(-\frac{1}{2},\frac{1}{\beta};\frac{1}{\beta}+1;-z\right).
	\end{align*}
	For $\xi\in\mathbb{C}$, define $\vartheta(\xi)=\xi$ and $\lambda(\xi)=\beta$ so that
	\begin{align*}
	\Theta(z)=zq'_\beta(z)\lambda(q_\beta(z))=\beta zq'_\beta(z)=\beta zF'\left(-\frac{1}{2},\frac{1}{\beta};\frac{1}{\beta}+1;-z\right).
	\end{align*}
	This on using the identity \eqref{Derivative-Property-GHGF} gives
	  \begin{align}\label{Def-qbeta-GHGF-Form-Lem}
	  \Theta(z)=\frac{\beta}{2(1+\beta)} zF\left(\frac{1}{2},\frac{1}{\beta}+1;\frac{1}{\beta}+2;-z\right).
	  \end{align}
	  We now claim that the function $\Theta(z)$ given by \eqref{Def-qbeta-GHGF-Form-Lem} is starlike in $\mathbb{D}$ by showing that $\sigma(\Theta)\geq0$. 
For the normalized hypergeometric function on the right side of \eqref{Def-qbeta-GHGF-Form-Lem}, we have $a=1/2,b=1/\beta+1$ and $c=1/\beta+2$, so that the condition $0<a\leq b\leq c$ easily holds. Therefore, by \Cref{Lemma-Kustner-2007-HGF-JMAA}, it follows that
\begin{align*}
\sigma\left(zF(a,b;c;z)\right)
\geq 1-\frac{ab}{b+c}
=1-\frac{1+\beta}{2\left(2+3\beta\right)}
=\frac{3+5\beta}{2\left(2+3\beta\right)}
>0 
\qquad (\because\beta>0).	                                
\end{align*}
This shows that the hypergeometric function $zF\left(\frac{1}{2},\frac{1}{\beta}+1;\frac{1}{\beta}+2;-z\right)$ is starlike in $\mathbb{D}$ and the starlikeness of $\Theta(z)$ given by \eqref{Def-qbeta-GHGF-Form-Lem} follows.	  
  Furthermore, the function $h(z)=\vartheta\left(q_\beta(z)\right)+\Theta(z)=q_\beta(z)+\Theta(z)$ satisfies
  \begin{align*}
  \mathrm{Re}\left(\frac{zh'(z)}{\Theta(z)}\right)= \mathrm{Re}\left(\frac{1}{\beta}+\frac{z\Theta'(z)}{\Theta(z)}\right)>0,
  \end{align*}
  as $\Theta$ is starlike and $\beta>0$. In light of \Cref{Lemma-3.4h-p132-Miller-Mocanu}, we conclude that the subordination $p+\beta zp'\prec q_\beta+\beta zq'_\beta=\varphi_{\scriptscriptstyle{L}}$ implies $p\prec q_\beta$. The desired subordination $p\prec\varphi_{\scriptscriptstyle{Ne}}$ will now hold true if $q_\beta\prec\varphi_{\scriptscriptstyle{Ne}}$. As earlier, the subordination $q_\beta\prec\varphi_{\scriptscriptstyle{Ne}}$ holds if, and only if,
  \begin{align*}
  \varphi_{\scriptscriptstyle {Ne}}(-1)<q_\beta(-1)<q_\beta(1)<\varphi_{\scriptscriptstyle {Ne}}(1).
  \end{align*}
  On using the representation \eqref{Def-qbeta-GHGF-Form-Lem} and the identity \eqref{Gamma-Function-Rep-Gaussian-HGF}, the above condition yields
  \begin{align*}
  \frac{1}{3}\leq F\left(-\frac{1}{2},\frac{1}{\beta};\frac{1}{\beta}+1;1\right)=\frac{1}{\Gamma(-\frac{1}{2})}\sum_{j=0}^\infty\frac{\Gamma(-\frac{1}{2}+j)}{j!\,(1+j\beta)}
  \end{align*}
  and
  \begin{align*}
  \frac{5}{3} \geq F\left(-\frac{1}{2},\frac{1}{\beta};\frac{1}{\beta}+1;-1\right)=\frac{1}{\Gamma(-\frac{1}{2})}\sum_{j=0}^\infty\frac{\Gamma(-\frac{1}{2}+j)}{j!\,(1+j\beta)}(-1)^j.
  \end{align*}
  Or, equivalently,
  \begin{align*}
 \tau(\beta):= \frac{1}{\Gamma(-\frac{1}{2})}\sum_{j=0}^\infty\frac{\Gamma(-\frac{1}{2}+j)}{j!\,(1+j\beta)}-\frac{1}{3} \geq 0
  \end{align*}
  and
  \begin{align*}
  \delta(\beta):=\frac{5}{3} - \frac{1}{\Gamma(-\frac{1}{2})}\sum_{j=0}^\infty\frac{\Gamma(-\frac{1}{2}+j)}{j!\,(1+j\beta)}(-1)^j \geq 0.
  \end{align*}
We note that for $\beta\in(0,\infty)$, 
\begin{align*}
\delta(\beta)\in\left(\frac{5}{3}-\sqrt{2},\;\frac{2}{3}\right)
\quad \text{ and } \quad
\tau(\beta)\in\left(-\frac{1}{3},\frac{2}{3}\right).
\end{align*}
That is, as $\beta$ varies from $0$ to $\infty$, $\delta(\beta)$ is positive, while $\tau(\beta)$ takes positive as well as negative values. See the plots of $\tau(\beta)$ and $\delta(\beta)$ in \Cref{Plots-TauDel-HGF-SRN}. Further, $\tau(\beta)$ is strictly increasing in $(0,\infty)$. Therefore, both of the above required conditions hold true for $\beta\geq\beta_L$, where $\beta_L$ is the unique root of $\tau(\beta)$. This completes the proof.
\end{proof}
  \begin{figure}[H]
   \begin{subfigure}{0.45\textwidth}
   	\centering
     \includegraphics[scale=1.0]{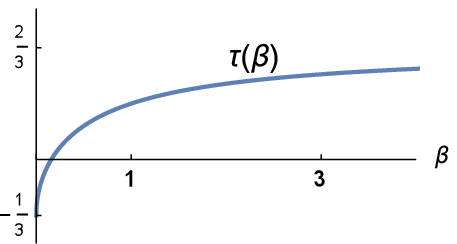}
  \end{subfigure}
  \begin{subfigure}{0.45\textwidth}
  	\centering
     \includegraphics[scale=1.0]{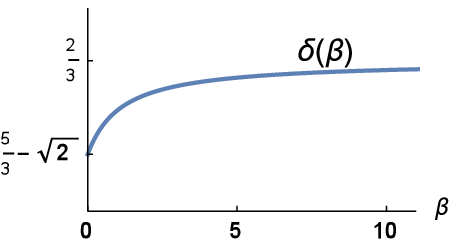}
   \end{subfigure}
   \caption{Plots of $\tau(\beta)$ and $\delta(\beta)$, $\beta>0$.}
   \label{Plots-TauDel-HGF-SRN}
  \end{figure}
The following result is a direct application of \Cref{Thrm-LemB-Impl-Neph-GHGF} obtained by setting $p(z)=zf'(z)/f(z)$.

\begin{corollary}
	Let $\mathcal{G}(z)$ be defined as in \eqref{Definition-J}, and let $f\in\mathcal{A}$ satisfies
	\begin{align*}
	\left(1+\beta\,\mathcal{G}(z)\right)\frac{zf'(z)}{f(z)}\prec \varphi_{\scriptscriptstyle{L}}(z).
	\end{align*}
	Then $f\in\mathcal{S}^*_{Ne}$ for $\beta\geq\beta_L$.
\end{corollary}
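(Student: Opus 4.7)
The plan is to deduce the corollary immediately from Theorem \ref{Thrm-LemB-Impl-Neph-GHGF} by the standard substitution $p(z) := zf'(z)/f(z)$. Since $f \in \mathcal{A}$, the quotient $f(z)/z$ is analytic and nonvanishing at the origin, so $p$ is analytic in a neighbourhood of $0$ with $p(0) = 1$; the analyticity of $p$ throughout $\mathbb{D}$ is in any case implicit in the hypothesis, whose left-hand side requires $zf'(z)/f(z)$ to be analytic in $\mathbb{D}$.

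Next I would rewrite the hypothesis in the form required by the theorem. Taking the logarithmic derivative of $p$ yields
\[
\frac{zp'(z)}{p(z)} \;=\; 1 + \frac{zf''(z)}{f'(z)} - \frac{zf'(z)}{f(z)} \;=\; \mathcal{G}(z),
\]
so $zp'(z) = p(z)\,\mathcal{G}(z)$, and therefore
\[
p(z) + \beta zp'(z) \;=\; p(z)\bigl(1 + \beta\mathcal{G}(z)\bigr) \;=\; \bigl(1 + \beta\mathcal{G}(z)\bigr)\frac{zf'(z)}{f(z)}.
\]
Hence the standing hypothesis of the corollary is exactly $p(z) + \beta zp'(z) \prec \varphi_{\scriptscriptstyle{L}}(z)$.

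Applying Theorem \ref{Thrm-LemB-Impl-Neph-GHGF} to this $p$ with $\beta \geq \beta_L$ then gives $p \prec \varphi_{\scriptscriptstyle{Ne}}$, which by the definition of the class $\mathcal{S}^*_{Ne}$ means precisely $f \in \mathcal{S}^*_{Ne}$. There is no genuine obstacle here: the only non-trivial content is the logarithmic-derivative identity $zp'/p = \mathcal{G}$, and all of the hypergeometric and subordination analysis has already been carried out in the preceding theorem. The corollary is in essence a notational translation from subordinations in $p$ to conditions on $f$.
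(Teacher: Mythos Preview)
Your argument is correct and is exactly the approach the paper takes: it states that the corollary is a direct application of Theorem~\ref{Thrm-LemB-Impl-Neph-GHGF} obtained by setting $p(z)=zf'(z)/f(z)$, without spelling out the logarithmic-derivative computation you have supplied.
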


\begin{theorem}
	Let
	$p(z)+\beta zp'(z)\prec 1+z$, where $p\in\mathcal{H}$ satisfies $p(0)=1$ and $\beta>0$.
	Then $p\prec\varphi_{\scriptscriptstyle{Ne}}$ for $\beta\geq1/2$.
	The estimate on $\beta$ is sharp.
\end{theorem}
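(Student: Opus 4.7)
The plan is to follow the same scheme as in Theorem~\ref{Thrm-LemB-Impl-Neph-GHGF}. First I would solve the first-order linear differential equation
\begin{align*}
q_\beta(z)+\beta zq_\beta'(z)=1+z, \qquad q_\beta(0)=1.
\end{align*}
This is elementary: multiplying by the integrating factor $z^{1/\beta-1}/\beta$ and integrating produces the unique analytic solution $q_\beta(z)=1+\dfrac{z}{1+\beta}$, which is defined on all of $\overline{\mathbb{D}}$ and univalent.

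Next I would apply Lemma~\ref{Lemma-3.4h-p132-Miller-Mocanu} with the choices $\vartheta(\xi)=\xi$ and $\lambda(\xi)=\beta$. Then
\begin{align*}
\Theta(z)=zq_\beta'(z)\lambda(q_\beta(z))=\frac{\beta z}{1+\beta}, \qquad h(z)=\vartheta(q_\beta(z))+\Theta(z)=1+z.
\end{align*}
Clearly $\Theta$ is starlike (it is a nonzero multiple of $z$) and $h(z)=1+z$ is convex. Moreover,
\begin{align*}
\mathrm{Re}\left(\frac{zh'(z)}{\Theta(z)}\right)=\mathrm{Re}\left(\frac{1+\beta}{\beta}\right)=\frac{1+\beta}{\beta}>0.
\end{align*}
The hypotheses of Lemma~\ref{Lemma-3.4h-p132-Miller-Mocanu} are thus satisfied, and the subordination $p(z)+\beta zp'(z)\prec 1+z=q_\beta(z)+\beta zq_\beta'(z)$ yields $p\prec q_\beta$.

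The remaining task is to prove the chain subordination $q_\beta\prec\varphi_{\scriptscriptstyle{Ne}}$ under the sharp condition $\beta\geq 1/2$. Observe that $q_\beta(\mathbb{D})$ is the open disc centered at $1$ with radius $r_\beta:=1/(1+\beta)$; so $q_\beta\prec\varphi_{\scriptscriptstyle{Ne}}$ amounts to showing this disc lies in $\varphi_{\scriptscriptstyle{Ne}}(\mathbb{D})$. The main obstacle, and the only nontrivial geometric step, is to verify that the largest disc centered at $1$ that fits inside the nephroid has radius $2/3$. To see this, parametrize the disc boundary as $w=1+r e^{i\theta}$ and substitute into the nephroid equation \eqref{Equation-of-Nephroid}:
\begin{align*}
\left((u-1)^2+v^2-\tfrac{4}{9}\right)^3-\tfrac{4v^2}{3}=\left(r^2-\tfrac{4}{9}\right)^3-\tfrac{4r^2\sin^2\theta}{3}.
\end{align*}
For $r=2/3$ this equals $-\tfrac{16\sin^2\theta}{27}\leq 0$, which means the circle $|w-1|=2/3$ lies inside (or on the boundary of) the nephroid, touching the cusps $\{1/3,5/3\}$ at $\theta\in\{0,\pi\}$; the sign convention is consistent with the fact that a test point such as $w=2$ gives a positive value. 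Therefore $r_\beta\leq 2/3$, i.e. $\beta\geq 1/2$, is sufficient for the disc $q_\beta(\mathbb{D})$ to be contained in $\varphi_{\scriptscriptstyle{Ne}}(\mathbb{D})$. Since $\varphi_{\scriptscriptstyle{Ne}}$ is univalent with $q_\beta(0)=\varphi_{\scriptscriptstyle{Ne}}(0)=1$, this yields $q_\beta\prec\varphi_{\scriptscriptstyle{Ne}}$ and hence $p\prec\varphi_{\scriptscriptstyle{Ne}}$.

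For sharpness, at $\beta=1/2$ the solution gives $q_\beta(-1)=1-2/3=1/3=\varphi_{\scriptscriptstyle{Ne}}(-1)$, showing that the estimate cannot be lowered: any $\beta<1/2$ would force $q_\beta(-1)<1/3$, violating the necessary condition $\varphi_{\scriptscriptstyle{Ne}}(-1)<q_\beta(-1)$ for $q_\beta\prec\varphi_{\scriptscriptstyle{Ne}}$.
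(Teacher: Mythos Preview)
Your proof is correct and follows the same scheme as the paper: solve $q_\beta+\beta zq_\beta'=1+z$, apply Lemma~\ref{Lemma-3.4h-p132-Miller-Mocanu} with $\vartheta(\xi)=\xi$, $\lambda(\xi)=\beta$, and then verify $q_\beta\prec\varphi_{\scriptscriptstyle{Ne}}$ for $\beta\ge 1/2$. The only cosmetic differences are that the paper writes $q_\beta$ in hypergeometric form $F\big(-1,\tfrac1\beta;\tfrac1\beta+1;-z\big)$ (which of course reduces to your $1+\frac{z}{1+\beta}$) and checks the containment via the real-axis inequalities $1/3\le q_\beta(-1)$, $q_\beta(1)\le 5/3$, whereas you argue the disc inclusion directly from the nephroid equation~\eqref{Equation-of-Nephroid}; your version is in fact a bit more self-contained on that point.
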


\begin{proof}
	Consider the differential equation $q_\beta(z)+\beta zq'_\beta(z)=1+z$ with the analytic function $q_\beta(z)$ given by
	\begin{align*}
	q_\beta(z)=\frac{1}{\beta}\int_0^1 t^{\frac{1}{\beta}-1}(1+zt)\,dt=F\left(-1,\frac{1}{\beta};\frac{1}{\beta}+1;-z\right),\quad z\in\mathbb{D}.
	\end{align*}
	as its solution. Defining the functions $\vartheta$ and $\lambda$ as in \Cref{Thrm-LemB-Impl-Neph-GHGF} we obtain 
	\begin{align*}
	\Theta(z)=zq'_\beta(z)\lambda(q_\beta(z))=\beta zq'_\beta(z)
	=\frac{\beta}{1+\beta} zF\left(0,\frac{1}{\beta}+1;\frac{1}{\beta}+2;-z\right)
	=\frac{\beta}{1+\beta} z,
	\end{align*}
	which is clearly a starlike function in $\mathbb{D}$. Also, $h(z)=\vartheta\left(q_\beta(z)\right)+\Theta(z)=q_\beta(z)+\Theta(z)$ satisfies
	$\mathrm{Re}\left({zh'}/{\Theta}\right)>0$ in $\mathbb{D}$. Therefore, it follows from \Cref{Lemma-3.4h-p132-Miller-Mocanu} that $p+\beta zp'\prec 1+z=q_\beta+\beta zq'_\beta$ implies the subordination $p\prec {q_\beta}$. To get the subordination $p\prec\varphi_{\scriptscriptstyle{Ne}}$, it now remains to prove that $q_\beta\prec\varphi_{\scriptscriptstyle{Ne}}$, which holds true if, and only if,
	$\varphi_{\scriptscriptstyle {Ne}}(-1)<q_\beta(-1)<q_\beta(1)<\varphi_{\scriptscriptstyle {Ne}}(1)$. This condition is equivalent to the conditions
	\begin{align*}
	 F\left(-1,\frac{1}{\beta};\frac{1}{\beta}+1;1\right)-\frac{1}{3}\geq0 \;
\text{ implying } \beta\geq \frac{1}{2}
	\end{align*}
	and
	\begin{align*}
	\frac{5}{3} - F\left(-1,\frac{1}{\beta};\frac{1}{\beta}+1;-1\right)\geq0 \; \text{ implying }	
\beta\geq -\frac{5}{2}
	\end{align*}
Thus $p\prec\varphi_{\scriptscriptstyle{Ne}}$ if $\beta\geq\max\{1/2,-5/2\}=1/2$. Since at $\beta=1/2$, $q_\beta(-1)=1/3$ as well as $q_\beta(1)=5/3$. This proves that the bound on $\beta$ can not be decreased further.
\end{proof}
\begin{remark}
	If the function $f\in\mathcal{A}$ satisfies the subordination
\begin{align*}
\left(1+\beta\,\mathcal{G}(z)\right)\frac{zf'(z)}{f(z)}\prec 1+z,
\end{align*}
then $f\in\mathcal{S}^*_{Ne}$ whenever $\beta\geq1/2$.
\end{remark}



\begin{thebibliography}{N2}
\bibitem{Ahuja-Ravi-2018-App-Diff-Sub-Stud-Babe-Bolyai}
     O. P. Ahuja, S. Kumar\ and\ V. Ravichandran, Applications of first order differential subordination for functions with positive real part, Stud. Univ. Babe\c{s}-Bolyai Math. {\bf 63} (2018), no.~3, 303--311.

\bibitem{Ali-Ravi-2007-Janowski-Starlikeness-IJMMS}
     R. M. Ali, V. Ravichandran\ and\ N. Seenivasagan, Sufficient conditions for Janowski starlikeness, Int. J. Math. Math. Sci. {\bf 2007}, Art. ID 62925, 7 pp.

\bibitem{Ali-Ravi-2012-Diff-Sub-LoB-TaiwanJM}
     R. M. Ali, N. E. Cho, V. Ravichandran and S. S. Kumar, Differential subordination for functions associated with the lemniscate of Bernoulli, Taiwanese J. Math. {\bf 16} (2012), no.~3, 1017--1026.

\bibitem{Bohra-Ravi-2019-Diff-Subord-Hacet.J}
N. Bohra, S. Kumar\ and\ V. Ravichandran, Some special differential subordinations, Hacet. J. Math. Stat. {\bf 48} (2019), no.~4, 1017--1034.

\bibitem{Bulboaca-2005-Diff-Sub-Book}
T. Bulboac\v{a}, {\it Differential Subordinations and Superordinations}, Recent Results, House of Scientific Book Publ., Cluj-Napoca, 2005.

\bibitem{Cho-Ravi-2018-Diff-Sub-Booth-Lem-TJM}
     N. E. Cho, S. Kumar, V. Kumar\ and\ V. Ravichandran, Differential subordination and radius estimates for starlike functions associated with the Booth lemniscate, Turkish J. Math. {\bf 42} (2018), no.~3, 1380--1399.

\bibitem{Cho-2019-Sine-BIMS}
      N. E. Cho, V. Kumar, S. S. Kumar\ and\ V. Ravichandran, Radius problems for starlike functions associated with the sine function, Bull. Iranian Math. Soc. {\bf 45} (2019), no.~1, 213--232.

\bibitem{Gandhi-Ravi-2017-Lune}
     S. Gandhi\ and\ V. Ravichandran, Starlike functions associated with a lune, Asian-Eur. J. Math. {\bf 10} (2017), no.~4, 1750064, 12 pp.


\bibitem{Goel-Siva-2019-Sigmoid-BMMS}
    P. Goel\ and\ S. Sivaprasad Kumar, Certain class of starlike functions associated with modified sigmoid function, Bull. Malays. Math. Sci. Soc. {\bf 43} (2020), no.~1, 957--991.


\bibitem{Janowski-1973-class-some-extremal}
  W. Janowski, Some extremal problems for certain families of analytic functions. I, Ann. Polon. Math. {\bf 28} (1973), 297--326.


\bibitem{Kumar-Ravi-2016-Starlike-Associated-Rational-Function}
  S. Kumar\ and\ V. Ravichandran, A subclass of starlike functions associated with a rational function, Southeast Asian Bull. Math. {\bf 40} (2016), no.~2, 199--212.

\bibitem{Kumar-Ravi-2013-Suff-Conditions-LoB-JIA}
   S. S. Kumar, V. Kumar, V. Ravichandran\ and\ N. E. Cho, Sufficient conditions for starlike functions associated with the lemniscate of Bernoulli, J. Inequal. Appl. {\bf 2013}, 2013:176, 13 pp.

\bibitem{SushilKumar-Ravi-2018-Sub-Positive-RP-CAOT}
   S. Kumar\ and\ V. Ravichandran, Subordinations for functions with positive real part, Complex Anal. Oper. Theory {\bf 12} (2018), no.~5, 1179--1191.

\bibitem{Kustner-2002-HGF-CMFT}     
    R. K\"{u}stner, Mapping properties of hypergeometric functions and convolutions of starlike or convex functions of order $\alpha$, Comput. Methods Funct. Theory {\bf 2} (2002), no.~2, [On table of contents: 2004], 597--610. 

\bibitem{Kustner-2007-HGF-JMAA}
   R. K\"{u}stner, On the order of starlikeness of the shifted Gauss hypergeometric function, J. Math. Anal. Appl. {\bf 334} (2007), no.~2, 1363--1385. 

\bibitem{Lockwood-Book-of-Curves-2007}
   E. H. Lockwood, {\it A book of curves}, paperback re-issue of the 1963 edition, Cambridge University Press, Cambridge, 2007.

\bibitem{Ma-Minda-1992-A-unified-treatment}
  W. C. Ma\ and\ D. Minda, A unified treatment of some special classes of univalent functions, in {\it Proceedings of the Conference on Complex Analysis} (Tianjin, 1992), 157--169, Conf. Proc. Lecture Notes Anal., I, Int. Press, Cambridge.

\bibitem{Mendiratta-2014-Shifted-Lemn-Bernoulli}
  R. Mendiratta, S. Nagpal\ and\ V. Ravichandran, A subclass of starlike functions associated with left-half of the lemniscate of Bernoulli, Internat. J. Math. {\bf 25} (2014), no.~9, 1450090, 17 pp.

\bibitem{Mendiratta-Ravi-2015-Expo-BMMS}
   R. Mendiratta, S. Nagpal\ and\ V. Ravichandran, On a subclass of strongly starlike functions associated with exponential function, Bull. Malays. Math. Sci. Soc. {\bf 38} (2015), no.~1, 365--386.

\bibitem{Miller-Mocanu-Book-2000-Diff-Sub}
   S. S. Miller\ and\ P. T. Mocanu, {\it Differential subordinations}, Monographs and Textbooks in Pure and Applied Mathematics, 225, Marcel Dekker, Inc., New York, 2000.

\bibitem{Nunokawa-Owa-1989-One-criterion-PAMS}
   M. Nunokawa, M. Obradovi\'{c}\ and\ S. Owa, One criterion for univalency, Proc. Amer. Math. Soc. {\bf 106} (1989), no.~4, 1035--1037.

\bibitem{Omar-Halim-2013-Diff-Sub-Sokol-Stankiewicz-Class}
   R. Omar\ and\ S. A. Halim, Differential subordination properties of Sok\'{o}\l -Stankiewicz starlike functions, Kyungpook Math. J. {\bf 53} (2013), no.~3, 459--465.

\bibitem{Raina-Sokol-2015-Crescent-Shaped-I}
   R. K. Raina\ and\ J. Sok\'{o}\l, Some properties related to a certain class of starlike functions, C. R. Math. Acad. Sci. Paris {\bf 353} (2015), no.~11, 973--978.

\bibitem{Rainville-Special-Functions-Book}
    E. D. Rainville, {\it Special functions}, The Macmillan Co., New York, 1960.
\bibitem{Sharma-Ravi-2016-Cardioid}
  K. Sharma, N. K. Jain\ and\ V. Ravichandran, Starlike functions associated with a cardioid, Afr. Mat. {\bf 27} (2016), no.~5-6, 923--939.


\bibitem{Sokol-J.Stankwz-1996-Lem-of-Ber}
  J. Sok\'{o}\l\ and\ J. Stankiewicz, Radius of convexity of some subclasses of strongly starlike functions, Zeszyty Nauk. Politech. Rzeszowskiej Mat. No. 19 (1996), 101--105.

\bibitem{Sokol-2007-Subord-AMC}
  J. Sok\'{o}\l, On sufficient condition to be in a certain subclass of starlike functions defined by subordination, Appl. Math. Comput. {\bf 190} (2007), no.~1, 237--241.

\bibitem{Wani-Swami-Nephroid-Basic}
   L. A. Wani\ and\ A. Swaminathan, Starlike and convex functions associated with a nephroid domain, Bull. Malays. Math. Sci. Soc. (2020). https://doi.org/10.1007/s40840-020-00935-6

\bibitem{Wani-Swami-Radius-Problems-Nephroid-RACSAM}
   L. A. Wani\ and\ A. Swaminathan, Radius problems for functions associated with a nephroid domain. RACSAM 114, 178 (2020). https://doi.org/10.1007/s13398-020-00913-4

\bibitem{Yates-1947-Handbook-Curves}
    R. C. Yates, {\it A Handbook on Curves and Their Properties}, J. W. Edwards, Ann Arbor, MI, 1947.
\end{thebibliography}
\end{document}